\theoremstyle{plain} % Italic font
\newtheorem{theorem}{Theorem}[section]
\newtheorem{corollary}[theorem]{Corollary}
\newtheorem{proposition}[theorem]{Proposition}
\newtheorem{lemma}[theorem]{Lemma}
\theoremstyle{definition} % Change the Roman font
\newtheorem{remark}[theorem]{Remark}
\newtheorem{example}[theorem]{Example}
\DeclareMathOperator{\tr}{tr}
\newcommand{\R}{\mathbb{R}}
\newcommand{\Sp}{\mathbb{S}}
\numberwithin{equation}{section}
\renewcommand{\vec}[1]{\bm{#1}}
\newcommand{\MisOriAngle}{\theta}
\title{Large time asymptotic behavior of grain boundaries motion with
dynamic lattice misorientations and with triple junctions drag}
\author{Yekaterina Epshteyn}
\address[Yekaterina Epshteyn]%
{Department of Mathematics,
The University of Utah,
Salt Lake City, UT 84112, USA}
\email{epshteyn@math.utah.edu}
\author{Chun Liu}
\address[Chun Liu]%
{Department of Applied Mathematics, Illinois Institute of Technology.
Chicago, IL 60616, USA}
\email{cliu124@iit.edu}
\author{Masashi Mizuno}
\address[Masashi Mizuno]%
{Department of Mathematics, College of Science
and Technology, Nihon University, Tokyo 101-8308 JAPAN}
\email{mizuno@math.cst.nihon-u.ac.jp}
\keywords{Grain growth, grain boundary network, texture development,
  lattice misorientation, triple junction drag, energetic variational
  approach, geometric evolution equations, large time asymptotics}
\subjclass[2000]{74N15, 35R37, 53C44, 49Q20}
\begin{document}

%
% temporary setting: making line number. the follwing command should be
% deleted for final version
%

%\linenumbers

%
%
%

\begin{abstract}
Many technologically useful materials are polycrystals
composed of a myriad of small monocrystalline grains
separated by grain boundaries. Dynamics of grain boundaries play an
essential role in defining the materials properties across multiple
scales. In this work, we study the large time
asymptotic behavior of the model for the motion of grain
boundaries with the dynamic lattice misorientations and the
triple junctions drag.
%, proposed in \cite{Katya-Chun-Mzn}.
\end{abstract}

\maketitle

\section{Introduction}

\par  Many technologically useful materials are polycrystals composed of a myriad of small monocrystalline grains
separated by grain boundaries.  Dynamics of grain boundaries play an
essential role in defining the materials properties across multiple
scales. Experimental and computational studies give useful insight 
into the geometric features and the crystallography of the grain
boundary network in polycrystalline
microstructures.
\par The focus of this work is on the large time evolution of a planar grain boundary network. A
classical model for the motion of  grain
boundaries in polycrystalline materials is the growth by curvature, as the local
evolution law for the grain boundaries,  due to Mullins and Herring~\cite{doi:10.1007-978-3-642-59938-5_2,
doi:10.1063-1.1722511,doi:10.1063-1.1722742}.  In
addition, to have a well-posed model for the evolution of the grain
boundary network, one has to impose a separate condition at the triple
junctions where three grain boundaries meet \cite{MR1833000}.
%For grain boundaries, there are points called triple junctions at where thethree grain boundaries meet.
A conventional choice is the
Herring condition which is the natural boundary condition at the
triple points for the
grain boundary network at equilibrium, \cite{MR0485012,MR1240580,MR1833000,MR3612327}, and reference
therein. There are several analytical studies
about grain boundary motion by mean curvature with the Herring
condition at the triple junctions, see for instance
\cite{MR1833000,MR3495423,MR2076269,MR3565976,arXiv:1611.08254,MR2075985,
  DK:BEEEKT,DK:gbphysrev, MR2772123, MR3729587, BobKohn,
  barmak_grain_2013, MR3316603}, as well as computational work,
\cite{doi:10.1023-A:1008781611991,doi:10.1016-S1359-6454(01)00446-3, MR2772123, MR2748098, MR2573343,MR3787390,MR3729587}.
\par A standard assumption in the theory and simulations of the grain
growth is the evolution of the grain boundaries/interfaces themselves and not the
dynamics of the triple junctions. However, recent experimental work
indicates that the motion of the triple junctions together with the anisotropy of
the grain interfaces can have a significant effect on the resulting grain
growth \cite{barmak_grain_2013}, and see also a recent work on
dynamics of line
defects \cite{Zhang2017PhysRevLett, Zhang2018JMPS,Thomas2019PNAS}. 
The current work is a continuation of our previous work
\cite{Katya-Chun-Mzn}, where we proposed new model for the
evolution of planar grain boundaries, which takes into account dynamic
lattice misorientations (evolving anisotropy of grain boundaries) and
the mobility of the triple
junctions. The goal here is to analyze the large time
asymptotic behavior of the model proposed in \cite{Katya-Chun-Mzn}.
\par The paper is organized as follows. In
Sections~\ref{sec:1}-\ref{sec:3}, we discuss important details and
properties of the model for the grain boundary motion. In
Sections~\ref{sec:4}-\ref{sec:5}, we present the main
  results of this paper, the global existence
(Theorem \ref{thm:4.4}), and the large
time asymptotic behavior of the considered model under the assumption of a single triple junction (Theorem \ref{thm:5.1}). In
Section~\ref{sec:6}, we discuss the extension of the theory to the grain boundary network with multiple
junctions, under the assumption of no
  critical/disappearance events and around the energy minimizing state. Finally, in Section~\ref{sec:7}, we present several numerical
experiments to illustrate the effect of the dynamic
  orientations/misorientations (grains ``rotations'') and the effect of the
  mobility of the triple junctions on the grain growth.

%This article is focused on the analysis of large time
%asymptotics of the model of the grain boundary motion, proposed in
%\cite{Katya-Chun-Mzn}. The pioneering work on the mathematical
%analysis of the evolution of grain boundaries are due to
%Mullins~\cite{doi:10.1063-1.1722511,doi:10.1063-1.1722742} and
%Herring~\cite{doi:10.1007-978-3-642-59938-5_2}. General references
%about mathematical study of the evolution equation for curves
%and mean curvature flow are, for example,  in \cite{MR2024995,MR2815949}.

%In contrast the curvature
%effects on the evolution of the grain boundaries, these misorientation
%effects are long-range interactions for the grain boundaries and they
%are not well-studied. 
\section{Review of the Model}\label{sec:1}
In this article we consider the large time
asymptotic behavior of the model for the evolution of the planar grain
boundary network with the dynamic lattice misorientations and the
triple junctions drag. Thus, in this section for the reader's convenience,
we
first review the model which was originally
proposed in \cite{Katya-Chun-Mzn}, and then,  briefly preview main
results of the current work.

 Let us first recall the system for a single triple junction which was
 derived in
 \cite{Katya-Chun-Mzn}. The total grain boundary energy for such model is
 \begin{equation}
  \label{eq:2.3}
   \sum_{j=1}^3
   \sigma(\Delta^{(j)}\alpha)|\Gamma_t^{(j)}|.
 \end{equation}
 Here, $\sigma:\R\rightarrow\R$ is a given surface tension,
 $\alpha^{(j)}=\alpha^{(j)}(t):[0,\infty)\rightarrow\R$ is
 time-dependent orientations of the grains,
 $\theta=\Delta^{(j)}\alpha:=\alpha^{(j-1)}-\alpha^{(j)}$ is a lattice
 misorientation of the grain boundary $\Gamma^{(j)}_t$, and
 $|\Gamma_t^{(j)}|$ is the length of $\Gamma_t^{(j)}$.  As a result of
 applying the maximal dissipation principle, in \cite{Katya-Chun-Mzn},
 one can obtain the following model,
\begin{equation}
 \label{eq:2.20}
  \left\{
  \begin{aligned}
   v_n^{(j)}
   &=
   \mu
   \sigma(\Delta^{(j)}\alpha)
   \kappa^{(j)},\quad\text{on}\ \Gamma_t^{(j)},\ t>0,\quad j=1,2,3, \\
   \frac{d\alpha^{(j)}}{dt}
   &=
   -\gamma
   \Bigl(
   \sigma_\MisOriAngle(\Delta^{(j+1)}\alpha)
   |\Gamma_t^{(j+1)}|
   -
   \sigma_\MisOriAngle(\Delta^{(j)}\alpha)
   |\Gamma_t^{(j)}|
   \Bigr)
   ,\quad
   j=1,2,3,
   \\
   \frac{d\vec{a}}{dt}(t)
   &=
   \eta\sum_{k=1}^3
   \sigma(\Delta^{(k)}\alpha)
   \frac{\vec{b}^{(k)}(0,t)}{|\vec{b}^{(k)}(0,t)|},
   \quad t>0, \\
   \Gamma_t^{(j)}
   &:
   \vec{\xi}^{(j)}(s,t),\quad
   0\leq s\leq 1,\quad
   t>0,\quad
   j=1,2,3, \\
   \vec{a}(t)
   &=
   \vec{\xi}^{(1)}(0,t)
   =
   \vec{\xi}^{(2)}(0,t)
   =
   \vec{\xi}^{(3)}(0,t),
   \quad
   \text{and}
   \quad
   \vec{\xi}^{(j)}(1,t)=\vec{x}^{(j)},\quad
   j=1,2,3.
  \end{aligned}
  \right.
\end{equation}
In \eqref{eq:2.20}, $v_n^{(j)}$, $\kappa^{(j)}$, and
$\vec{b}^{(j)}=\vec{\xi}_s^{(j)}$ denote a normal velocity, a curvature and
a tangent vector of the grain boundary $\Gamma_t^{(j)}$, respectively. Note that $s$
is not an arc length parameter of $\Gamma_t^{(j)}$, namely, $\vec{b}^{(j)}$
is \emph{not} necessarily a unit tangent
vector. The vector $\vec{a}=\vec{a}(t):[0,\infty)\rightarrow\R^2$ denotes a position of
the triple junction, $\vec{x}^{(j)}$ is a position of the end point of
the grain boundary. The three independent relaxation time scales
$\mu,\gamma,\eta>0$  (length, misorientation and position of the
triple junction) are considered as
positive constants. Further, we assume in \eqref{eq:2.20}, $\alpha^{(0)}=\alpha^{(3)}$,
$\alpha^{(4)}=\alpha^{(1)}$ and $\vec{b}^{(4)}=\vec{b}^{(1)}$, for
simplicity. We also use notation $|\cdot|$ for a standard
euclidean vector norm. The complete details about model
\eqref{eq:2.20} can be found in the earlier work
\cite[Section 2]{Katya-Chun-Mzn}.
%%% Continue Below Aug 12!!!
Next, in \cite{Katya-Chun-Mzn},  we relaxed the curvature effect,  by taking the limit
$\mu\rightarrow\infty$, and obtained the reduced model, 
\begin{equation}
 \label{eq:1.1}
 \left\{
  \begin{aligned}
   \frac{d\alpha^{(j)}}{dt}
   &=
   -
   \Bigl(
   \sigma_\MisOriAngle(\Delta^{(j+1)}\alpha)|\vec{b}^{(j+1)}|
   -
   \sigma_\MisOriAngle(\Delta^{(j)}\alpha)|\vec{b}^{(j)}|
   \Bigr)
   ,
   \quad
   j=1,2,3, \\
   \frac{d\vec{a}}{dt}(t)
   &=
   \eta
   \sum_{j=1}^3
   \sigma(\Delta^{(j)}\alpha)
   \frac{\vec{b}^{(j)}}{|\vec{b}^{(j)}|},
   \quad t>0, \\
   \vec{a}(t)+\vec{b}^{(j)}(t)
   &=
   \vec{x}^{(j)},
   \quad
   j=1,2,3.
  \end{aligned}
 \right.
\end{equation}
 In (\ref{eq:1.1}), we consider $\vec{b}^{(j)}(t)$ as a grain
 boundary. Note that, the system of equations \eqref{eq:1.1} can also be
 derived from the energetic variational principle for the total grain boundary
 energy (\ref{eq:2.3}) (with $|\Gamma_t^{(j)}|$ replaced by $|\vec{b}^{(j)}|$).
%\begin{equation}
 %\label{eq:1.2}
 %E
 %=
 % \sum_{j=1}^3
 % \textcolor{blue}{\sigma(\Delta^{(j)}\alpha)}
 %|\vec{b}^{(j)}|.
%\end{equation}
%
%

%We assume that the surface tension $\sigma$ is independent
%of the normal vector $\vec{n}$. 
Hereafter, we assume the following three conditions for the
surface tension $\sigma$. First, we assume $\sigma$ is $C^3$, positive
and is minimized at $0$, namely,
% positivity, namely, there exists a positive constant
%$\Cl{const:2.Assumption1}>0$ such that,
\begin{equation}
 \label{eq:2.Assumption1}
  \sigma(\MisOriAngle)\geq \sigma(0)>0,
\end{equation}
for $\MisOriAngle\in\R$. Second, we assume convexity, for all
$\MisOriAngle\in\R,$
\begin{equation}
 \label{eq:2.Assumption2}
  \sigma_\MisOriAngle(\MisOriAngle)\MisOriAngle\geq0,\qquad
  \text{and}
  \quad 
  \sigma_{\MisOriAngle\MisOriAngle}(0)>0.  
\end{equation}
Furthermore, we assume,
\begin{equation}
 \label{eq:2.Assumption3}
  \sigma_\MisOriAngle(\MisOriAngle)=0
  \ \text{if and only if}\
  \MisOriAngle=0.
\end{equation}

%\textcolor{blue}{We later
%review the derivation of the system \eqref{eq:1.1} in Section
%\ref{sec:2}.}

\begin{remark}
 Note, it is enough to assume
 $\sigma_{\MisOriAngle\MisOriAngle}(0)>0$ instead of
 \eqref{eq:2.Assumption2} if $\Delta^{(j)}\alpha$ are sufficiently
 small. Indeed, if $C^2$ function $\sigma$ satisfies
 $\sigma_{\MisOriAngle\MisOriAngle}(0)>0$ and
 $\sigma(\theta)\geq\sigma(0)$ for $\theta\in\R$, then
 $\sigma_\MisOriAngle(\MisOriAngle)\MisOriAngle\geq0$ is satisfied for
 sufficiently small $|\MisOriAngle|$. Note that, the convexity
 assumption \eqref{eq:2.Assumption2} was not used for the local
 existence result, see \cite{Katya-Chun-Mzn} (or see Proposition
 \ref{prop:3.3} below). Further, we can also show the maximum
 principle (Proposition \ref{prop:3.5}) without global convexity
 assumption $\sigma_\MisOriAngle(\MisOriAngle)\MisOriAngle\geq0$
 in \eqref{eq:2.Assumption2}, if the initial orientations
 $\alpha_0^{(j)}$ are sufficiently small. 
%Thus if the initial
 %orientation $\alpha_0^{(j)}$ is sufficient small, then the maximal
 %principle (Proposition \ref{prop:3.5}) can be deduced.
\end{remark}

 \begin{figure}
  \centering
  \includegraphics[height=6cm]{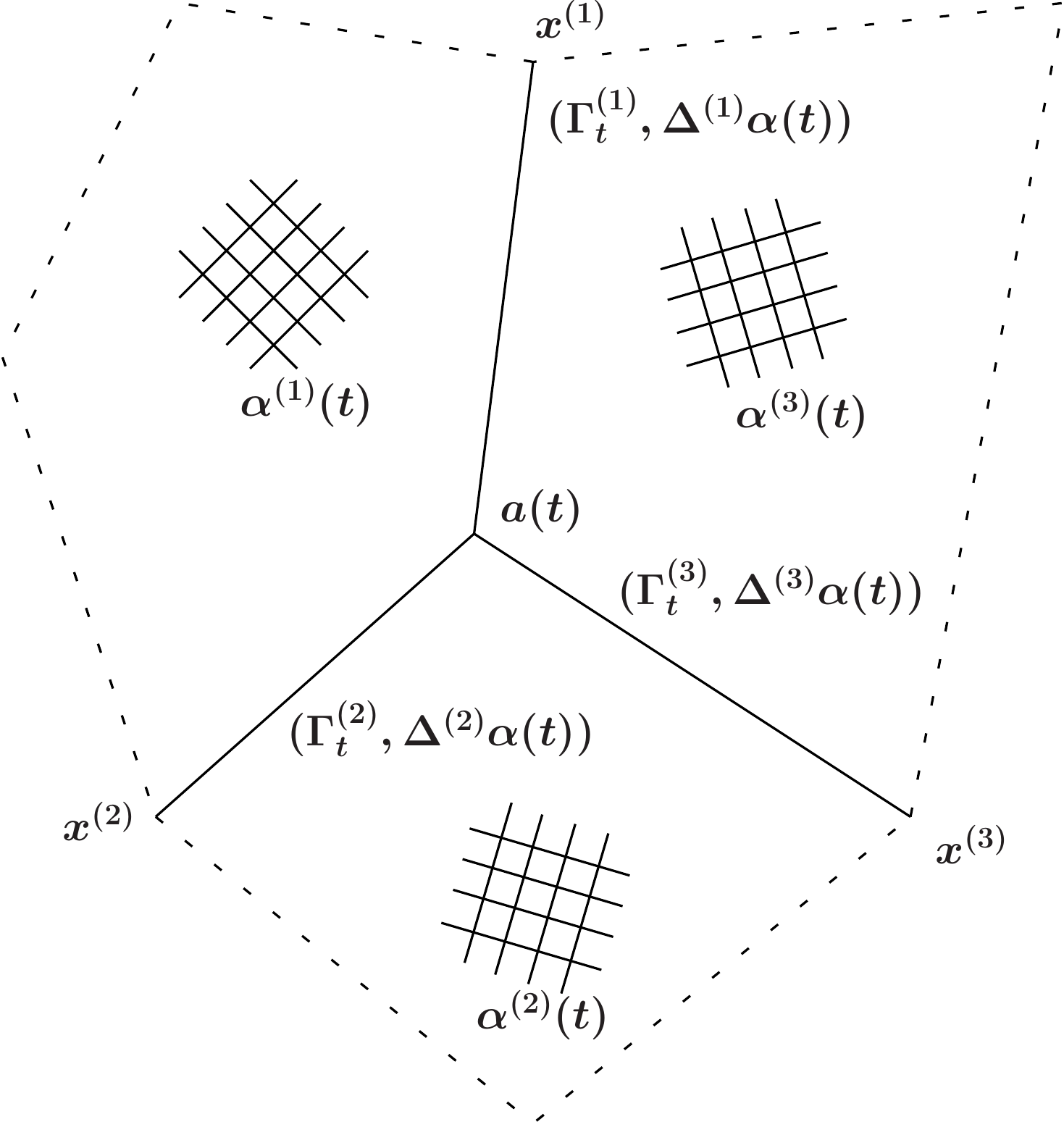}\qquad\qquad
   \includegraphics[height=6.05cm]{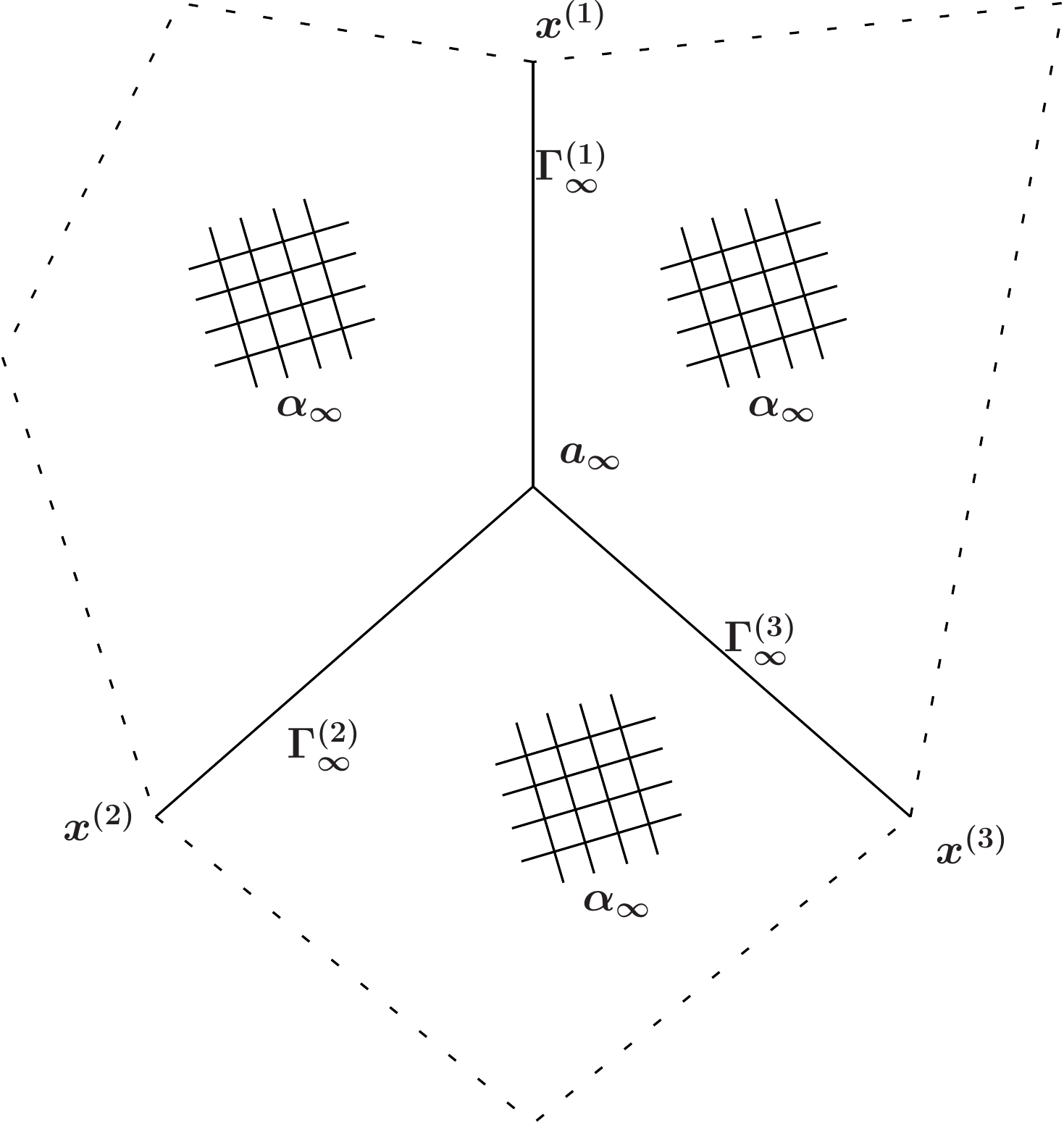}
  \caption{The left figure illustrates the model \eqref{eq:1.1}, where we
    isolate the effect of the misorientations and mobility of the
    triple junction on the motion of the grain boundaries. The right
    figure illustrates the model of the equilibrium state
  \eqref{eq:1.5} with no misorientation effect.}
  \label{fig:1.1}
 \end{figure}

Next, to preview the ideas of main results of this work, consider the equilibrium state of the grain
boundary energy \eqref{eq:2.3}, namely,
\begin{equation}
 \label{eq:1.3}
  \left\{
   \begin{aligned}
   0
   &
   =
    -
   \left(
   \sigma_\theta(\Delta^{(j+1)}\alpha_{\infty})
   |\vec{b}^{(j+1)}_{\infty}|
   -
   \sigma_\theta(\Delta^{(j)}\alpha_{\infty})
   |\vec{b}^{(j)}_{\infty}|
   \right)
   ,
   \\
   \vec{0}
   &=
   \sum_{j=1}^3
    \sigma(\Delta^{(j)}\alpha_{\infty})
   \frac{\vec{b}^{(j)}_{\infty}}{|\vec{b}^{(j)}_{\infty}|}, \\
   \vec{a}_{\infty}
   &=
   \vec{x}^{(1)}-\vec{b}^{(1)}_{\infty}
   =
   \vec{x}^{(2)}-\vec{b}^{(2)}_{\infty}
   =
   \vec{x}^{(3)}-\vec{b}^{(3)}_{\infty}.
   \end{aligned}
  \right.
\end{equation}
Assume,  for each $i=1,2,3$,
\begin{equation}
 \label{eq:1.4}
  \left|
   \sum_{j=1, j\neq i}^3
   \frac{\vec{x}^{(j)}-\vec{x}^{(i)}}{|\vec{x}^{(j)}-\vec{x}^{(i)}|}
  \right|
  >1.
\end{equation}
The assumption \eqref{eq:1.4} implies that fixed points $\vec{x}^{(1)}, \vec{x}^{(2)}$ and $\vec{x}^{(3)}$ can not belong to the single line. Furthermore,
\eqref{eq:1.4} is equivalent to the condition that in the triangle with vertices
$\vec{x}^{(1)}\vec{x}^{(2)}\vec{x}^{(3)}$, all three angles are less than
$\frac{2\pi}{3}$. Next, from the assumption \eqref{eq:1.4}, 
\eqref{eq:2.Assumption2}-\eqref{eq:2.Assumption3}, 
associated
equilibrium system \eqref{eq:1.3} becomes,
\begin{equation}
 \label{eq:1.5}
  \left\{
   \begin{aligned}
    \sum_{j=1}^3
    \frac{\vec{b}_\infty^{(j)}}{|\vec{b}_\infty^{(j)}|}&=\vec{0},
    \\
    \vec{a}_\infty+\vec{b}^{(j)}_\infty
    &=
    \vec{x}^{(j)},
    \quad
    j=1,2,3.
   \end{aligned}
  \right.
\end{equation}
In fact, assumptions \eqref{eq:2.Assumption2}-\eqref{eq:2.Assumption3} imply
$\alpha^{(1)}_\infty=\alpha^{(3)}_\infty=\alpha^{(3)}_\infty$, hence
$\Delta^{(j)}\alpha_\infty=0$ for $j=1,2,3$. We later discuss the
property of the equilibrium solution of (\ref{eq:1.3}) in Proposition \ref{prop:5.1}.

 %\begin{figure}
 % \centering
 % \includegraphics[height=7cm]{c17p02-04.eps}
  %
%  \caption{The figure illustrates the model of the equilibrium state
%  \eqref{eq:1.5}. There are no misorientation effects on the grain
%  boundaries. A solution of \eqref{eq:1.5} is unique under the
%  assumption \eqref{eq:1.4}, and it minimizes the total length of
 % $\Gamma^{(j)}_\infty$.}
  %
%  \label{fig:1.2}
% \end{figure}
The main result of this work is the local exponential stability for
the solution of the equilibrium state \eqref{eq:1.5}, Theorem
\ref{thm:5.1}. That is, if the initial misorientations are sufficiently
small and the position of the initial triple junction is sufficiently close to the
equilibrium state of the position of the triple junction, then, the solution of
\eqref{eq:1.1} exists globally in time and it exponentially converges to
the equilibrium solution of \eqref{eq:1.5}. Our strategy of the
proof here is to show a priori estimate for the position of the triple
junction, and then study the linearized problem of \eqref{eq:1.1} around
the equilibrium state. With the aid of the assumption \eqref{eq:1.4},
the equilibrium state system \eqref{eq:1.5} is uniquely
solvable. Moreover, the equilibrium state is also the energy minimizing
state. Thus, we can obtain a priori estimate for the position of the
triple junction and a full convergence result for large time asymptotics
of the solution. Again thanks to \eqref{eq:1.4}, the linearized operator
of \eqref{eq:1.1} is degenerate if and only if, there are no
misorientation effects, that is, all $\alpha^{(j)}$ are the same.
This allows us to deduce the exponential stability for the equilibrium state.

Moreover, we consider large time asymptotic behavior of the grain boundary
network. In general, the uniqueness for the equilibrium state is not
known, and there might be critical events (disappearance of the grains,
grain boundaries,
etc. \cite{DK:BEEEKT,DK:gbphysrev,MR2772123,MR3729587}). We study the
large time asymptotics of the grain
boundary network in Section \ref{sec:6} under the assumption of no critical events and around the energy minimizing state. We discuss global
existence and large time asymptotic behavior of the grain boundary
network in Section \ref{sec:6}.

\section{Properties of the Local
    Solution}\label{sec:3}
In this section, for the reader's convenience, we review some known
results, as well as established additional properties for the system,
defined in \eqref{eq:1.1}. In particular, we review local existence and
a priori estimates results for the model \eqref{eq:1.1}. More details
can be found in \cite{Katya-Chun-Mzn}.

%Let $\vec{x}^{(j)}\in\R^2$, $\vec{\alpha}_0\in\R^3$, and
%$\vec{a}_0\in\R^2$ be given initial data and we consider the initial
%value problem of \eqref{eq:1.1}, namely
%\begin{equation}
% \label{eq:3.1}
% \left\{
%  \begin{aligned}
%   \frac{d\vec{\alpha}}{dt}
%   &=
%   -
%   \mathbb{B}(t)\vec{\alpha},
%   \quad t>0, \quad
%   \vec{\alpha}(t)
%   =
%   \mathstrut^t
%   \Bigl(
%   \alpha^{(1)}(t),\
%   \alpha^{(2)}(t),\
%   \alpha^{(3)}(t)
%   \Bigr),
%   \\
%   \mathbb{B}(t)
%   &=
%   \begin{pmatrix}
%    |\vec{b}^{(1)}(t)|+|\vec{b}^{(2)}(t)|
%    &
%    -|\vec{b}^{(2)}(t)|
%    &
%    -|\vec{b}^{(1)}(t)| \\
%    -|\vec{b}^{(2)}(t)| &
%    |\vec{b}^{(2)}(t)|+|\vec{b}^{(3)}(t)| &
%    -|\vec{b}^{(3)}(t)|\\
%    -|\vec{b}^{(1)}(t)| &
%    -|\vec{b}^{(3)}(t)| &
%    |\vec{b}^{(3)}(t)|+|\vec{b}^{(1)}(t)|
%   \end{pmatrix},
%   \\
%   \frac{d\vec{a}}{dt}
%   &=
%   \sum_{j=1}^3
%   \left(
%   1+\frac12\Bigl(\alpha^{(j-1)}(t)-\alpha^{(j)}(t)\Bigr)^2
%   \right)
%   \frac{\vec{b}^{(j)}}{|\vec{b}^{(j)}|},
%   \quad t>0,
%   \\
%   \vec{a}(t)+\vec{b}^{(j)}(t)
%   &=
%   \vec{x}^{(j)},\quad
%   t>0,
%   \quad
%   j=1,2,3, \\
%   \vec{\alpha}(0)&=\vec{\alpha}_0,\quad
%   \vec{a}(0)=\vec{a}_0.
%  \end{aligned}
%  \right.
%\end{equation}
%
%Now, we are ready to state local existence and uniqueness result.

First,  using the same argument as in \cite{Katya-Chun-Mzn},
one can show the local in time existence of the triple junction and the
estimates of the maximal existence time,  under assumption of more general relaxation
time constants,
$\gamma,\ \eta>0$.

\begin{proposition}[Local existence]
 \label{prop:3.3}
 Let $\vec{x}^{(1)}$, $\vec{x}^{(2)}$, $\vec{x}^{(3)}\in \R^2$,
 $\vec{a}_0\in\R^2$, and $\vec{\alpha}_0\in\R^3$ be given initial
 data.  Assume the conditions \eqref{eq:2.Assumption1} and
 \eqref{eq:1.4} for $i=1,2,3$,  and let $\vec{a}_\infty$ be a solution of
 \eqref{eq:1.5}. Further, assume that for all $j=1,2,3$,
 \begin{equation}
  \label{eq:3.4}
   |\vec{a}_0-\vec{a}_\infty|<\frac12|\vec{b}^{(j)}_\infty|.
 \end{equation}
 Then, there exists a local in time solution $(\vec{\alpha}, \vec{a})$
 of \eqref{eq:1.1} on $[0,T_{\text{max}})$, such that
 \begin{equation}
  \label{eq:3.10}
  |\vec{a}(t)-\vec{a}_\infty|<|\vec{b}^{(j)}_\infty|
   \quad
   \text{for all}\quad
   j=1,2,3,\ \text{and}\ 
   0\leq t<T_{\text{max}}.
 \end{equation}
 Furthermore, the maximal existence time $T_{\text{max}}$ of the
 solution is estimated by
% \par {\color{blue} Masashi, below is not the analog of the final
%  version of the estimate for the maximal existence time as we have in the
%  short time paper, see estimate (4.6). Why?}
 \begin{equation}
  \label{eq:3.11}
    T_{\text{max}}
    \geq
    \min
    \left\{
     \frac{|\vec{\alpha}_0|}{4\gamma(M_1+8M_2|\vec{\alpha}_0|)\sum_{j=1}^3|\vec{b}^{(j)}_\infty|},\
     \frac{|\vec{a}_0-\vec{a}_\infty|}{3\eta M_0}
     ,\
     \frac{1}{12\gamma M_1}
     ,\
    \frac{1}{8\eta M_0\sum_{j=1}^3\frac{1}{|\vec{b}_\infty^{(j)}|-2|\vec{a}_0-\vec{a}_\infty|}}
    \right\}
 \end{equation}
 where 
\begin{equation*}
 M_0:=\sup_{|\MisOriAngle|
  \leq
  4|\vec{\alpha}_0|}|\sigma(\MisOriAngle)|,\quad
  M_1:=\sup_{|\MisOriAngle|\leq
  4|\vec{\alpha}_0|}|\sigma_\MisOriAngle(\MisOriAngle)|,\quad
  M_2
  :=
  \sup_{|\MisOriAngle_1|, |\MisOriAngle_2|\leq 4|\vec{\alpha}_0|}
  \frac{|\sigma_\MisOriAngle(\MisOriAngle_1)-\sigma_\MisOriAngle(\MisOriAngle_2)|}
  {|\MisOriAngle_1-\MisOriAngle_2|}.
\end{equation*}
\end{proposition}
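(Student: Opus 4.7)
The plan is to recast \eqref{eq:1.1} as an autonomous ODE system for $(\vec\alpha,\vec a)\in\R^3\times\R^2$ by substituting the algebraic constraint $\vec b^{(j)}=\vec x^{(j)}-\vec a$ into the first two equations. The resulting right-hand side is $C^2$ smooth on the open set $\{\vec a:\vec a\ne\vec x^{(j)},\ j=1,2,3\}$, so standard Picard--Lindel\"of theory would give a local solution; the substance of the proposition is to prove the explicit lower bound \eqref{eq:3.11} and the confinement estimate \eqref{eq:3.10}. Both will follow from a single Banach contraction argument once the working set is chosen so that $|\vec b^{(j)}|$ is uniformly bounded above and below.

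Concretely, I would work on the closed set
\begin{equation*}
X_T:=\bigl\{(\vec\alpha,\vec a)\in C([0,T];\R^3\times\R^2):\ \vec\alpha(0)=\vec\alpha_0,\ \vec a(0)=\vec a_0,\ \|\vec\alpha\|_\infty\le 2|\vec\alpha_0|,\ \|\vec a-\vec a_\infty\|_\infty\le 2|\vec a_0-\vec a_\infty|\bigr\},
\end{equation*}
which, by \eqref{eq:3.4}, guarantees the two-sided bound $|\vec b^{(j)}_\infty|-2|\vec a_0-\vec a_\infty|\le |\vec b^{(j)}(t)|\le 2|\vec b^{(j)}_\infty|$. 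Since $|\Delta^{(j)}\alpha|\le 4|\vec\alpha_0|$ on $X_T$, the constants $M_0,M_1,M_2$ provide bounds on $\sigma$, $\sigma_\theta$, and on the Lipschitz constant of $\sigma_\theta$ over the relevant range. I would then define the Picard map $\Phi$ that sends $(\vec\alpha,\vec a)$ to the pair of time integrals of the right-hand side of \eqref{eq:1.1} against the initial data.

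The self-mapping property would produce the first two entries of \eqref{eq:3.11}. The point is that \eqref{eq:2.Assumption3} forces $\sigma_\theta(0)=0$, so Taylor's theorem yields $|\sigma_\theta(\theta)|\le \min(M_1,M_2|\theta|)$; combining this with $|\vec b^{(j)}|\le 2|\vec b^{(j)}_\infty|$ in the first ODE gives $|\vec\alpha(t)-\vec\alpha_0|\le 4\gamma T(M_1+8M_2|\vec\alpha_0|)\sum_j|\vec b^{(j)}_\infty|$, which is $\le|\vec\alpha_0|$ exactly when $T$ lies below the first bound; while $|\vec a(t)-\vec a_0|\le 3\eta T M_0$ from the second ODE yields the second bound. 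The contraction step would then supply the last two entries: the Lipschitz constant of the first right-hand side on $X_T$ is bounded by $12\gamma M_1$, using the $M_1$-Lipschitz estimate on $\sigma_\theta$ together with the upper bound on $|\vec b^{(j)}|$, and that of the second by $8\eta M_0\sum_j(|\vec b^{(j)}_\infty|-2|\vec a_0-\vec a_\infty|)^{-1}$, from the elementary estimate $|\vec u/|\vec u|-\vec v/|\vec v||\le 2|\vec u-\vec v|/\min(|\vec u|,|\vec v|)$. Taking $T$ below the minimum in \eqref{eq:3.11} yields both self-mapping and strict contraction, and Banach's theorem provides the unique local solution. Extending to the maximal interval then gives \eqref{eq:3.10} and the lower bound on $T_{\text{max}}$.

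I expect the main technical obstacle to be careful bookkeeping for the first entry of \eqref{eq:3.11}. The naive estimate $|\sigma_\theta|\le M_1$ alone would produce a threshold independent of $|\vec\alpha_0|$; the appearance of $|\vec\alpha_0|$ in the numerator reflects the refined bound $|\sigma_\theta(\theta)|\le M_2|\theta|$ coming from \eqref{eq:2.Assumption3}, while the denominator $M_1+8M_2|\vec\alpha_0|$ reflects the need to interpolate between these two estimates uniformly over $|\Delta^{(j)}\alpha|\le 4|\vec\alpha_0|$. The other delicate estimate is the Lipschitz bound on $\vec b/|\vec b|$, where one must invoke the uniform lower bound on $|\vec b^{(j)}|$ --- the precise place where hypothesis \eqref{eq:3.4} enters decisively.
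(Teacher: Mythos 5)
Your proposal takes essentially the same approach as the paper: the paper does not reprove Proposition \ref{prop:3.3} but states that it is obtained by the contraction mapping principle as in the cited earlier work, and your Picard iteration on the set $X_T$, with the two-sided bounds on $|\vec{b}^{(j)}|$ furnished by \eqref{eq:3.4} and the four entries of \eqref{eq:3.11} arising from the self-map and contraction estimates, is exactly that argument. The only slip is cosmetic: $M_1$ bounds $|\sigma_\MisOriAngle|$ itself (it is $M_2$ that is the Lipschitz constant of $\sigma_\MisOriAngle$), so the $12\gamma M_1$ entry should be traced to the $\vec{a}$-dependence of the first right-hand side through the factors $|\vec{b}^{(j)}|$ rather than to a ``Lipschitz estimate on $\sigma_\MisOriAngle$.''
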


To show Proposition \ref{prop:3.3}, the contraction mapping
principle is employed \cite{Katya-Chun-Mzn}.  Note that, the
assumption \eqref{eq:3.10} guarantees that the point vector $\vec{a}$ does not coincide
with the
Dirichlet point $\vec{x}^{(j)}$, and stays as the triple
junction. Note also,  if one can obtain a priori bounds for
$|\vec{\alpha}|$ and $|\vec{a}-\vec{a}_\infty|$, 
then from \eqref{eq:3.11}
the solution given by
Proposition \ref{prop:3.3} can be extended globally in time.

Next we review some a priori estimates for \eqref{eq:1.1}. Since our
problem \eqref{eq:1.1} ensures the energy dissipation principle,
%\textcolor{blue}{(Masashi; it will be deleted) \eqref{eq:2.13}}, 
one can obtain,
\begin{proposition}
 [Energy dissipation{\cite[Proposition 5.1]{Katya-Chun-Mzn}}]
 \label{prop:3.4}
 Let $(\vec{\alpha},\vec{a})$ be a solution of \eqref{eq:1.1} on $0\leq
 t\leq T$, and let $E(t)$, given by \eqref{eq:2.3},  be
 the total grain boundary energy of the system. Then, for all $0<t\leq T$,
 \begin{equation}
  \label{eq:3.6}
   E(t)
   +
   \frac1\gamma
   \int_0^t
   \left|
   \frac{d\vec{\alpha}}{dt}(\tau)
   \right|^2
   \,d\tau
   +
   \frac1\eta
   \int_0^t
   \left|
    \frac{d\vec{a}}{dt}(\tau)
	  \right|^2
   \,d\tau \\
  =
   E(0).
 \end{equation}
\end{proposition}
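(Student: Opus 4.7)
The plan is to verify the identity \eqref{eq:3.6} by differentiating $E$ in time and substituting the evolution equations \eqref{eq:1.1}. By construction, the model \eqref{eq:1.1} arises from the maximal dissipation / energetic variational principle applied to the total energy \eqref{eq:2.3}, so the rate of change of $E$ along solutions must reproduce exactly the two dissipation terms on the right of \eqref{eq:3.6}. The proof is therefore essentially bookkeeping; no inequalities are required.

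Concretely, writing $E(t)=\sum_{j=1}^{3}\sigma(\Delta^{(j)}\alpha(t))\,|\vec{b}^{(j)}(t)|$ with $\vec{b}^{(j)}(t)=\vec{x}^{(j)}-\vec{a}(t)$, I would split
\begin{equation*}
\frac{dE}{dt}=\sum_{j=1}^{3}\sigma_\theta(\Delta^{(j)}\alpha)\,\frac{d(\Delta^{(j)}\alpha)}{dt}\,|\vec{b}^{(j)}|+\sum_{j=1}^{3}\sigma(\Delta^{(j)}\alpha)\,\frac{d|\vec{b}^{(j)}|}{dt}=:(\mathrm{I})+(\mathrm{II}).
\end{equation*}
For $(\mathrm{I})$, I expand $\Delta^{(j)}\alpha=\alpha^{(j-1)}-\alpha^{(j)}$, shift the cyclic index using the conventions $\alpha^{(0)}=\alpha^{(3)}$ and $\alpha^{(4)}=\alpha^{(1)}$, and collect the coefficient in front of each $\frac{d\alpha^{(j)}}{dt}$. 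That coefficient is exactly $\sigma_\theta(\Delta^{(j+1)}\alpha)|\vec{b}^{(j+1)}|-\sigma_\theta(\Delta^{(j)}\alpha)|\vec{b}^{(j)}|$, which by the $\alpha$-equation of \eqref{eq:1.1} equals $-\gamma^{-1}\frac{d\alpha^{(j)}}{dt}$, so $(\mathrm{I})=-\gamma^{-1}\bigl|\frac{d\vec\alpha}{dt}\bigr|^{2}$.

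For $(\mathrm{II})$, the chain rule yields $\frac{d|\vec{b}^{(j)}|}{dt}=-\frac{\vec{b}^{(j)}}{|\vec{b}^{(j)}|}\cdot\frac{d\vec{a}}{dt}$, which is well defined because \eqref{eq:3.10} keeps $|\vec{b}^{(j)}(t)|$ bounded away from $0$ on $[0,T]$. Factoring out $\frac{d\vec{a}}{dt}$ and invoking the triple-junction equation of \eqref{eq:1.1} yields
\begin{equation*}
(\mathrm{II})=-\frac{d\vec{a}}{dt}\cdot\sum_{j=1}^{3}\sigma(\Delta^{(j)}\alpha)\frac{\vec{b}^{(j)}}{|\vec{b}^{(j)}|}=-\frac{1}{\eta}\left|\frac{d\vec{a}}{dt}\right|^{2}.
\end{equation*}
Adding $(\mathrm{I})$ and $(\mathrm{II})$ produces the pointwise identity $\frac{dE}{dt}=-\gamma^{-1}\bigl|\frac{d\vec\alpha}{dt}\bigr|^{2}-\eta^{-1}\bigl|\frac{d\vec{a}}{dt}\bigr|^{2}$, and integrating from $0$ to $t$ gives \eqref{eq:3.6}.

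There is no genuine analytic obstacle. The only two points that demand a moment of care are the cyclic-index relabeling in $(\mathrm{I})$ — a sign slip or off-by-one there would destroy the cancellation that identifies the collected bracket with the right-hand side of the $\alpha$-equation — and the requirement $|\vec{b}^{(j)}(t)|>0$ needed to differentiate the Euclidean norm, which is precisely what \eqref{eq:3.10} from Proposition \ref{prop:3.3} supplies. Beyond these, the proof is a direct application of the chain rule and the model equations.
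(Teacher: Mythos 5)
Your proposal is correct and follows exactly the route the paper indicates (the paper only sketches it, deferring details to \cite{Katya-Chun-Mzn}): differentiate $E(t)=\sum_j\sigma(\Delta^{(j)}\alpha)|\vec{b}^{(j)}|$, use the cyclic index conventions to identify the coefficient of each $\frac{d\alpha^{(j)}}{dt}$ with the right-hand side of the orientation equation, and use $\frac{d|\vec{b}^{(j)}|}{dt}=-\frac{\vec{b}^{(j)}}{|\vec{b}^{(j)}|}\cdot\frac{d\vec{a}}{dt}$ together with the triple-junction equation. Your attention to the positivity of $|\vec{b}^{(j)}|$ from \eqref{eq:3.10} and to the placement of the relaxation constants $\gamma,\eta$ is consistent with how the system is used throughout the paper.
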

The estimate \eqref{eq:3.6} is obtained by taking the
derivative of the energy $E(t)$ and using the system \eqref{eq:1.1}.

We also have a maximum principle for the orientation $\alpha^{(j)}$,
\begin{proposition}
 [Maximum principle{\cite[Proposition 5.1]{Katya-Chun-Mzn}}]
 \label{prop:3.5}
 Let $(\vec{\alpha},\vec{a})$ be a solution of \eqref{eq:1.1} on $0\leq
 t\leq T$. Then, for all $0<t\leq T$,
 \begin{equation}
  \label{eq:3.7}
   |\vec{\alpha}(t)|^2
%   +
%   2\int_0^t(\mathbb{B}(\tau)
%   \vec{\alpha}(\tau)\cdot\vec{\alpha}(\tau))\,d\tau
   \leq
   |\vec{\alpha}_0|^2.
 \end{equation}
%In particular, the maximum principle $|\vec{\alpha}(t)| \leq
% |\vec{\alpha}_0|$ holds.
\end{proposition}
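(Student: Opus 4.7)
The plan is to show that $|\vec{\alpha}(t)|^2$ is non-increasing along the flow \eqref{eq:1.1} via a direct energy-type identity: differentiate $|\vec{\alpha}|^2$ in time, perform a discrete cyclic Abel summation (summation by parts), and then invoke the convexity assumption \eqref{eq:2.Assumption2}.

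First I would differentiate and substitute the ODE for $\alpha^{(j)}$ from \eqref{eq:1.1} to get
\begin{equation*}
\frac{d}{dt}|\vec{\alpha}(t)|^2
= 2\sum_{j=1}^3 \alpha^{(j)}\frac{d\alpha^{(j)}}{dt}
= -2\sum_{j=1}^3 \alpha^{(j)}\Bigl(\sigma_\theta(\Delta^{(j+1)}\alpha)|\vec{b}^{(j+1)}|-\sigma_\theta(\Delta^{(j)}\alpha)|\vec{b}^{(j)}|\Bigr).
\end{equation*}
Next, I would perform a cyclic Abel summation on the first of the two interior sums via the index change $j\mapsto j-1$, using the conventions $\alpha^{(0)}=\alpha^{(3)}$, $\alpha^{(4)}=\alpha^{(1)}$, and $\vec{b}^{(4)}=\vec{b}^{(1)}$. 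After collecting like terms and recognising $\alpha^{(j)}-\alpha^{(j-1)}=-\Delta^{(j)}\alpha$, the right-hand side collapses to the key identity
\begin{equation*}
\frac{d}{dt}|\vec{\alpha}(t)|^2
= -2\sum_{j=1}^3 \Delta^{(j)}\alpha\cdot\sigma_\theta(\Delta^{(j)}\alpha)\,|\vec{b}^{(j)}|.
\end{equation*}

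Finally, the convexity assumption \eqref{eq:2.Assumption2} guarantees $\sigma_\theta(\theta)\theta\geq 0$ for every $\theta\in\R$, while $|\vec{b}^{(j)}|\geq 0$ is automatic, so every summand on the right is non-negative. Therefore $\frac{d}{dt}|\vec{\alpha}(t)|^2\leq 0$, and integrating in time from $0$ to $t$ yields \eqref{eq:3.7}.

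The only non-trivial step is the cyclic relabelling; the remainder is pure algebra. Structurally this is the same maneuver as the one behind the energy dissipation identity \eqref{eq:3.6} of Proposition \ref{prop:3.4}, but with the quadratic test functional $|\vec{\alpha}|^2$ in place of the grain boundary energy $E$. As the remark following \eqref{eq:2.Assumption3} indicates, global convexity can moreover be relaxed to $\sigma_{\theta\theta}(0)>0$ provided $|\vec{\alpha}_0|$ is small enough that all the relevant misorientations $|\Delta^{(j)}\alpha|\leq 2|\vec{\alpha}_0|$ lie in a neighbourhood of $0$ where $\sigma_\theta(\theta)\theta\geq 0$ still holds; the argument is otherwise unchanged.
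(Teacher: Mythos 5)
Your proof is correct and follows essentially the same route as the paper: the authors likewise multiply the $\alpha^{(j)}$-equation by $\alpha^{(j)}$, sum, and use that the resulting right-hand side $-2\sum_{j}\Delta^{(j)}\alpha\,\sigma_\theta(\Delta^{(j)}\alpha)\,|\vec{b}^{(j)}|$ is non-positive by \eqref{eq:2.Assumption2}. Your cyclic Abel summation simply makes explicit the algebra the paper leaves as a one-line remark.
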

The estimate \eqref{eq:3.7} can be obtained by multiplying the equation
of $\alpha^{(j)}$ in \eqref{eq:1.1} by $\alpha^{(j)}$ and integrating in
time. Since, the right hand side of the first equation
in \eqref{eq:1.1} is non-positive definite for all $t>0$, the maximum
principle for the orientations $\vec{\alpha}$ holds.

%Lastly,  we show that the sum of the orientations is also preserved.
In addition, since $\frac{d}{dt}(\alpha^{(1)}+\alpha^{(2)}+\alpha^{(3)})=0$, we
obtain that the sum of the orientations is preserved, namely,
\begin{lemma}
 [Preserving total orientations]
 \label{lem:3.6}
 Let $(\vec{\alpha},\vec{a})$ be a solution of \eqref{eq:1.1} on $0\leq
 t\leq T$. Then, for all $0<t\leq T$,
 \begin{equation}
  \alpha^{(1)}(t)+\alpha^{(2)}(t)+\alpha^{(3)}(t)
   =\alpha_0^{(1)}+\alpha_0^{(2)}+\alpha_0^{(3)}.
 \end{equation}
\end{lemma}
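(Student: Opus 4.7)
The plan is to show that the right-hand side of the evolution equation for $\vec{\alpha}$ in \eqref{eq:1.1}, when summed over $j=1,2,3$, telescopes to zero, and then integrate in time.

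First, I would write down the equation for each $\alpha^{(j)}$ from the first line of \eqref{eq:1.1}:
\begin{equation*}
 \frac{d\alpha^{(j)}}{dt}
 =
 -\Bigl(\sigma_\theta(\Delta^{(j+1)}\alpha)|\vec{b}^{(j+1)}|
 -\sigma_\theta(\Delta^{(j)}\alpha)|\vec{b}^{(j)}|\Bigr),
 \qquad j=1,2,3,
\end{equation*}
and then sum over $j=1,2,3$. Using the cyclic conventions stated after \eqref{eq:2.20}, namely $\alpha^{(4)}=\alpha^{(1)}$ and $\vec{b}^{(4)}=\vec{b}^{(1)}$ (so that $\Delta^{(4)}\alpha = \Delta^{(1)}\alpha$), the sum on the right-hand side becomes a telescoping sum of the form $\sum_{j=1}^3 (F_{j+1}-F_j)$ with $F_4=F_1$, which vanishes identically. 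Hence
\begin{equation*}
 \frac{d}{dt}\bigl(\alpha^{(1)}(t)+\alpha^{(2)}(t)+\alpha^{(3)}(t)\bigr)=0
 \qquad\text{for all }0<t\leq T.
\end{equation*}

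Finally, I would integrate this identity from $0$ to $t$, invoking the local solution from Proposition \ref{prop:3.3}, which guarantees that $\vec{\alpha}\in C^1([0,T];\R^3)$, so the fundamental theorem of calculus applies on $[0,T]$ and yields the claimed conservation law. There is really no genuine obstacle here; the only point worth checking is that the cyclic index convention in \eqref{eq:2.20}--\eqref{eq:1.1} is consistently applied so that the sum truly telescopes, which is precisely the algebraic fact that allows the conclusion.
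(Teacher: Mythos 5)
Your proof is correct and is essentially identical to the paper's argument: the paper likewise observes that summing the $\alpha^{(j)}$ equations over $j$ makes the right-hand side telescope to zero under the cyclic convention, so that $\frac{d}{dt}(\alpha^{(1)}+\alpha^{(2)}+\alpha^{(3)})=0$, and the conservation law follows by integration. No issues.
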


%\begin{proof}
% Since,  matrix $\mathbb{B}$ is symmetric and
% $\mathbb{B}(1,1,1)=\vec{0}$, we obtain that,
% \[
% \frac{d}{dt}
% (\alpha^{(1)}+\alpha^{(2)}+\alpha^{(3)})
% =
% \left(\vec{\alpha}_t\cdot(1,1,1)\right)
% =
% -\left(\mathbb{B}\vec{\alpha}\cdot(1,1,1)\right)
% =
%  -\left(\vec{\alpha}\cdot\mathbb{B}(1,1,1)\right)
% =
% 0.
% \]
%\end{proof}

Above, we discussed some known results about the system
\eqref{eq:1.1} (cf. \cite{Katya-Chun-Mzn}). In the current
work, we employ these results to show the existence of the global
solution, and obtain the large time asymptotic behavior of the
solution to
\eqref{eq:1.1}.

%%\begin{lemma}
%% \label{lem:2.3}
%% If $c_1$, $c_2$, $c_3>0$, then the kernel of the matrix $\mathbb{C}$
%% given by \eqref{eq:2.7} is spanned by one vector .
%%\end{lemma}
%
%
%
\section{Global existence} \label{sec:4}
%\par {\textcolor{blue} {This is Katya, August 13 2020: Continue Below}}\\
In this section, 
%\textcolor{blue}{the following(DELETE)}
existence theory for a
global in time solution of \eqref{eq:1.1} is presented.  
%
%As mentioned earlier, to show the global existence we need to derive a
%priori estimates for $|\vec{\alpha}|$ and $|\vec{a}-\vec{a}_\infty|$. We
%already have a priori estimates for $|\vec{\alpha}|$, see Proposition
%\ref{prop:3.5} (the maximum principle).  Therefore, the main objective
%here is to obtain a priori estimates for $|\vec{a}-\vec{a}_\infty|$.
As mentioned after the Proposition \ref{prop:3.3} in
  Section \ref{sec:3}, in order to show the global existence in time
  of \eqref{eq:1.1}, we need to derive a priori estimates for
  $|\vec{\alpha}|$ and $|\vec{a}-\vec{a}_\infty|$. We already have a
priori estimates for $|\vec{\alpha}|$, see Proposition \ref{prop:3.5}
(the maximum principle).  Therefore, the main objectives of this section are to
obtain a priori estimates for $|\vec{a}-\vec{a}_\infty|$, Lemma
\ref{lem:4.1}, and then
show the global existence result,  Theorem \ref{thm:4.4}.

Define,
\begin{equation}
 \label{eq:4.1}
  \Cl{const:4.1}
  :=
  \inf
  \left\{
   \sum_{j=1}^3|\vec{x}^{(j)}-\vec{a}|
   :
   \text{There exists}\ 
   j=1,2,3\
    \text{such that}\ 
    |\vec{a}-\vec{a}_\infty|\geq\frac12|\vec{b}_\infty^{(j)}|
  \right\}.
\end{equation}
Since $\vec{a}_\infty$ is the unique minimizer of
\begin{equation}
 \label{eq:4.2}
  f(\vec{a})
  =
  \sum_{j=1}^{3}|\vec{x}^{(j)}-\vec{a}|,
  \quad
  \vec{a}\in\R^2
\end{equation}
and $f:\R^2\rightarrow\R$ is continuous, we have,
\begin{equation}
 \label{eq:4.3}
  0<
 \sum_{j=1}^3|\vec{b}^{(j)}_\infty|
 =
 f(\vec{a}_\infty)
 <
 \Cr{const:4.1}.
\end{equation}

%Here, global in time existence theorem for the grain boundary network under the assumption of a single triple junction is presented.

Next lemma gives  a priori estimate for the triple junction $\vec{a}$.

\begin{lemma}
 [Boundedness of the triple junction]
 \label{lem:4.1}
 Assume that an initial data $(\vec{\alpha}_0,\vec{a}_0)$ satisfies,
\begin{equation}
  \label{eq:4.4}
  E(0)= \sum_{j=1}^3
  \sigma(\Delta^{(j)}\alpha_0)
   |\vec{a}_0-\vec{x}^{(j)}|
   <
   \sigma(0)
   \Cr{const:4.1}.
 \end{equation}
% \begin{equation}
%  \label{eq:4.4}
%  E(0)= \sum_{j=1}^3
%  \textcolor{blue}
%  {\sigma(\Delta^{(j)}\alpha_0)}
%   |\vec{a}_0-\vec{x}^{(j)}|
%   <
%   \textcolor{blue}{\sigma(0)}
%   \Cr{const:4.1}
% \end{equation}
 Let $(\vec{\alpha},\vec{a})$ be a solution of \eqref{eq:1.1} on $0\leq
 t\leq T$. Then, we have that,
 \begin{equation}
  \label{eq:4.5}
   |\vec{a}(t)-\vec{a}_\infty|
   <
   \frac12|\vec{b}^{(j)}_\infty|
   =\frac{1}{2}|\vec{a}_\infty-\vec{x}^{(j)}|
 \end{equation}
 for $j=1,2,3$, and for any $0\leq t\leq T$.
\end{lemma}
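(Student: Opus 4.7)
The plan is to combine the energy dissipation inequality of Proposition~\ref{prop:3.4} with the lower bound $\sigma(\theta)\geq\sigma(0)$ from \eqref{eq:2.Assumption1}, and then use the definition of $\Cr{const:4.1}$ in \eqref{eq:4.1} as a contrapositive exclusion set. The key observation is that $\Cr{const:4.1}$ is defined precisely so that any triple junction position $\vec{a}$ that violates the desired conclusion forces the length sum $f(\vec{a})=\sum_{j=1}^3|\vec{x}^{(j)}-\vec{a}|$ to be at least $\Cr{const:4.1}$; the energy, being comparable to $f(\vec{a}(t))$ from below, is then bounded by $E(0)<\sigma(0)\Cr{const:4.1}$ and prevents that scenario.

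Concretely, I would proceed as follows. First, I would apply Proposition~\ref{prop:3.4} to obtain $E(t)\leq E(0)$ for all $0\leq t\leq T$. Next, using \eqref{eq:2.Assumption1} and the Dirichlet condition $\vec{a}(t)+\vec{b}^{(j)}(t)=\vec{x}^{(j)}$ from \eqref{eq:1.1}, I would write
\begin{equation*}
  E(t)
  =
  \sum_{j=1}^3\sigma(\Delta^{(j)}\alpha(t))|\vec{x}^{(j)}-\vec{a}(t)|
  \geq
  \sigma(0)\sum_{j=1}^3|\vec{x}^{(j)}-\vec{a}(t)|
  =
  \sigma(0)f(\vec{a}(t)).
\end{equation*}
Combining the two yields $\sigma(0)f(\vec{a}(t))\leq E(0)<\sigma(0)\Cr{const:4.1}$, hence $f(\vec{a}(t))<\Cr{const:4.1}$ for every $t\in[0,T]$.

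To conclude, I would invoke the contrapositive of the definition \eqref{eq:4.1}: if there existed some $j\in\{1,2,3\}$ and some $t^*\in[0,T]$ with $|\vec{a}(t^*)-\vec{a}_\infty|\geq\tfrac12|\vec{b}_\infty^{(j)}|$, then by \eqref{eq:4.1} the value $f(\vec{a}(t^*))$ would belong to the set over which the infimum defining $\Cr{const:4.1}$ is taken, giving $f(\vec{a}(t^*))\geq\Cr{const:4.1}$, which contradicts the inequality just established. Therefore \eqref{eq:4.5} holds for every $j$ and every $0\leq t\leq T$.

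The only non-routine point is the justification that $\Cr{const:4.1}>f(\vec{a}_\infty)$, which is exactly \eqref{eq:4.3}; this is ensured by the fact that $\vec{a}_\infty$ is the unique minimizer of the continuous, coercive function $f$ (a standard consequence of the assumption \eqref{eq:1.4}, which guarantees that the Fermat point lies in the interior of the triangle $\vec{x}^{(1)}\vec{x}^{(2)}\vec{x}^{(3)}$). Once \eqref{eq:4.3} is in hand, the hypothesis \eqref{eq:4.4} on $E(0)$ is a nonempty condition and the argument closes. I do not anticipate a real obstacle here: the constant $\Cr{const:4.1}$ and the smallness assumption on $E(0)$ are tailored so that energy dissipation alone controls the triple junction.
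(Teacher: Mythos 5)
Your proposal is correct and follows essentially the same argument as the paper: energy dissipation plus the lower bound $\sigma(\theta)\geq\sigma(0)$ gives $\sigma(0)\sum_j|\vec{x}^{(j)}-\vec{a}(t)|\leq E(0)<\sigma(0)\Cr{const:4.1}$, which by the definition of $\Cr{const:4.1}$ excludes any $t$ with $|\vec{a}(t)-\vec{a}_\infty|\geq\frac12|\vec{b}_\infty^{(j)}|$. The paper phrases it as a direct contradiction at a hypothetical time $t_1$ rather than via the contrapositive, but the content is identical.
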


\begin{proof}
 Assume, there is $0\leq t_1\leq T$ and $j=1,2,3$, such that,
 $|\vec{a}(t_1)-\vec{a}_\infty| \geq \frac12|\vec{b}^{(j)}_\infty|$.
 % $j=1,2,3$ . 
 Then \eqref{eq:2.Assumption1}, \eqref{eq:4.1} and
 Proposition \ref{prop:3.4} (energy dissipation) lead,
 \begin{equation*}
  \Cr{const:4.1}
   \leq
   \sum_{j=1}^3|\vec{a}(t_1)-\vec{x}^{(j)}|
   =
   \sum_{j=1}^3|\vec{b}^{(j)}(t_1)|
   \leq
   \frac1{\sigma(0)}
   \sum_{j=1}^3
   \sigma(\Delta^{(j)}\alpha_0)
   |\vec{a}_0-\vec{x}^{(j)}|
   =\frac{1}{\sigma(0)}E(0)
   ,
 \end{equation*}
 which contradicts \eqref{eq:4.4}.
\end{proof} 
\begin{remark}
 Note, if we assume \eqref{eq:4.4}, then we have,
 \[
 \sum_{j=1}^3|\vec{a}_0-\vec{x}^{(j)}|
 \leq
 \frac1{\sigma(0)}
 \sum_{j=1}^3
 \sigma(\Delta^{(j)}\alpha_0)
 |\vec{a}_0-\vec{x}^{(j)}|
 <\Cr{const:4.1},
 \]
 thus, $|\vec{a}_0-\vec{a}_\infty|<|\vec{b}_\infty^{(j)}|/2$ for all $j=1,2,3$.
 Hence,  the assumption \eqref{eq:3.4} in Proposition
   \ref{prop:3.3} (existence of the local in time solution)  will be automatically deduced.
\end{remark}

\begin{remark}
 Assumption \eqref{eq:4.4} is related to the smallness assumption for the initial data
 $(\vec{\alpha}_0,\vec{a}_0)$. Namely, if the initial misorientations are
 sufficiently small and 
 %the initial triple junction point 
the position of the initial triple junction
 is
 sufficiently close to
 %the equilibrium triple junction point, 
the position of the equilibrium triple junction $\vec{a}_\infty$,
 then we
 obtain \eqref{eq:4.4}.
%
% \textcolor{blue}{Masashi: I will give explanation that there is an
% initial data satisfying \eqref{eq:4.4} with an extra assumption
% $\sigma(0)=\sigma(0)$}
\end{remark}

Now we are in position to show
%establish 
the global existence of the solution of
\eqref{eq:1.1}.
\begin{theorem}
 [Global existence]
 \label{thm:4.4}
 Let $\vec{x}^{(1)}$, $\vec{x}^{(2)}$, $\vec{x}^{(3)}\in \R^2$,
 $\vec{a}_0\in\R^2$, and $\vec{\alpha}_0\in\R^3$ be the initial data
 for the system \eqref{eq:1.1}. Assume
 \eqref{eq:1.4},  and let $\vec{a}_\infty$ be a unique
 solution of the equilibrium system \eqref{eq:1.5}. Further, assume
condition (\ref{eq:4.4}).
 Then there exists a unique global in time solution $(\vec{\alpha},
 \vec{a})$ of \eqref{eq:1.1}.
\end{theorem}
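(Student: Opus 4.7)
The plan is a standard continuation argument: repeatedly apply the local existence result (Proposition \ref{prop:3.3}), using the a priori estimates to ensure that the time of local existence does not shrink to zero. Proposition \ref{prop:3.3} first produces a unique solution on a maximal interval $[0,T_{\text{max}})$; uniqueness is inherited from the contraction mapping argument used in its proof, together with the usual gluing.

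Suppose for contradiction that $T_{\text{max}}<\infty$. On $[0,T_{\text{max}})$, the maximum principle (Proposition \ref{prop:3.5}) supplies the uniform bound $|\vec{\alpha}(t)|\leq|\vec{\alpha}_0|$, and Lemma \ref{lem:4.1} — whose hypothesis \eqref{eq:4.4} is assumed — supplies the strict uniform bound $|\vec{a}(t)-\vec{a}_\infty|<\frac{1}{2}|\vec{b}_\infty^{(j)}|$ for every $j$. In particular the gap $\inf_{0\leq t<T_{\text{max}}}\bigl(|\vec{b}_\infty^{(j)}|-2|\vec{a}(t)-\vec{a}_\infty|\bigr)$ is strictly positive.

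For any $t_0\in[0,T_{\text{max}})$, I would invoke Proposition \ref{prop:3.3} afresh from the state $(\vec{\alpha}(t_0),\vec{a}(t_0))$: the smallness condition \eqref{eq:3.4} is provided by Lemma \ref{lem:4.1}, and the constants $M_0,M_1,M_2$, which depend only on $|\vec{\alpha}(t_0)|\leq|\vec{\alpha}_0|$, are bounded independently of $t_0$. Combined with the uniform gap above, the quantities appearing in \eqref{eq:3.11} are bounded below by a common constant $\tau>0$ not depending on $t_0$, so the restarted solution exists on $[t_0,t_0+\tau)$. Choosing $t_0\in(T_{\text{max}}-\tau,T_{\text{max}})$ and gluing via uniqueness yields a solution on $[0,t_0+\tau)\supsetneq[0,T_{\text{max}})$, contradicting maximality. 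Hence $T_{\text{max}}=\infty$.

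The main obstacle to verify is precisely this uniform lower bound $\tau$: the first and third entries in the minimum of \eqref{eq:3.11} are controlled directly by the maximum principle, the fourth by Lemma \ref{lem:4.1}, while the second, $|\vec{a}(t_0)-\vec{a}_\infty|/(3\eta M_0)$, requires a small extra argument to handle the regime in which the trajectory comes very close to $\vec{a}_\infty$ — there one exploits that $\sigma_\theta(0)=0$ by \eqref{eq:2.Assumption3} and $\sum_j\vec{b}_\infty^{(j)}/|\vec{b}_\infty^{(j)}|=\vec{0}$ by \eqref{eq:1.5}, so the driving terms for both $\vec{\alpha}$ and $\vec{a}$ degenerate, and a direct continuity argument shows that the orbit cannot exit its current neighborhood on the time scale set by the other three quantities in \eqref{eq:3.11}.
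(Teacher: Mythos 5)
Your proposal is correct and follows essentially the same route as the paper: the paper's proof is exactly the continuation argument you describe, combining Proposition \ref{prop:3.5} for $|\vec{\alpha}|$, Lemma \ref{lem:4.1} for $|\vec{a}-\vec{a}_\infty|$ (with Proposition \ref{prop:3.4} keeping the energy below the threshold in \eqref{eq:4.4} at the restart time), and the lower bound \eqref{eq:3.11} to relaunch the local solution. One caveat on the point you flag as the main obstacle: your proposed fix for the second entry of \eqref{eq:3.11} invokes degeneration of the driving terms near $\vec{a}_\infty$, but closeness of $\vec{a}(t_0)$ to $\vec{a}_\infty$ alone does not make $\sum_j\sigma(\Delta^{(j)}\alpha)\vec{b}^{(j)}/|\vec{b}^{(j)}|$ small, since the misorientations need not be small there. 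The simpler resolution is that this entry only serves to preserve \eqref{eq:3.10}; since $|d\vec{a}/dt|\leq 3\eta M_0$ while Lemma \ref{lem:4.1} confines the trajectory to the smaller ball of radius $\tfrac12|\vec{b}^{(j)}_\infty|$, a uniform restart time of order $\min_j|\vec{b}^{(j)}_\infty|/(6\eta M_0)$ already guarantees \eqref{eq:3.10} on the restarted interval, independently of how close $\vec{a}(t_0)$ is to $\vec{a}_\infty$.
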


%\begin{theorem}
% [Global existence]
% \label{thm:4.4}
% %
% Let $\vec{x}^{(1)}$, $\vec{x}^{(2)}$, $\vec{x}^{(3)}\in \R^2$,
% $\vec{a}_0\in\R^2$, and $\vec{\alpha}_0\in\R^3$ be the initial data
% for the system \eqref{eq:1.1}. Assume
% \eqref{eq:1.4},  and let $\vec{a}_\infty$ be a unique
% solution of the equilibrium system \eqref{eq:1.5}. Further,  assume
% condition \eqref{eq:4.4}. Then there exists a unique global in time
% solution $(\vec{\alpha}, \vec{a})$ of
% \eqref{eq:1.1}.
%\end{theorem}

\begin{proof}
 [Proof of Theorem \ref{thm:4.4}]
 We need to show that the solution given by Proposition \ref{prop:3.3}
 extends globally in time. Let $(\vec{\alpha},\vec{a})$ be a solution of
 \eqref{eq:1.1} on $0\leq t\leq T$. By Lemma \ref{lem:4.1}, we obtain
 $|\vec{a}(T)-\vec{a}_\infty|<\frac12|\vec{b}_\infty^{(j)}|$. Due to Proposition
 \ref{prop:3.4} (energy dissipation), we also have,
 \begin{equation*}
  E(T)
   =
   \sum_{j=1}^3
   \sigma(\Delta^{(j)}\alpha(T))
   |\vec{a}(T)-\vec{x}^{(j)}|
   \leq
   E(0)
   <
   \sigma(0)
   \Cr{const:4.1}.
 \end{equation*}
In addition,  from Proposition \ref{prop:3.5} (maximum principle),  we have that
 $|\vec{\alpha}(T)|\leq|\vec{\alpha}_0|$,  hence we can extend the
 solution globally in time.
\end{proof}

In the above proof, a key argument is how to obtain the a priori
estimate for the position of the triple junction
$\vec{a}$, Lemma \ref{lem:4.1}. An energy smallness condition
\eqref{eq:4.4} plays an important role to obtain the a priori estimate
for the solution of \eqref{eq:1.1}.

\section{Large time asymptotic behavior}\label{sec:5}
In this section, large time behavior of the global solution given by
Theorem \ref{thm:4.4} is presented. We first discuss
in Proposition \ref{prop:5.1} below large time asymptotic
profile  of the solution of \eqref{eq:1.1}. After
that, we show in Theorem \ref{thm:5.1} that the asymptotic profile is asymptotically exponentially
stable.

\begin{proposition}
 \label{prop:5.1}
 Let $\vec{x}^{(1)}$, $\vec{x}^{(2)}$, $\vec{x}^{(3)}\in \R^2$,
 $\vec{a}_0\in\R^2$, and $\vec{\alpha}_0\in\R^3$ be the initial data
 for the system \eqref{eq:1.1}. We assume that the
 initial data satisfy \eqref{eq:4.4},  and we also impose
 the same assumptions as in Theorem \ref{thm:4.4}. Define $\alpha_\infty$ as,
 \begin{equation}
  \label{eq:5.1}
  \alpha_\infty
   :=
   \frac{\alpha_0^{(1)}+\alpha_0^{(2)}+\alpha_0^{(3)}}{3}.
 \end{equation} 
Let $\vec{a}_\infty$ be a solution of the equilibrium system
 \eqref{eq:1.5} and $(\vec{\alpha}, \vec{a})$ be a time global solution of
 \eqref{eq:1.1}. Then,
 \begin{equation}
  \label{eq:5.2}
   \vec{\alpha}(t)\rightarrow\alpha_\infty(1,1,1),\quad
   \vec{a}(t)\rightarrow\vec{a}_\infty,
 \end{equation}
 as $t\rightarrow\infty$.
\end{proposition}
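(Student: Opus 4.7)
The plan is to use the energy-dissipation identity of Proposition \ref{prop:3.4} as a Lyapunov functional, upgrade the $L^2$ decay of the time derivatives to pointwise decay via a Barbalat-type argument, and then identify subsequential limits of the bounded orbit with the unique equilibrium supplied by the structural assumptions \eqref{eq:2.Assumption1}--\eqref{eq:2.Assumption3} on $\sigma$ together with \eqref{eq:1.4}.

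First I would show $\dot{\vec{\alpha}}(t),\dot{\vec{a}}(t)\to 0$ as $t\to\infty$. Proposition \ref{prop:3.4} yields $\dot{\vec{\alpha}},\dot{\vec{a}}\in L^2(0,\infty)$, while Proposition \ref{prop:3.5} and Lemma \ref{lem:4.1} keep $|\vec{\alpha}(t)|$ uniformly bounded and each $|\vec{b}^{(j)}(t)|$ uniformly bounded above and strictly away from $0$. Consequently the right-hand sides of \eqref{eq:1.1} are uniformly bounded in $t$, so $\dot{\vec{\alpha}},\dot{\vec{a}}$ are bounded; differentiating \eqref{eq:1.1} once more (all ingredients remain bounded) yields bounded $\ddot{\vec{\alpha}},\ddot{\vec{a}}$, and hence uniform continuity of $\dot{\vec{\alpha}},\dot{\vec{a}}$. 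Barbalat's lemma then supplies the pointwise decay. Given any sequence $t_n\to\infty$, compactness produces a subsequence along which $(\vec{\alpha}(t_n),\vec{a}(t_n))\to(\vec{\alpha}^*,\vec{a}^*)$, and passing to the limit in \eqref{eq:1.1} with $\dot{\vec{\alpha}}(t_n),\dot{\vec{a}}(t_n)\to 0$ shows that $(\vec{\alpha}^*,\vec{a}^*)$ solves the equilibrium system \eqref{eq:1.3}.

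The next task is to identify this equilibrium. From the first line of \eqref{eq:1.3}, the quantity $c:=\sigma_\theta(\Delta^{(j)}\alpha^*)|\vec{b}^{(j)*}|$ is independent of $j$. If $c\neq 0$, then \eqref{eq:2.Assumption3} forces every $\Delta^{(j)}\alpha^*\neq 0$, and \eqref{eq:2.Assumption2} forces each $\Delta^{(j)}\alpha^*$ to carry the sign of $c$; this contradicts the telescoping identity $\Delta^{(1)}\alpha^*+\Delta^{(2)}\alpha^*+\Delta^{(3)}\alpha^*=0$. Hence $c=0$, every $\Delta^{(j)}\alpha^*=0$, and Lemma \ref{lem:3.6} pins the common value of the $\alpha^{(j)*}$ to $\alpha_\infty$. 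The second line of \eqref{eq:1.3} then reduces to the Fermat-point condition \eqref{eq:1.5}, whose unique solution under \eqref{eq:1.4} is $\vec{a}_\infty$. Since every convergent subsequence of the bounded orbit tends to the single point $(\alpha_\infty(1,1,1),\vec{a}_\infty)$, a standard compactness contradiction upgrades this to convergence of the full trajectory.

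The main obstacle I anticipate is the sign argument that rules out $c\neq 0$: this is exactly where the convexity assumption \eqref{eq:2.Assumption2} combined with the nondegeneracy \eqref{eq:2.Assumption3} becomes indispensable, and it is the mechanism by which the misorientation dynamics forces the three orientations to collapse to a common value. The Barbalat step and the uniqueness of the Fermat point under \eqref{eq:1.4} are standard by comparison, though the latter does rely on a separate argument that is already implicit in the discussion around \eqref{eq:1.5}.
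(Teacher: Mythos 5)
Your proposal is correct and follows essentially the same route as the paper's proof: energy dissipation gives $L^2$ control of the time derivatives, subsequential limits of the bounded orbit are shown to solve the equilibrium system, the convexity and nondegeneracy assumptions \eqref{eq:2.Assumption2}--\eqref{eq:2.Assumption3} together with the telescoping identity $\sum_j\Delta^{(j)}\alpha=0$ force all misorientations to vanish, Lemma \ref{lem:3.6} fixes the common orientation, and uniqueness of the Fermat--Torricelli point under \eqref{eq:1.4} upgrades subsequential to full convergence. The only difference is cosmetic: you make the Barbalat-type step (uniform continuity of the derivatives via boundedness of the second derivatives) explicit, whereas the paper extracts the vanishing of $\dot{\vec{\alpha}}(t_k),\dot{\vec{a}}(t_k)$ along the subsequence more tersely; your sign argument ruling out $c\neq0$ is an equivalent rephrasing of the paper's multiplication by $\Delta^{(2)}\alpha_{\infty,*}$ and $\Delta^{(3)}\alpha_{\infty,*}$.
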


\begin{proof}
 Consider arbitrary time sequence $t_k\rightarrow\infty$. From 
 Proposition \ref{prop:3.5} (maximum principle), Lemma \ref{lem:4.1}
 (boundedness of the triple junction), and Proposition \ref{prop:3.4}
 (energy dissipation), we have a convergent subsequence (denoted by
 the same $t_k\rightarrow\infty$) such that,
 \begin{equation}
  \label{eq:5.3}
  \begin{aligned}
   \vec{\alpha}(t_k)&\rightarrow \vec{\alpha}_{\infty,*},\quad &
   \vec{a}(t_k)&\rightarrow \vec{a}_{\infty,*}, \\
   \left|
   \frac{d\vec{\alpha}}{dt}(t_k)
   \right|
   &\rightarrow0,\quad &
   \left|
   \frac{d\vec{a}}{dt}(t_k)
   \right|
   &\rightarrow0,
  \end{aligned}
 \end{equation}
 and,
 \begin{equation}
  \label{eq:5.4}
   |\vec{a}_{\infty,*}-\vec{a}_\infty|
   \leq
   \frac12|\vec{a}_\infty-\vec{x}^{(j)}|,
 \end{equation}
 for some $\vec{\alpha}_{\infty,*}\in \R^3$ and
 $\vec{a}_{\infty,*}\in\R^2$. We have to show
% $\vec{\alpha}_{\infty,*}=\alpha_\infty$ 
 $\vec{\alpha}_{\infty,*}=\alpha_\infty (1,1,1)$
 and
 $\vec{a}_{\infty,*}=\vec{a}_\infty$. Taking the same limit with
 respect to $t_k$
 in the equation \eqref{eq:1.1}, from \eqref{eq:5.3} we obtain,
 \begin{equation}
  \label{eq:5.5}
   \left\{
  \begin{aligned}
%   \vec{0}
%   &=
%   -
%   \mathbb{B}_{\infty,*}\vec{\alpha}_{\infty,*},
   0
   &
   =
   -
   \left(
   \sigma_\theta(\Delta^{(j+1)}\alpha_{\infty,*})
   |\vec{b}^{(j+1)}_{\infty,*}|
   -
   \sigma_\theta(\Delta^{(j)}\alpha_{\infty,*})
   |\vec{b}^{(j)}_{\infty,*}|
   \right)
   ,
%   \quad
%   \vec{\alpha}_{\infty,*}
%   =
%   \mathstrut^t\Big(
%   \alpha^{(1)}_{\infty,*},\
%   \alpha^{(2)}_{\infty,*},\
%   \alpha^{(3)}_{\infty,*}\Big),
%   \\
%   \mathbb{B}_{\infty,*}
%   &=
%   \begin{pmatrix}
%    |\vec{b}^{(1)}_{\infty,*}|+|\vec{b}^{(2)}_{\infty,*}|
%    &
%    -|\vec{b}^{(2)}_{\infty,*}|
%    &
%    -|\vec{b}^{(1)}_{\infty,*}| \\
%    -|\vec{b}^{(2)}_{\infty,*}| &
%    |\vec{b}^{(2)}_{\infty,*}|+|\vec{b}^{(3)}_{\infty,*}| &
%    -|\vec{b}^{(3)}_{\infty,*}|\\
%    -|\vec{b}^{(1)}_{\infty,*}| &
%    -|\vec{b}^{(3)}_{\infty,*}| &
%    |\vec{b}^{(3)}_{\infty,*}|+|\vec{b}^{(1)}_{\infty,*}|
%   \end{pmatrix},
   \\
   \vec{0}
   &=
   \sum_{j=1}^3
   \sigma(\Delta^{(j)}\alpha_{\infty,*})
%   \left(
%   1+\frac12\Bigl(\alpha^{(j-1)}_{\infty,*}-\alpha^{(j)}_{\infty,*}\Bigr)^2
%   \right)
   \frac{\vec{b}^{(j)}_{\infty,*}}{|\vec{b}^{(j)}_{\infty,*}|}, \\
   \vec{a}_{\infty,*}
   &=
   \vec{x}^{(1)}-\vec{b}^{(1)}_{\infty,*}
   =
   \vec{x}^{(2)}-\vec{b}^{(2)}_{\infty,*}
   =
   \vec{x}^{(3)}-\vec{b}^{(3)}_{\infty,*}.
  \end{aligned}
  \right.
 \end{equation}
 %  The kernel of $\mathbb{B}_{\infty,*}$ is spanned by
 % $\mathstrut^t(1,1,1)$. 
 We will show that $\Delta^{(j)}\alpha_{\infty,*}=0$
 for $j=1,2,3$. First, by \eqref{eq:5.4}, we have that,
 \[
 |\vec{b}_{\infty,*}^{(j)}|
  =
  |\vec{x}^{(j)}-\vec{a}_{\infty,*}|
  \geq
  |\vec{x}^{(j)}-\vec{a}_{\infty}|
  -
  |\vec{a}_{\infty}-\vec{a}_{\infty,*}|
  \geq
  \frac12|\vec{x}^{(j)}-\vec{a}_{\infty}|>0.
 \]
 Next, from the first equation of \eqref{eq:5.5}, we obtain
 \begin{equation}
  \label{eq:5.40}
   \sigma_\theta(\Delta^{(1)}\alpha_{\infty,*})
   |\vec{b}^{(1)}_{\infty,*}|
   =
   \sigma_\theta(\Delta^{(2)}\alpha_{\infty,*})
   |\vec{b}^{(2)}_{\infty,*}|
   =
   \sigma_\theta(\Delta^{(3)}\alpha_{\infty,*})
   |\vec{b}^{(3)}_{\infty,*}|.
 \end{equation}
 Multiplying \eqref{eq:5.40} by $\Delta^{(2)}\alpha_{\infty,*}$ and
 $\Delta^{(3)}\alpha_{\infty,*}$, and using the convexity assumption
 \eqref{eq:2.Assumption2}, we have
 \begin{equation*}
  \sigma_\theta(\Delta^{(1)}\alpha_{\infty,*})
   (\Delta^{(2)}\alpha_{\infty,*})
   |\vec{b}^{(1)}_{\infty,*}|\geq 0,\qquad
   \sigma_\theta(\Delta^{(1)}\alpha_{\infty,*})
   (\Delta^{(3)}\alpha_{\infty,*})
   |\vec{b}^{(1)}_{\infty,*}|\geq 0.
 \end{equation*}
 Thus $\Delta^{(1)}\alpha_{\infty,*}=0$ follows from
 $-\sigma_\theta(\Delta^{(1)}\alpha_{\infty,*})
 (\Delta^{(1)}\alpha_{\infty,*}) |\vec{b}^{(1)}_{\infty,*}|\geq 0$
 (obtained from the above),
 the assumptions \eqref{eq:2.Assumption2} and \eqref{eq:2.Assumption3}.
 We can obtain $\Delta^{(2)}\alpha_{\infty,*}=\Delta^{(3)}\alpha_{\infty,*}=0$ in a similar way. 

 From Lemma \ref{lem:3.6}, \eqref{eq:5.1}, and
 $\Delta^{(j)}\alpha_{\infty,*}=0$ for $j=1,2,3$, we find,
 $\alpha^{(j)}_{\infty,*}=\alpha_\infty$ for all $j=1,2,3$.
% Thus, due to Lemma \ref{lem:3.1}, the kernel of $\mathbb{B}_{\infty,*}$ is spanned by
% $\mathstrut^t(1,1,1)$.
% From Lemma \ref{lem:3.1}, \eqref{eq:5.5}, and
% \eqref{eq:5.1} we find,
% $\vec{\alpha}_{\infty,*}=\alpha_\infty\mathstrut^t(1,1,1)$. 
 Again using
 \eqref{eq:5.5}, we obtain that,
 \begin{equation*}
  \vec{0}
   =
   \sum_{j=1}^3
   \frac{\vec{b}^{(j)}_{\infty,*}}{|\vec{b}^{(j)}_{\infty,*}|}.
 \end{equation*}
 Therefore, by uniqueness of the Fermat-Torricelli problem, we obtain
 $\vec{a}_{\infty,*}=\vec{a}_\infty$(cf. the Fermat-Forricelli Problem
 \cite[Theorem 18.3]{MR1677397}). 
\end{proof}

Hereafter we denote
$\vec{\alpha}_\infty:=\alpha_\infty(1,1,1)$. To prove the exponential
stability for $(\vec{\alpha}_\infty, \vec{a}_\infty)$ of the system
\eqref{eq:1.1}, we study the linearized problem of \eqref{eq:1.1} and
show the decay properties of the solution to the linearized problem,
Propositions \ref{prop:5.2} and \ref{prop:5.4}. We first derive the linearized
problem in Lemma \ref{lem:linpr}.
\begin{lemma}
 [Linearized problem]
 \label{lem:linpr}
The linearized problem of \eqref{eq:1.1} around
$(\vec{\alpha}_\infty,\vec{a}_\infty)$ is given as,
%\begin{equation}
 %\vec{\alpha}(t)
%  =
 % \vec{\alpha}_\infty
 % +
 % \varepsilon\vec{\alpha}_L(t)
 % ,\quad
 % \vec{a}(t)
 % =
%  \vec{a}_\infty
 % +
 % \varepsilon\vec{a}_L(t)
%\end{equation}

%into \eqref{eq:1.1} and taking $\varepsilon\rightarrow0$, we obtain
\begin{equation}
 \label{eq:5.7}
  \left\{
   \begin{aligned}
    \frac{d\vec{\alpha}_L}{dt}
    =&
    -
    \gamma
    \sigma_{\MisOriAngle\MisOriAngle}
    (0)
    \mathbb{B}_\infty\vec{\alpha}_L,
    \quad
    \mathbb{B}_{\infty}
    =
    \begin{pmatrix}
     |\vec{b}^{(1)}_{\infty}|+|\vec{b}^{(2)}_{\infty}|
     &
     -|\vec{b}^{(2)}_{\infty}|
     &
     -|\vec{b}^{(1)}_{\infty}| \\
     -|\vec{b}^{(2)}_{\infty}| &
     |\vec{b}^{(2)}_{\infty}|+|\vec{b}^{(3)}_{\infty}| &
     -|\vec{b}^{(3)}_{\infty}|\\
     -|\vec{b}^{(1)}_{\infty}| &
     -|\vec{b}^{(3)}_{\infty}| &
     |\vec{b}^{(3)}_{\infty}|+|\vec{b}^{(1)}_{\infty}|
    \end{pmatrix}, \\
     \frac{d\vec{a}_L}{dt}
    &=
    -
    \eta\sigma(0)
    \sum_{j=1}^3
    \Biggl(
    \frac{1}{|\vec{b}_\infty^{(j)}|}
    \left(
    I
    -
    \frac{\vec{b}_\infty^{(j)}}{|\vec{b}_\infty^{(j)}|}
    \otimes
    \frac{\vec{b}_\infty^{(j)}}{|\vec{b}_\infty^{(j)}|}
    \right)
    \Biggr)\vec{a}_L
    =:
    -
    \eta\sigma(0)
    L_{\vec{a}}\vec{a}_L,
    \\
    \sum_{j=1}^3
    \frac{\vec{b}^{(j)}_{\infty}}{|\vec{b}^{(j)}_{\infty}|}
    &=
    \vec{0},
   \end{aligned}
  \right.
\end{equation}
 where $\otimes$ denotes the outer product of two vectors.
\end{lemma}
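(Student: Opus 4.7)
The plan is to proceed by a direct Taylor expansion of the right-hand sides of \eqref{eq:1.1} around the equilibrium $(\vec{\alpha}_\infty,\vec{a}_\infty)$, exploiting the two key algebraic facts already established: (i) at equilibrium all misorientations vanish, so $\sigma_\theta(\Delta^{(j)}\alpha_\infty)=\sigma_\theta(0)=0$ (this follows from \eqref{eq:2.Assumption2}--\eqref{eq:2.Assumption3} since $\sigma$ attains its minimum at $0$), and (ii) the three equilibrium unit tangent vectors satisfy $\sum_j \vec{b}^{(j)}_\infty/|\vec{b}^{(j)}_\infty|=\vec{0}$ by Proposition~\ref{prop:5.1}. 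I set $\vec{\alpha}=\vec{\alpha}_\infty+\vec{\alpha}_L$ and $\vec{a}=\vec{a}_\infty+\vec{a}_L$, note that $\vec{b}^{(j)}=\vec{x}^{(j)}-\vec{a}=\vec{b}^{(j)}_\infty-\vec{a}_L$, and drop all terms of order two or higher in $(\vec{\alpha}_L,\vec{a}_L)$.

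For the misorientation equation, since $\sigma_\theta(0)=0$, the expansion $\sigma_\theta(\Delta^{(j)}\alpha)=\sigma_{\theta\theta}(0)\Delta^{(j)}\alpha_L+O(|\vec{\alpha}_L|^2)$ holds, and hence any correction from expanding $|\vec{b}^{(j)}|=|\vec{b}^{(j)}_\infty|+O(|\vec{a}_L|)$ inside $\sigma_\theta(\Delta^{(j)}\alpha)|\vec{b}^{(j)}|$ is of second order and may be discarded. Using the cyclic convention $\Delta^{(j)}\alpha=\alpha^{(j-1)}-\alpha^{(j)}$, the linearization of the $\alpha^{(j)}$ equation becomes
\begin{equation*}
 \frac{d\alpha_L^{(j)}}{dt}
 =-\gamma\sigma_{\theta\theta}(0)
 \Bigl(
 (\alpha^{(j)}_L-\alpha^{(j+1)}_L)|\vec{b}^{(j+1)}_\infty|
 -(\alpha^{(j-1)}_L-\alpha^{(j)}_L)|\vec{b}^{(j)}_\infty|
 \Bigr),
\end{equation*}
and reading off the coefficients of $\alpha_L^{(1)},\alpha_L^{(2)},\alpha_L^{(3)}$ row-by-row produces exactly the symmetric matrix $\mathbb{B}_\infty$ displayed in \eqref{eq:5.7}.

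For the triple junction equation, $\sigma(\Delta^{(j)}\alpha)=\sigma(0)+O(|\vec{\alpha}_L|^2)$ again because $\sigma_\theta(0)=0$, so the factor $\sigma(\Delta^{(j)}\alpha)$ can be replaced by the constant $\sigma(0)$ through first order. The zeroth-order part of the sum vanishes by the equilibrium balance (ii), so the only surviving contribution comes from linearizing $\vec{b}^{(j)}/|\vec{b}^{(j)}|$ in $\vec{a}_L$. Using the standard derivative $\partial_{\vec{b}}(\vec{b}/|\vec{b}|)=|\vec{b}|^{-1}(I-(\vec{b}/|\vec{b}|)\otimes(\vec{b}/|\vec{b}|))$ together with $\delta\vec{b}^{(j)}=-\delta\vec{a}_L$ yields the claimed expression for $d\vec{a}_L/dt$ and the operator $L_{\vec{a}}$. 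The third identity in \eqref{eq:5.7} is simply the zeroth-order (equilibrium) constraint \eqref{eq:1.5}, recorded for reference.

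This argument is almost entirely routine Taylor expansion; the only nontrivial observation is that both equations happen to lose their leading-order contributions (the $\sigma_\theta$ term in the misorientation equation because $\sigma_\theta(0)=0$, and the sum in the $\vec{a}$ equation because of the Herring-type equilibrium condition). These cancellations are precisely what make the linearization come out in the clean, symmetric form above, and they will be the structural input used later in Propositions~\ref{prop:5.2} and~\ref{prop:5.4} to analyze the spectrum of $\mathbb{B}_\infty$ and $L_{\vec{a}}$ and thus obtain exponential decay.
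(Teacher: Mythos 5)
Your proposal is correct and follows essentially the same route as the paper: the paper linearizes by writing $\vec{\alpha}=\vec{\alpha}_\infty+\varepsilon\vec{\alpha}_L$, $\vec{a}=\vec{a}_\infty+\varepsilon\vec{a}_L$ and differentiating in $\varepsilon$ at $\varepsilon=0$, which is the same first-order expansion you perform directly, with the same two cancellations ($\sigma_\theta(0)=0$ and the equilibrium balance $\sum_j\vec{b}_\infty^{(j)}/|\vec{b}_\infty^{(j)}|=\vec{0}$, the latter coming from \eqref{eq:1.5} rather than Proposition \ref{prop:5.1}) doing the structural work.
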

\begin{proof}
%\par {\textcolor{blue} {This is Katya, April 21 2019: Continue Below}}\\
%\textcolor{blue}{Masashi: The detailed calculation is the following. Is
%it good to insert lemma to obtain the linearized problem?}
In order to obtain the linearized problem \eqref{eq:5.7},  we consider,
\begin{equation}
 \vec{\alpha}(t)
  =
  \vec{\alpha}_\infty
  +
  \varepsilon\vec{\alpha}_L(t)
  ,\quad
  \vec{a}(t)
  =
  \vec{a}_\infty
  +
  \varepsilon\vec{a}_L(t),
\end{equation}
in \eqref{eq:1.1}, and take a derivative with respect to
$\varepsilon$. 
Note, the third equation in \eqref{eq:5.7} came from the equilibrium
system \eqref{eq:1.5}, so we will only derive the equations for $\vec{\alpha}_L$ and $\vec{a}_L$.
%It is easy to show the equation for 
%the third equation in \eqref{eq:5.7}.
%$\hat{\vec{b}}^{(j)}_{\infty}$ ().
First, we derive the equation for $\vec{\alpha}_L$ (the first equation
in \eqref{eq:5.7}). Since
$\Delta^{(j)}\alpha=\varepsilon\Delta^{(j)}\alpha_L$ and
$\sigma_\theta(0)=0$, we compute,
\begin{equation*}
  \begin{split}
   &\quad
   \frac{d}{d\varepsilon}\bigg|_{\varepsilon=0}
   \left(
   -
   \Bigl(
   \sigma_\MisOriAngle(\Delta^{(j+1)}\alpha)|\vec{b}^{(j+1)}|
   -
   \sigma_\MisOriAngle(\Delta^{(j)}\alpha)|\vec{b}^{(j)}|
   \Bigr)
   \right) \\
   &=
   \frac{d}{d\varepsilon}\bigg|_{\varepsilon=0}
   \left(
   -
   \Bigl(
   \sigma_\MisOriAngle(\varepsilon\Delta^{(j+1)}\alpha_L)
   |\vec{x}^{(j+1)}-\vec{a}_\infty-\varepsilon\vec{a}_L|
   -
   \sigma_\MisOriAngle(\varepsilon\Delta^{(j)}\alpha_L)
   |\vec{x}^{(j)}-\vec{a}_\infty-\varepsilon\vec{a}_L|
   \Bigr)
   \right) \\
   &=
   -
   \sigma_{\MisOriAngle\MisOriAngle}
   (\varepsilon\Delta^{(j+1)}\alpha_L)
   \Delta^{(j+1)}\alpha_L
   |\vec{x}^{(j+1)}-\vec{a}_\infty-\varepsilon\vec{a}_L|
   +
   \sigma_{\MisOriAngle}
   (\varepsilon\Delta^{(j+1)}\alpha_L)
   \frac{(\vec{x}^{(j+1)}-\vec{a}_\infty-\varepsilon\vec{a}_L)\cdot\vec{a}_L}{|\vec{x}^{(j+1)}-\vec{a}_\infty-\varepsilon\vec{a}_L|} \\
   &\quad
   +
   \sigma_{\MisOriAngle\MisOriAngle}
   (\varepsilon\Delta^{(j)}\alpha_L)
   \Delta^{(j)}\alpha_L
   |\vec{x}^{(j)}-\vec{a}_\infty-\varepsilon\vec{a}_L|
   -
   \sigma_{\MisOriAngle}
   (\varepsilon\Delta^{(j)}\alpha_L)
   \frac{(\vec{x}^{(j)}-\vec{a}_\infty-\varepsilon\vec{a}_L)\cdot\vec{a}_L}{|\vec{x}^{(j)}-\vec{a}_\infty-\varepsilon\vec{a}_L|}
   \bigg|_{\varepsilon=0} \\ 
   &=
   -
   \sigma_{\MisOriAngle\MisOriAngle}
   (0)
   \Delta^{(j+1)}\alpha_L
   |\vec{b}^{(j+1)}_\infty|
   +
   \sigma_{\MisOriAngle\MisOriAngle}
   (0)
   \Delta^{(j)}\alpha_L
   |\vec{b}^{(j)}_\infty| \\
   &=
   -
   \sigma_{\MisOriAngle\MisOriAngle}
   (0)
   \left(
   (|\vec{b}^{(j+1)}_\infty|+|\vec{b}^{(j)}_\infty|)
   \alpha^{(j)}_L
   -
   |\vec{b}^{(j+1)}_\infty|
   \alpha^{(j+1)}_L
   -
   |\vec{b}^{(j)}_\infty|
   \alpha^{(j-1)}_L
   \right).
  \end{split} 
\end{equation*}
Next, we will elaborate on some details of the right-hand side of the
equation for $\vec{a}_L$ (the second equation in \eqref{eq:5.7}).
Since $\sigma_\theta(0)=0$, we compute,
\begin{equation*}
 \begin{split}
  &\frac{d}{d\varepsilon}\bigg|_{\varepsilon=0}
 \left(
%  \sum_{j=1}^3
%   \left(
%    1+\frac12\Bigl(\alpha^{(j-1)}(t)-\alpha^{(j)}(t)\Bigr)^2
%   \right)
  \sigma(\Delta^{(j)}\alpha)
  \frac{\vec{b}^{(j)}}{|\vec{b}^{(j)}|}
  \right) \\
  &=
  \frac{d}{d\varepsilon}\bigg|_{\varepsilon=0}
  \left(
  \sum_{j=1}^3
%  \left(
%  1+\frac{\varepsilon^2}2\Bigl(\alpha_L^{(j-1)}(t)-\alpha_L^{(j)}(t)\Bigr)^2
%  \right)
  \sigma(\varepsilon\Delta^{(j)}\alpha_L)
  \frac{\vec{x}^{(j)}-\vec{a}_\infty-\varepsilon\vec{a}_L}{|\vec{x}^{(j)}-\vec{a}_\infty-\varepsilon\vec{a}_L|}
  \right) \\
  &=
  \bigg(
  \sum_{j=1}^3
  \sigma_\theta(\varepsilon\Delta^{(j)}\alpha_L)\Delta^{(j)}\alpha_L
% \Bigl(\alpha_L^{(j-1)}(t)-\alpha_L^{(j)}(t)\Bigr)^2
  \frac{\vec{x}^{(j)}-\vec{a}_\infty-\varepsilon\vec{a}_L}{|\vec{x}^{(j)}-\vec{a}_\infty-\varepsilon\vec{a}_L|} \\
  &\quad
  +
  \sigma(\varepsilon\Delta^{(j)}\alpha_L)
%  \left(
%  1+\frac{\varepsilon^2}2\Bigl(\alpha_L^{(j-1)}(t)-\alpha_L^{(j)}(t)\Bigr)^2
%  \right)
  \left(
  \frac{-\vec{a}_L}{|\vec{x}^{(j)}-\vec{a}_\infty-\varepsilon\vec{a}_L|}
  +
  \frac{(\vec{x}^{(j)}-\vec{a}_\infty-\varepsilon\vec{a}_L)\cdot \vec{a}_L}{|\vec{x}^{(j)}-\vec{a}_\infty-\varepsilon\vec{a}_L|^3}
  (\vec{x}^{(j)}-\vec{a}_\infty-\varepsilon\vec{a}_L)
  \right)
  \bigg)\bigg|_{\varepsilon=0} \\
  &=
  \sigma(0)
  \sum_{j=1}^3
  \left(
  \frac{-\vec{a}_L}{|\vec{b}_\infty^{(j)}|}
  +
  \frac{(\vec{b}_\infty^{(j)}\cdot \vec{a}_L)}{|\vec{b}_\infty^{(j)}|^3}
  \vec{b}_\infty^{(j)}
  \right)
  =
  -
  \sigma(0)
  \sum_{j=1}^3
  \frac{1}{|\vec{b}_\infty^{(j)}|}
  \left(
  \vec{a}_L
  -
  \frac{(\vec{b}_\infty^{(j)}\cdot \vec{a}_L)}{|\vec{b}_\infty^{(j)}|^2}
  \vec{b}_\infty^{(j)}
  \right).
\end{split}
\end{equation*}
Note, that the term,
\begin{equation*}
 \frac{(\vec{b}_\infty^{(j)}\cdot \vec{a}_L)}{|\vec{b}_\infty^{(j)}|^2}
  \vec{b}_\infty^{(j)}
  =
  \left(
   \frac{\vec{b}_\infty^{(j)}}{|\vec{b}_\infty^{(j)}|}\cdot\vec{a}_L
  \right)
  \frac{\vec{b}_\infty^{(j)}}{|\vec{b}_\infty^{(j)}|}
  =
  \left(
   \frac{\vec{b}_\infty^{(j)}}{|\vec{b}_\infty^{(j)}|}
   \otimes
   \frac{\vec{b}_\infty^{(j)}}{|\vec{b}_\infty^{(j)}|}
  \right)
  \vec{a}_L,
\end{equation*}
where $\otimes$ denotes the outer product of two vectors.
%In general for $\vec{p}=(p_1,p_2,\ldots,p_n)$,
%$\vec{q}=(q_1,q_2,\ldots,q_n)\in\R^n$, $\vec{p}\otimes\vec{q}$ is a
%$n\times n$ matrix defined by $ \vec{p}\otimes\vec{q}:=(p_iq_j)_{i,j}$. Then we obtain
%\begin{equation*}
% ((\vec{p}\cdot\vec{q})\vec{p})_l
%  =
%  \sum_{k=1}^np_kq_kp_l
 % =
 % \sum_{k=1}^np_lp_kq_k
%  =
  %(\left(\vec{p}\otimes\vec{p}\right)\vec{q})_l.
%\end{equation*}
%\textcolor{blue}{Masashi: END}
\end{proof}

It is important to note that, in the linearized problem \eqref{eq:5.7}, we consider
a matrix of the form,  for $c_1$, $c_2$, $c_3\in\R$,
\begin{equation}
 \label{eq:3.2}
  \mathbb{C}
  :=\begin{pmatrix}
     c_1+c_2
     &
     -c_2
     &
     -c_1 \\
     -c_2 &
     c_2+c_3 &
     -c_3\\
     -c_1 &
     -c_3 &
     c_3+c_1
    \end{pmatrix}.
\end{equation}
By direct calculation, it can be shown that the eigenvalues of such matrix $\mathbb{C}$
 \eqref{eq:3.2} are
 \begin{equation*}
  0\ \text{and}\
   c_1+c_2+c_3
   \pm
   \sqrt{\frac12
   \Bigl(
   (c_1-c_2)^2+(c_2-c_3)^2+(c_3-c_1)^2
   \Bigr)}.
 \end{equation*}
 If $c_1$, $c_2$, $c_3\geq0$, then the matrix $\mathbb{C}$ is
 non-negative definite. Furthermore, if $c_1$, $c_2$, $c_3>0$, then
 the zero eigenvalue of $\mathbb{C}$ is simple, and $(1,1,1)$ is an eigenvector associated with the zero eigenvalue.

Next proposition gives the decay properties for solutions $\vec{\alpha}_L$ of \eqref{eq:5.7}.

\begin{proposition}
 \label{prop:5.2}
 Let $\vec{\alpha}_L$ be a solution of \eqref{eq:5.7}.  Assume that,
 $\vec{\alpha}_L(0)\cdot(1,1,1)=0$. Then, there exists a positive
 constant $\lambda_1>0$ which depends only on
 $|\vec{b}^{(j)}_\infty|$, such that,
 \begin{equation}
  \label{eq:5.8}
   |\vec{\alpha}_L(t)|
   \leq
   e^{-\gamma\sigma_{\MisOriAngle\MisOriAngle}(0)\lambda_1
   t}|\vec{\alpha}_L(0)|.
 \end{equation}
\end{proposition}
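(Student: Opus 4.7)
The plan is to exploit the spectral structure of $\mathbb{B}_\infty$ already spelled out in the paper: it has the form \eqref{eq:3.2} with $c_j = |\vec{b}^{(j)}_\infty| > 0$, so it is real symmetric and non-negative definite, with a simple zero eigenvalue whose eigenspace is spanned by $(1,1,1)$. Consequently the two remaining eigenvalues are strictly positive, and I will take $\lambda_1$ to be the smaller of them (equivalently, the spectral gap of $\mathbb{B}_\infty$ restricted to $(1,1,1)^\perp$). This $\lambda_1$ depends only on $c_1, c_2, c_3 = |\vec{b}^{(1)}_\infty|, |\vec{b}^{(2)}_\infty|, |\vec{b}^{(3)}_\infty|$ via the explicit formula given just before the proposition.

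The first step is to observe that the hypothesis $\vec{\alpha}_L(0)\cdot(1,1,1)=0$ is propagated by the flow. Taking the dot product of the ODE with $(1,1,1)$ and using that $\mathbb{B}_\infty$ is symmetric together with $\mathbb{B}_\infty(1,1,1)^T = \vec{0}$, I get
\begin{equation*}
  \frac{d}{dt}\bigl(\vec{\alpha}_L\cdot(1,1,1)\bigr)
  = -\gamma\sigma_{\theta\theta}(0)\,\vec{\alpha}_L\cdot \mathbb{B}_\infty(1,1,1)^T = 0,
\end{equation*}
so $\vec{\alpha}_L(t)\cdot(1,1,1)=0$ for all $t\geq 0$, i.e., the solution remains in the invariant subspace $V := (1,1,1)^\perp$.

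Next I perform the standard $L^2$ energy estimate. Taking the dot product of \eqref{eq:5.7}$_1$ with $\vec{\alpha}_L$ gives
\begin{equation*}
  \frac{1}{2}\frac{d}{dt}|\vec{\alpha}_L|^2
  = -\gamma\sigma_{\theta\theta}(0)\,\vec{\alpha}_L\cdot\mathbb{B}_\infty\vec{\alpha}_L.
\end{equation*}
Because $\vec{\alpha}_L(t)\in V$ and $\mathbb{B}_\infty$ is symmetric with spectral gap $\lambda_1$ on $V$, the Rayleigh quotient bound gives $\vec{\alpha}_L\cdot\mathbb{B}_\infty\vec{\alpha}_L \geq \lambda_1|\vec{\alpha}_L|^2$, hence
\begin{equation*}
  \frac{d}{dt}|\vec{\alpha}_L|^2
  \leq -2\gamma\sigma_{\theta\theta}(0)\lambda_1|\vec{\alpha}_L|^2,
\end{equation*}
and Gronwall's inequality yields \eqref{eq:5.8}.

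There is no real obstacle: the only point that needs care is verifying that the invariant subspace $V$ is exactly the orthogonal complement of the zero-eigenspace, which follows from symmetry of $\mathbb{B}_\infty$ and the simplicity of the zero eigenvalue (both guaranteed by $c_j > 0$ in the characterization of matrices of form \eqref{eq:3.2} recorded in the paper). The explicit value of $\lambda_1$ can be read off from the eigenvalue formula stated there, and depends only on $|\vec{b}^{(j)}_\infty|$, as required.
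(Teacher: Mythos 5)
Your proof is correct and follows essentially the same route as the paper's: conservation of $\vec{\alpha}_L\cdot(1,1,1)$ via symmetry of $\mathbb{B}_\infty$ and $\mathbb{B}_\infty(1,1,1)=\vec{0}$, then the $L^2$ energy estimate with the Rayleigh quotient bound on $(1,1,1)^\perp$ and Gronwall. The paper simply writes out $\lambda_1$ explicitly as the smallest positive eigenvalue using the eigenvalue formula for matrices of the form \eqref{eq:3.2}, which is the same quantity you identify as the spectral gap.
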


\begin{proof}
 Using \eqref{eq:5.7} and $\mathbb{B}_\infty(1,1,1)=\vec{0}$, we have that,
 \begin{equation*}
  \frac{d}{dt}
   (
   \vec{\alpha}_L\cdot(1,1,1)
   )
   =
   -
   \gamma\sigma_{\MisOriAngle\MisOriAngle}(0)
   \mathbb{B}_\infty\vec{\alpha}_L\cdot(1,1,1)
   =
   -
   \gamma\sigma_{\MisOriAngle\MisOriAngle}(0)
   \vec{\alpha}_L\cdot\mathbb{B}_\infty(1,1,1)
   =0,
 \end{equation*}
 hence, $\vec{\alpha}_L\cdot(1,1,1)=\vec{\alpha}_L(0)\cdot(1,1,1)=0$. Define,
 \begin{equation}
  \label{eq:5.31}
   \lambda_1
   :=
   |\vec{b}^{(1)}_\infty|+|\vec{b}^{(2)}_\infty|+|\vec{b}^{(3)}_\infty|
   -
   \sqrt{\frac12
   \Bigl(
   (|\vec{b}^{(1)}_\infty|-|\vec{b}^{(2)}_\infty|)^2
   +
   (|\vec{b}^{(2)}_\infty|-|\vec{b}^{(3)}_\infty|)^2
   +
   (|\vec{b}^{(3)}_\infty|-|\vec{b}^{(1)}_\infty|)^2
   \Bigr)},
 \end{equation}
 which is the smallest positive eigenvalue of $\mathbb{B}_\infty$.
 % due to Lemma \ref{lem:3.1}. 
 Since $|\vec{b}_\infty^{(j)}|>0$ for all $j=1,2,3$, and
 $\vec{\alpha}_L\cdot(1,1,1)=0$,
 %from Lemma \ref{lem:3.1},  
 we find that
 $\vec{\alpha}_L$ is a linear combination of the eigenvectors associated with the positive
 eigenvalues of $\mathbb{B}_\infty$. Thus, we obtain,
 \[
 \frac{d}{dt}|\vec{\alpha}_L(t)|^2
 =
 -2\gamma\sigma_{\MisOriAngle\MisOriAngle}(0)
 \mathbb{B}_\infty\vec{\alpha}_L(t)\cdot\vec{\alpha}_L(t)
 \leq
 -2\gamma\sigma_{\MisOriAngle\MisOriAngle}(0)\lambda_1
 |\vec{\alpha}_L(t)|^2.
 \]
 By the Gronwall's inequality, we obtain \eqref{eq:5.8}.
 \end{proof}

 In order to derive decay properties for the solution
 $\vec{a}_L$ of \eqref{eq:5.7} in Proposition \ref{prop:5.2}, we
 first show below that $   L_{\vec{a}}$ is positive definite.

\begin{lemma}
 \label{lem:5.3}
 There exists a positive constant $\lambda_2>0$, which depends
 only on $\vec{b}_\infty^{(j)}$, such that,
 \begin{equation}
  \label{eq:5.10}
   L_{\vec{a}}
   :=
   \sum_{j=1}^3
   \frac{1}{|\vec{b}_\infty^{(j)}|}
   \left(
    I
    -
    \frac{\vec{b}_\infty^{(j)}}{|\vec{b}_\infty^{(j)}|}
    \otimes
    \frac{\vec{b}_\infty^{(j)}}{|\vec{b}_\infty^{(j)}|}
   \right)
   \geq
   \lambda_2I.
 \end{equation}
\end{lemma}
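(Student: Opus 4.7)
The plan is first to note that in two dimensions the rank-one matrix $I-\vec{u}\otimes\vec{u}$ associated with a unit vector $\vec{u}$ is precisely the orthogonal projection onto the line perpendicular to $\vec{u}$; equivalently, it equals $\vec{u}^{\perp}\otimes\vec{u}^{\perp}$, where $\vec{u}^{\perp}$ denotes the ninety-degree rotation of $\vec{u}$. Setting $\vec{u}^{(j)}:=\vec{b}_\infty^{(j)}/|\vec{b}_\infty^{(j)}|$, this immediately yields
\[
\vec{\xi}\cdot L_{\vec{a}}\vec{\xi}
=\sum_{j=1}^3\frac{(\vec{u}^{(j),\perp}\cdot\vec{\xi})^2}{|\vec{b}_\infty^{(j)}|}\geq 0
\]
for every $\vec{\xi}\in\R^2$, so $L_{\vec{a}}$ is automatically positive semidefinite; upgrading this to the strict bound $\lambda_2 I$ is the only work left.

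The next step is to identify the potential null directions. The quadratic form displayed above vanishes at $\vec{\xi}$ if and only if $\vec{\xi}\perp\vec{u}^{(j),\perp}$ for each $j$, which in the plane is equivalent to $\vec{\xi}$ being parallel to every $\vec{u}^{(j)}$. The third equation of \eqref{eq:5.7} reads $\sum_{j=1}^3 \vec{u}^{(j)}=\vec{0}$; squaring the identity $-\vec{u}^{(3)}=\vec{u}^{(1)}+\vec{u}^{(2)}$ (and its cyclic permutations) yields $\vec{u}^{(i)}\cdot\vec{u}^{(j)}=-1/2$ for $i\neq j$, so the three unit vectors are mutually at angle $2\pi/3$ and in particular span $\R^2$. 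Hence no nonzero $\vec{\xi}$ can be parallel to all three, the null space is trivial, and the quadratic form is strictly positive on $\R^2\setminus\{\vec{0}\}$.

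To finish, I would define
\[
\lambda_2:=\min_{|\vec{\xi}|=1}\vec{\xi}\cdot L_{\vec{a}}\vec{\xi},
\]
which is attained on the compact unit circle and is strictly positive by the previous paragraph; homogeneity then gives \eqref{eq:5.10}, and every ingredient in the construction depends only on $\vec{b}_\infty^{(j)}$. The only genuine subtlety is ruling out collinearity of the three unit vectors $\vec{u}^{(j)}$, which is handled in one line by the equilibrium closure $\sum_j \vec{u}^{(j)}=\vec{0}$ coming ultimately from assumption \eqref{eq:1.4}. An entirely equivalent route is to exploit the $2\times 2$ structure explicitly: the trace of $L_{\vec{a}}$ is $\sum_j 1/|\vec{b}_\infty^{(j)}|>0$, and once the mutual angles $2\pi/3$ of the $\vec{u}^{(j)}$ are fixed, the determinant is a positive multiple of $\sin^2(2\pi/3)$, so both eigenvalues of $L_{\vec{a}}$ are strictly positive and $\lambda_2$ may be taken to be the smaller one.
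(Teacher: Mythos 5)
Your proposal is correct and follows essentially the same route as the paper: both show the quadratic form $\vec{\xi}\cdot L_{\vec{a}}\vec{\xi}=\sum_j|\vec{b}_\infty^{(j)}|^{-1}\bigl(|\vec{\xi}|^2-(\vec{u}^{(j)}\cdot\vec{\xi})^2\bigr)\geq0$ and then rule out a null vector by observing it would have to be parallel to all three $\vec{b}_\infty^{(j)}$. You are in fact slightly more careful than the paper at the final step — the paper dismisses this with a terse appeal to ``the dimension of $\R^2$,'' whereas you explicitly derive the non-collinearity from the equilibrium closure $\sum_j\vec{u}^{(j)}=\vec{0}$ (which forces mutual angles $2\pi/3$); your trace--determinant alternative is precisely what the paper carries out in Appendix~\ref{sec:A} to make $\lambda_2$ explicit.
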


\begin{proof}
 For $\vec{\xi}\in\R^2$, we have that,
 \[
 \left(
 \sum_{j=1}^3
 \frac{1}{|\vec{b}_\infty^{(j)}|}
 \left(
 I
 -
 \frac{\vec{b}_\infty^{(j)}}{|\vec{b}_\infty^{(j)}|}
 \otimes
 \frac{\vec{b}_\infty^{(j)}}{|\vec{b}_\infty^{(j)}|}
 \right)
 \vec{\xi}
 \cdot
 \vec{\xi}
 \right)
 =
 \sum_{j=1}^3
 \frac{1}{|\vec{b}_\infty^{(j)}|}
 \left(
 |\vec{\xi}|^2
 -
 \left(
 \frac{\vec{b}_\infty^{(j)}}{|\vec{b}_\infty^{(j)}|}
 \cdot
 \vec{\xi}
 \right)^2
 \right)
 \geq0,
 \]
 hence, we obtain \eqref{eq:5.10} with some non-negative constant $\lambda_2\geq0$.

 Assume now that $\lambda_2=0$, then there is
 $\vec{\xi}_0\in\Sp^1$ such that,
 \[
 \sum_{j=1}^3
 \frac{1}{|\vec{b}_\infty^{(j)}|}
 \left(
 1
 -
 \left(
 \frac{\vec{b}_\infty^{(j)}}{|\vec{b}_\infty^{(j)}|}
 \cdot
 \vec{\xi}_0
 \right)^2
 \right)
 =0.
 \]
 Thus,  $\vec{\xi}_0$ has to be parallel to all $\vec{b}_\infty^{(j)}$ for
 $j=1,2,3$. This is contradiction to the dimension of $\R^2$.
\end{proof}

The convergence rate of the global solution to the equilibrium state depends on the decay rate
of the linearized solution, hence it is important to give estimates for the
constant $\lambda_2$. We will give an explicit form of
$\lambda_2$ in Appendix \ref{sec:A}. As a matter of fact
  from detailed calculations  in Appendix \ref{sec:A}, the constant $\lambda_2$ depends only on $|\vec{b}_\infty^{(j)}|$, see \eqref{eq:A.1}.

Similar to the result of Proposition~\ref{prop:5.2}, we have,
\begin{proposition}
 \label{prop:5.4}
 Let $\vec{a}_L(t)$ be a solution of \eqref{eq:5.7}. Then,
 \begin{equation}
  \label{eq:5.11}
   |\vec{a}_L(t)|
   \leq 
   e^{-\eta\sigma(0)\lambda_2
   t}|\vec{a}_L(0)|,
 \end{equation}
 for all $t>0$, and the constant $\lambda_2>0$ is given by Lemma \ref{lem:5.3}.
\end{proposition}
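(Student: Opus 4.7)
The plan is to mimic the argument used in Proposition~\ref{prop:5.2}: test the linear ODE against $\vec{a}_L$, invoke the coercivity bound from Lemma~\ref{lem:5.3}, and close with Gronwall's inequality. Concretely, I would take the Euclidean inner product of the second equation of \eqref{eq:5.7} with $\vec{a}_L(t)$. Since $L_{\vec{a}}$ is a sum of positive multiples of the symmetric projections $I-\frac{\vec{b}_\infty^{(j)}}{|\vec{b}_\infty^{(j)}|}\otimes\frac{\vec{b}_\infty^{(j)}}{|\vec{b}_\infty^{(j)}|}$, it is symmetric, which gives the energy-type identity
\begin{equation*}
\frac{1}{2}\frac{d}{dt}|\vec{a}_L(t)|^2
=-\eta\sigma(0)\, L_{\vec{a}}\vec{a}_L(t)\cdot\vec{a}_L(t).
\end{equation*}

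Next, I would apply Lemma~\ref{lem:5.3} to bound the right-hand side from above by $-\eta\sigma(0)\lambda_2|\vec{a}_L(t)|^2$. Combined with the positivity of $\eta$, $\sigma(0)$ (from \eqref{eq:2.Assumption1}), and $\lambda_2$, this yields the scalar differential inequality
\begin{equation*}
\frac{d}{dt}|\vec{a}_L(t)|^2\leq -2\eta\sigma(0)\lambda_2\,|\vec{a}_L(t)|^2.
\end{equation*}
A direct application of Gronwall's inequality gives $|\vec{a}_L(t)|^2\leq e^{-2\eta\sigma(0)\lambda_2 t}|\vec{a}_L(0)|^2$, and taking square roots produces exactly the claimed bound \eqref{eq:5.11}.

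I do not expect any real obstacle. The essential work has been done in Lemma~\ref{lem:5.3}, which establishes the strict positive definiteness $L_{\vec{a}}\geq\lambda_2 I$ on $\R^2$. In contrast with Proposition~\ref{prop:5.2}, there is no zero eigenvalue to account for, so no orthogonality condition on the initial datum $\vec{a}_L(0)$ is needed and the decay is unconditional. The only thing worth double-checking is the symmetry of $L_{\vec{a}}$ used in the energy identity, which is immediate from the definition since each rank-one tensor $\tfrac{\vec{b}_\infty^{(j)}}{|\vec{b}_\infty^{(j)}|}\otimes\tfrac{\vec{b}_\infty^{(j)}}{|\vec{b}_\infty^{(j)}|}$ is symmetric.
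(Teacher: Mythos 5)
Your argument is correct and is exactly what the paper intends: the paper omits the proof of Proposition~\ref{prop:5.4}, stating only that it is ``similar to the result of Proposition~\ref{prop:5.2},'' and the proof of that proposition is precisely the energy identity plus the coercivity bound (there from the eigenvalue $\lambda_1$, here from Lemma~\ref{lem:5.3}) followed by Gronwall. Your observation that no orthogonality condition on $\vec{a}_L(0)$ is needed, since $L_{\vec{a}}$ has no zero eigenvalue, correctly identifies the only structural difference from Proposition~\ref{prop:5.2}.
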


Denote,
\begin{equation}
\label{eq:5.12k}
\lambda:=\min\{\gamma\sigma_{\MisOriAngle\MisOriAngle}(0)\lambda_1,\eta\sigma(0)\lambda_2\},
\end{equation}
 where $\lambda_1$, $\lambda_2$ are given in Propositions
  \ref{prop:5.2}, and \ref{prop:5.4}, respectively. 
\begin{remark}
Note that $\lambda_1$, $\lambda_2$ are derived from eigenvalues of the linearized problem \eqref{eq:5.7}. The constant $\lambda_1$ is the smallest positive eigenvalue of the linearized operator for the equation of the orientations $\vec{\alpha}$, see Proposition \ref{prop:5.2} and \eqref{eq:5.31}. The constant $\lambda_2$ is the smallest eigenvalue of the linearized operator for the equation of the triple junction $\vec{a}$, see Lemma \ref{lem:5.3}, Proposition \ref{prop:5.4}, and \eqref{eq:A.1}.
\end{remark}

For a solution $(\vec{\alpha}, \vec{a})$ of \eqref{eq:1.1}, define
$\vec{\alpha}_p$ and $\vec{a}_p$ as,
\begin{equation}
 \label{eq:5.12}
  \vec{\alpha}=\vec{\alpha}_\infty+\vec{\alpha}_{p},\quad
  \vec{a}=\vec{a}_\infty+\vec{a}_{p}.
\end{equation}
%\textcolor{blue}{Since $\Delta\alpha^{(j)}=\Delta\alpha^{(j)}_p$},
Then
$\vec{\alpha}_{p}$ and $\vec{a}_p$ satisfy,
\begin{equation}
 \label{eq:5.13}
  \left\{
   \begin{aligned}
    (\vec{\alpha}_p)_t
    &=
    -
    \gamma\sigma_{\MisOriAngle\MisOriAngle}(0)
    \mathbb{B}_\infty\vec{\alpha}_p
    +
    \gamma
    \left(
    \sigma_{\MisOriAngle\MisOriAngle}(0)
    \mathbb{B}_\infty\vec{\alpha}_p
    -
    \vec{\Phi}(\vec{\alpha},\vec{a})
    \right)
    ,
%    \sigma_{\MisOriAngle\MisOriAngle}(0)
%    (\mathbb{B}_\infty\vec{\alpha}_p})
%    \left(
%    (|\vec{b}^{(j+1)}_\infty|+|\vec{b}^{(j)}_\infty|)
%    \alpha^{(j)}
%    -
%    |\vec{b}^{(j+1)}_\infty|
%    \alpha^{(j+1)}
%    -
%    |\vec{b}^{(j)}_\infty|
%    \alpha^{(j-1)}
%    \right)
%    } \\
%    &\quad
%    \textcolor{blue}{
%    +
%    \left(
%    \sigma_{\MisOriAngle\MisOriAngle}
%    (0)
%    \left(
%    (|\vec{b}^{(j+1)}_\infty|+|\vec{b}^{(j)}_\infty|)
%    \alpha^{(j)}
%    -
%    |\vec{b}^{(j+1)}_\infty|
%    \alpha^{(j+1)}
%    -
%    |\vec{b}^{(j)}_\infty|
%    \alpha^{(j-1)}
%    \right)
%    -\left(
%    \sigma_{\MisOriAngle}(\Delta^{(j+1)}\alpha_p)|\vec{b}^{j+1}|
%    -
%    \sigma_{\MisOriAngle}(\Delta^{(j)}\alpha_p)|\vec{b}^{j}|
%    \right)
%    \right)
%    }
%    -\mathbb{B}_\infty\vec{\alpha}_p
%    -(\mathbb{B}-\mathbb{B}_\infty)\vec{\alpha}_p, 
    \\
    (\vec{a}_p)_t
    &=
    -
    \eta\sigma(0)
    L_{\vec{a}}\vec{a}_p
    +
    \eta
    \sum_{j=1}^3
    \left(
    \frac{\sigma(0)
    }{|\vec{b}_\infty^{(j)}|}
    \left(
    I
    -
    \frac{\vec{b}_\infty^{(j)}}{|\vec{b}_\infty^{(j)}|}
    \otimes
    \frac{\vec{b}_\infty^{(j)}}{|\vec{b}_\infty^{(j)}|}
    \right)
    \vec{a}_p
    +
%    \left(
%    1+
%    \frac12(\alpha^{(j-1)}-\alpha^{(j)})^2
%    \right)
    \sigma(\Delta^{(j)}\alpha)
    \frac{\vec{b}^{(j)}}{|\vec{b}^{(j)}|}
    \right), \\
    \vec{\alpha}_p(0)&=\vec{\alpha}_0-\vec{\alpha}_\infty,\qquad
    \vec{a}_p(0)=\vec{a}_0-\vec{a}_\infty,
   \end{aligned}
  \right.
\end{equation}
where $L_{\vec{a}}$ is defined as in (\ref{eq:5.10})
 and
 $\vec{\Phi}(\vec{\alpha},\vec{a})$ is given by
\begin{equation*}
 \vec{\Phi}(\vec{\alpha},\vec{a})
  =
  \left(
   \Phi^{(j)}(\vec{\alpha},\vec{a})
  \right)_j
 ,\quad
 \Phi^{(j)}(\vec{\alpha},\vec{a})
 =
 \sigma_\MisOriAngle(\Delta^{(j+1)}\alpha)|\vec{b}^{(j+1)}|
 -
 \sigma_\MisOriAngle(\Delta^{(j)}\alpha)|\vec{b}^{(j)}|.
\end{equation*}
Note that the first equation of the original problem
\eqref{eq:1.1} can be written as
$\vec{\alpha}_t=-\gamma\vec{\Phi}(\vec{\alpha},\vec{a})$.
%\begin{equation*}
 %L_{\vec{a}}
 %=
 %\sum_{j=1}^3
 %\frac{1}{|\vec{b}_\infty^{(j)}|}
% \left(
 % I
%  -
%  \frac{\vec{b}_\infty^{(j)}}{|\vec{b}_\infty^{(j)}|}
 % \otimes
 % \frac{\vec{b}_\infty^{(j)}}{|\vec{b}_\infty^{(j)}|}
 %\right).
%\end{equation*}

In the next two Lemmas \ref{lem:5.5} and \ref{lem:5.6},  we will show the decay estimates for the perturbation terms $\vec{\alpha}_p$, $\vec{a}_p$.

\begin{lemma}
 \label{lem:5.5}
 Let $(\vec{\alpha}, \vec{a})$ be a global solution
 of \eqref{eq:1.1}, and let $\vec{\alpha}_p$ and $\vec{a}_p$ be defined as in \eqref{eq:5.12}. Then, there are positive constants
 $\Cr{const:5.11}>0$ and $\Cr{eps:5.11}>0$, such that, if
 $|\vec{\alpha}_p(t)|+|\vec{a}_p(t)|<\Cr{eps:5.11}$ for all $t>0$, then
 \begin{equation}
  \label{eq:5.14}
   e^{
   \lambda t}|\vec{\alpha}_p(t)|
   \leq
   |\vec{\alpha}_p(0)|
   +
   \Cr{const:5.11}
   \int_0^t
   e^{\lambda s}
   (
   |\vec{\alpha}_p(s)|^2
   +
   |\vec{a}_p(s)|^2
   )
   \,ds.
 \end{equation}
\end{lemma}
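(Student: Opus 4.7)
The plan is to view the $\vec{\alpha}_p$ equation in \eqref{eq:5.13} as a semilinear perturbation of the linear flow generated by $-\gamma\sigma_{\MisOriAngle\MisOriAngle}(0)\mathbb{B}_\infty$, apply Duhamel's formula, and absorb the nonlinear term via a quadratic Taylor bound. Writing the inhomogeneous term in \eqref{eq:5.13} as $\gamma R(\vec{\alpha},\vec{a})$ where $R(\vec{\alpha},\vec{a}) := \sigma_{\MisOriAngle\MisOriAngle}(0)\mathbb{B}_\infty\vec{\alpha}_p - \vec{\Phi}(\vec{\alpha},\vec{a})$, variation of parameters gives
\begin{equation*}
\vec{\alpha}_p(t) = e^{-\gamma\sigma_{\MisOriAngle\MisOriAngle}(0)\mathbb{B}_\infty t}\vec{\alpha}_p(0) + \gamma\int_0^t e^{-\gamma\sigma_{\MisOriAngle\MisOriAngle}(0)\mathbb{B}_\infty(t-s)}\, R(\vec{\alpha}(s),\vec{a}(s))\,ds.
\end{equation*}

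To apply the decay estimate of Proposition \ref{prop:5.2}, both $\vec{\alpha}_p(0)$ and the nonlinear driver $R(\vec{\alpha}(s),\vec{a}(s))$ must lie in the subspace $\{(1,1,1)\}^\perp$, on which $\mathbb{B}_\infty$ has smallest eigenvalue $\lambda_1$. The first follows from Lemma \ref{lem:3.6} together with the definition \eqref{eq:5.1} of $\alpha_\infty$. For the second, $\mathbb{B}_\infty\vec{\alpha}_p\cdot(1,1,1)=0$ since $\mathbb{B}_\infty$ is symmetric with kernel $\mathrm{span}\{(1,1,1)\}$, and $\vec{\Phi}(\vec{\alpha},\vec{a})\cdot(1,1,1)=0$ by a direct telescoping of its definition. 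Hence $\{(1,1,1)\}^\perp$ is invariant under both the linear semigroup and the full flow, and Proposition \ref{prop:5.2} gives $|e^{-\gamma\sigma_{\MisOriAngle\MisOriAngle}(0)\mathbb{B}_\infty\tau}\vec{\xi}|\leq e^{-\gamma\sigma_{\MisOriAngle\MisOriAngle}(0)\lambda_1\tau}|\vec{\xi}|\leq e^{-\lambda\tau}|\vec{\xi}|$ on this subspace, by the definition \eqref{eq:5.12k} of $\lambda$.

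The analytic core is the quadratic bound $|R(\vec{\alpha},\vec{a})|\leq C(|\vec{\alpha}_p|^2+|\vec{a}_p|^2)$, valid under a smallness hypothesis on $|\vec{\alpha}_p|+|\vec{a}_p|$. Using $\Delta^{(j)}\alpha=\Delta^{(j)}\alpha_p$ and $\sigma_\MisOriAngle(0)=0$, the $C^3$ regularity of $\sigma$ yields $\sigma_\MisOriAngle(\Delta^{(j)}\alpha_p)=\sigma_{\MisOriAngle\MisOriAngle}(0)\Delta^{(j)}\alpha_p+O(|\vec{\alpha}_p|^2)$, and smoothness of $\vec{a}\mapsto|\vec{b}_\infty^{(j)}-\vec{a}_p|$ for $|\vec{a}_p|$ small (bounded below by, say, $|\vec{b}_\infty^{(j)}|/2$) gives $|\vec{b}^{(j)}|=|\vec{b}_\infty^{(j)}|+O(|\vec{a}_p|)$. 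Expanding $\Phi^{(j)}$ and using the algebraic identity $\sigma_{\MisOriAngle\MisOriAngle}(0)(|\vec{b}_\infty^{(j+1)}|\Delta^{(j+1)}\alpha_p-|\vec{b}_\infty^{(j)}|\Delta^{(j)}\alpha_p)=\sigma_{\MisOriAngle\MisOriAngle}(0)(\mathbb{B}_\infty\vec{\alpha}_p)^{(j)}$, the linear parts cancel exactly, leaving a cross term $\sigma_{\MisOriAngle\MisOriAngle}(0)\Delta^{(j)}\alpha_p\,(|\vec{b}^{(j)}|-|\vec{b}_\infty^{(j)}|)=O(|\vec{\alpha}_p||\vec{a}_p|)$ together with pure second-order pieces; the cross term is absorbed via $|ab|\leq\tfrac12(a^2+b^2)$. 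Substituting into Duhamel, taking norms, and multiplying by $e^{\lambda t}$ yields \eqref{eq:5.14}. The main technical obstacle is the careful bookkeeping of the Taylor expansion showing every residual term is genuinely of order $|\vec{\alpha}_p|^2+|\vec{a}_p|^2$ uniformly on the small ball, with constants controlled by $\|\sigma\|_{C^3}$ on a compact misorientation range and by the lower bound $|\vec{b}_\infty^{(j)}|>0$.
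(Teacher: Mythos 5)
Your proposal is correct and follows essentially the same route as the paper: Duhamel's formula for the $\vec{\alpha}_p$ equation in \eqref{eq:5.13}, the observation that both $\vec{\alpha}_p(0)$ and the residual $\sigma_{\MisOriAngle\MisOriAngle}(0)\mathbb{B}_\infty\vec{\alpha}_p-\vec{\Phi}$ are orthogonal to $(1,1,1)$ so that Proposition \ref{prop:5.2} applies, and the quadratic Taylor bound on the residual. The only difference is that you spell out the cancellation of the linear terms in the Taylor expansion more explicitly than the paper, which simply records the expansion $\vec{\Phi}=\sigma_{\MisOriAngle\MisOriAngle}(0)\mathbb{B}_\infty\vec{\alpha}_p+O(|\vec{\alpha}_p|^2+|\vec{a}_p|^2)$.
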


\begin{proof}
 By the Duhamel principle,
 \begin{equation}
  \label{eq:5.15}
  \vec{\alpha}_p(t)
   =
   e^{-t\gamma\sigma_{\MisOriAngle\MisOriAngle}(0)\mathbb{B}_\infty}
   \vec{\alpha}_p(0)
   +
   \gamma
   \int_0^t
   e^{-(t-s)\gamma\sigma_{\MisOriAngle\MisOriAngle}(0)\mathbb{B}_\infty}
   \left(
    \sigma_{\MisOriAngle\MisOriAngle}(0)
    \mathbb{B}_\infty\vec{\alpha}_p(s)
    -
    \vec{\Phi}(\vec{\alpha}(s),\vec{a}(s))
   \right)
   \,ds.
 \end{equation}
 Since $\vec{\alpha}_0\cdot(1,1,1)=\vec{\alpha}_\infty\cdot(1,1,1)$,
 $\vec{\alpha}_p(0)$ is perpendicular to the vector $(1,1,1)$.
 Also the function
 $\sigma_{\MisOriAngle\MisOriAngle}(0)
 \mathbb{B}_\infty\vec{\alpha}_p(s)
  -
 \vec{\Phi}(\vec{\alpha}(s),\vec{a}(s))
 $
 is also perpendicular to the vector $(1,1,1)$ for all $s>0$, since $\mathbb{B}_\infty\vec{\alpha}_p\cdot(1,1,1)=\vec{\Phi}\cdot(1,1,1)=0$.
 By Proposition \ref{prop:5.2}
 we obtain that,
 \begin{equation}
  \label{eq:5.16}
    |e^{-t\gamma\sigma_{\MisOriAngle\MisOriAngle}(0)
    \mathbb{B}_\infty}\vec{\alpha}_p(0)|
    \leq
    e^{-
    \lambda t}|\vec{\alpha}_p(0)|, 
 \end{equation}
 and
 \begin{multline}
  \label{eq:5.41}
  \left|
  e^{-(t-s)\gamma\sigma_{\MisOriAngle\MisOriAngle}(0)
  \mathbb{B}_\infty}
  \left(
  \sigma_{\MisOriAngle\MisOriAngle}(0)
  \mathbb{B}_\infty\vec{\alpha}_p(s)
  -
  \vec{\Phi}(\vec{\alpha}(s),\vec{a}(s))
  \right)
  \right| \\
  \leq
  e^{-
  \lambda(t-s)}
  \left|
  \sigma_{\MisOriAngle\MisOriAngle}(0)
  \mathbb{B}_\infty\vec{\alpha}_p(s)
  -
  \vec{\Phi}(\vec{\alpha}(s),\vec{a}(s))
  \right|.
 \end{multline}
 By the Taylor expansion $\vec{\Phi}(\vec{\alpha},\vec{a})$
 around  $(\vec{\alpha}_\infty,\vec{a}_\infty)$, we obtain
 \begin{equation*}
  \vec{\Phi}(\vec{\alpha},\vec{a})
   =
   \sigma_{\MisOriAngle\MisOriAngle}(0)
   \mathbb{B}_\infty\vec{\alpha}_p
   +
   O\left(|\vec{\alpha}_p|^2+|\vec{a}_p|^2\right),
   \qquad
   \text{as}
   \   
   |\vec{\alpha}_p|+|\vec{a}_p|
   \rightarrow0.
 \end{equation*}
 Hence, there are $\Cl{const:5.11}>0$ and $\Cl[eps]{eps:5.11}>0$ such
 that, if $|\vec{\alpha}_p|+|\vec{a}_p|<\Cr{eps:5.11}$,
 \begin{equation}
  \label{eq:5.42}
  \left|
   \sigma_{\MisOriAngle\MisOriAngle}(0)
   \mathbb{B}_\infty\vec{\alpha}_p(s)
   -
   \vec{\Phi}(\vec{\alpha}(s),\vec{a}(s))
  \right|  
  \leq
  \Cr{const:5.11}
  (|\vec{\alpha}_p|^2
  +
  |\vec{a}_p|^2).
 \end{equation}
 Using the estimates \eqref{eq:5.16}, \eqref{eq:5.41} and
 \eqref{eq:5.42} in \eqref{eq:5.15}, we obtain the result
 \eqref{eq:5.14}.

%By Corollary \ref{cor:3.2}, we have,
% \begin{equation}
%  \label{eq:5.17}
%   \begin{split}
%    |(\mathbb{B}(s)-\mathbb{B}_\infty)\vec{\alpha}_p(s)|
%%    &\leq 3
%%    \left(
%%    \left|
%%    |\vec{b}^{(1)}(s)|-|\vec{b}_\infty^{(1)}|
%%    \right|
%%    +
%%    \left|
%%    |\vec{b}^{(2)}(s)|-|\vec{b}_\infty^{(2)}|
%%    \right|
%%    +
%%    \left|
%%    |\vec{b}^{(3)}(s)|-|\vec{b}_\infty^{(3)}|
%%    \right|
%%    \right)
%%    |\vec{\alpha}_p(s)| \\
%    &\leq 3
%    \left(
%    \left|
%    \vec{b}^{(1)}(s)-\vec{b}_\infty^{(1)}
%    \right|
%    +
%    \left|
%    \vec{b}^{(2)}(s)-\vec{b}_\infty^{(2)}
%    \right|
%    +
%    \left|
%    \vec{b}^{(3)}(s)-\vec{b}_\infty^{(3)}
%    \right|
%    \right)
%    |\vec{\alpha}_p(s)| \\
%    &\leq 9
%    |\vec{a}_p(s)|
%    |\vec{\alpha}_p(s)|
%   \end{split}
% \end{equation}
\end{proof}

\begin{lemma}
 \label{lem:5.6}
 Let $(\vec{\alpha}, \vec{a})$ be a global solution
 of \eqref{eq:1.1}, and let $\vec{\alpha}_p$ and $\vec{a}_p$ be defined as in \eqref{eq:5.12}. 
% Let $\vec{\alpha}_p$ and $\vec{a}_p$ be defined as in \eqref{eq:5.12}. 
 Then, there are positive constants $\Cl{const:5.4}>0$
 and $\Cl[eps]{eps:5.1}>0$, such that, if
 $|\vec{\alpha}_p(t)|+|\vec{a}_p(t)|<\Cr{eps:5.1}$ for
 all $t>0$, then,
 \begin{equation}
  \label{eq:5.18}
   e^{\lambda t}
   |\vec{a}_p(t)|
   \leq
   |\vec{a}_p(0)|
   +
   \Cr{const:5.4}
   \int_0^t
   e^{\lambda s}
   \left(
    |\vec{\alpha}_p(s)|^2
    +
    |\vec{a}_p(s)|^2
   \right)
   \,ds,
 \end{equation}
 for all $t>0$.
\end{lemma}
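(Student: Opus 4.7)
The plan is to mirror the strategy used for Lemma \ref{lem:5.5}, applying Duhamel's principle to the second equation of \eqref{eq:5.13} and then using the spectral estimate of Proposition \ref{prop:5.4} together with a Taylor expansion that exposes the quadratic structure of the nonlinear remainder. Specifically, writing the evolution of $\vec{a}_p$ in the form
\begin{equation*}
   (\vec{a}_p)_t
    =
    -\eta\sigma(0)L_{\vec{a}}\vec{a}_p
    +
    \eta\vec{R}(\vec{\alpha}_p,\vec{a}_p),
\end{equation*}
where $\vec{R}$ denotes the bracketed expression in \eqref{eq:5.13}, the Duhamel formula gives
\begin{equation*}
  \vec{a}_p(t)
   =
   e^{-t\eta\sigma(0)L_{\vec{a}}}\vec{a}_p(0)
   +
   \eta
   \int_0^t
   e^{-(t-s)\eta\sigma(0)L_{\vec{a}}}
   \vec{R}(\vec{\alpha}_p(s),\vec{a}_p(s))
   \,ds.
\end{equation*}
By Lemma \ref{lem:5.3} the operator $L_{\vec{a}}$ satisfies $L_{\vec{a}}\geq\lambda_2 I$, so Proposition \ref{prop:5.4} (equivalently a direct spectral computation) yields $|e^{-\tau\eta\sigma(0)L_{\vec{a}}}\vec{v}|\leq e^{-\eta\sigma(0)\lambda_2\tau}|\vec{v}|\leq e^{-\lambda\tau}|\vec{v}|$ for all $\tau\geq 0$ and $\vec{v}\in\R^2$, by the choice of $\lambda$ in \eqref{eq:5.12k}.

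The key technical step, which is the analogue of \eqref{eq:5.42}, is to show the quadratic bound
\begin{equation*}
   |\vec{R}(\vec{\alpha}_p,\vec{a}_p)|
   \leq
   \Cr{const:5.4}
   \bigl(|\vec{\alpha}_p|^2+|\vec{a}_p|^2\bigr)
\end{equation*}
whenever $|\vec{\alpha}_p|+|\vec{a}_p|$ is sufficiently small. To see this, Taylor-expand each term $\sigma(\Delta^{(j)}\alpha)\vec{b}^{(j)}/|\vec{b}^{(j)}|$ about $(\vec{\alpha}_\infty,\vec{a}_\infty)$. The zeroth-order contribution is $\sigma(0)\vec{b}_\infty^{(j)}/|\vec{b}_\infty^{(j)}|$, whose sum over $j$ vanishes by the equilibrium condition (the third equation of \eqref{eq:5.7}). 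Using $\sigma_\theta(0)=0$, the first-order term in $\vec{\alpha}_p$ disappears, while the first-order term in $\vec{a}_p$ is exactly $-\sigma(0)L_{\vec{a}}^{(j)}\vec{a}_p$ by the computation already performed in the proof of Lemma \ref{lem:linpr}. This is precisely the term that is added back inside the bracket of \eqref{eq:5.13}, so the linear contribution cancels and only the $O(|\vec{\alpha}_p|^2+|\vec{a}_p|^2)$ remainder from Taylor's theorem survives.

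Combining the two ingredients, taking norms and multiplying by $e^{\lambda t}$ gives
\begin{equation*}
   e^{\lambda t}|\vec{a}_p(t)|
   \leq
   |\vec{a}_p(0)|
   +
   \eta\Cr{const:5.4}
   \int_0^t
   e^{\lambda s}
   \bigl(|\vec{\alpha}_p(s)|^2+|\vec{a}_p(s)|^2\bigr)
   \,ds,
\end{equation*}
which is \eqref{eq:5.18} after absorbing $\eta$ into the constant. The main obstacle is not computational but conceptual: one must carefully verify that the linearization of the nonlinear drift around the equilibrium really is $-\sigma(0)L_{\vec{a}}\vec{a}_p$ with no residual linear dependence on $\vec{\alpha}_p$, which relies crucially on the assumptions $\sigma_\theta(0)=0$ (from \eqref{eq:2.Assumption3}) and the balance condition $\sum_j \vec{b}_\infty^{(j)}/|\vec{b}_\infty^{(j)}|=\vec{0}$; this is what distinguishes the present argument from a naive application of Gronwall.
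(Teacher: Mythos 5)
Your proposal is correct and follows essentially the same route as the paper's proof: Duhamel's formula for the second equation of \eqref{eq:5.13}, the semigroup decay $|e^{-\tau\eta\sigma(0)L_{\vec{a}}}\vec{v}|\leq e^{-\eta\sigma(0)\lambda_2\tau}|\vec{v}|\leq e^{-\lambda\tau}|\vec{v}|$ from Lemma \ref{lem:5.3} and Proposition \ref{prop:5.4}, and the Taylor expansion \eqref{eq:5.21modify} showing that the linear part of the drift cancels against the term added back in \eqref{eq:5.13}, leaving a quadratic remainder $F(s)=O(|\vec{\alpha}_p|^2+|\vec{a}_p|^2)$. Your extra remarks on why the zeroth- and first-order terms vanish (the equilibrium balance condition and $\sigma_\theta(0)=0$) are exactly the points the paper's computation in Lemma \ref{lem:linpr} establishes.
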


\begin{proof}
 By the Duhamel principle for (\ref{eq:5.13}), we have,
 \begin{equation}
  \label{eq:5.19}
   \vec{a}_p(t)
   =
   e^{-t\eta\sigma(0)L_{\vec{a}}}
   \vec{a}_p(0)
   +
   \eta
   \int_0^t
   e^{-(t-s)\eta\sigma(0)L_{\vec{a}}}
   F(s)\,ds,
 \end{equation}
 where
 \begin{equation}
  \label{eq:5.20}
   F(s):=
   \sum_{j=1}^3
   \left(
    \frac{\sigma(0)}{|\vec{b}_\infty^{(j)}|}
    \left(
     I
     -
     \frac{\vec{b}_\infty^{(j)}}{|\vec{b}_\infty^{(j)}|}
     \otimes
     \frac{\vec{b}_\infty^{(j)}}{|\vec{b}_\infty^{(j)}|}
    \right)
    \vec{a}_p
    +
%    \left(
%     1+
%     \frac12(\alpha^{(j-1)}-\alpha^{(j)})^2
%    \right)
    \sigma(\Delta^{(j)}\alpha)
    \frac{\vec{b}^{(j)}}{|\vec{b}^{(j)}|}
   \right).
 \end{equation}
 By Lemma \ref{lem:5.3} and Proposition \ref{prop:5.4}, we obtain,
 \begin{equation}
  \label{estFm}
  |\vec{a}_p(t)|
   \leq
   e^{-\lambda t}
   |\vec{\alpha}_p(0)|
   +
   \eta
   \int_0^t
   e^{-\lambda (t-s)}
   |F(s)|\,ds.
 \end{equation}
 Next, by the Taylor expansion of
 $\sigma(\Delta^{(j)}\alpha)\vec{b}^{(j)}/|\vec{b}^{(j)}|$ around 
 $(\vec{\alpha}_\infty,\vec{a}_\infty)$, we obtain,
 \begin{equation}
   \label{eq:5.21modify}
    \sum_{j=1}^3
    \sigma(\Delta^{(j)}\alpha)\frac{\vec{b}^{(j)}}{|\vec{b}^{(j)}|}
    =
    -
    \sum_{j=1}^3
    \frac{\sigma(0)}{|\vec{b}_\infty^{(j)}|}
    \left(
     I
     -
     \frac{\vec{b}_\infty^{(j)}}{|\vec{b}_\infty^{(j)}|}
     \otimes
     \frac{\vec{b}_\infty^{(j)}}{|\vec{b}_\infty^{(j)}|}
    \right)
    \vec{a}_p
    +
    O(|\vec{\alpha}_p|^2+|\vec{a}_p|^2),\quad
    \text{as}\ |\vec{\alpha}_p|+|\vec{a}_p|\rightarrow0.   
%   \frac{1}{|\vec{x}^{(j)}-\vec{a}|}
%   =
%   \frac{1}{|\vec{x}^{(j)}-\vec{a}_\infty-\vec{a}_p|}
%   =
%   \frac{1}{|\vec{b}^{(j)}_\infty-x\vec{a}_p|}
%   \bigg|_{x=1}.
 \end{equation}
 Using, \eqref{eq:5.21modify}
 % and \eqref{eq:5.7}
 in \eqref{eq:5.20}, we obtain,
 \begin{equation}
%  \begin{split}
   F(s)
    =
    O(|\vec{\alpha}_p|^2+|\vec{a}_p|^2),\quad
    \text{as}\ |\vec{\alpha}_p|+|\vec{a}_p|\rightarrow0.   
    %   &=
%   \frac12\sum_{j=1}^3
%   (\alpha^{(j-1)}-\alpha^{(j)})^2
%   \hat{\vec{b}}^{(j)}
%   +O(|\vec{a}_p|^2) \\
%   &=
%   \frac12
%   \sum_{j=1}^3
%   (\alpha_p^{(j-1)}-\alpha_p^{(j)})^2
%   \hat{\vec{b}}^{(j)}
%   +O(|\vec{a}_p|^2),
%   \quad
%   \text{as}\ |\vec{a}_p|\rightarrow0.
%  \end{split}
 \end{equation}
 Hence, there are $\Cr{const:5.4}>0$ and $\Cr{eps:5.1}>0$ such that, if
 $|\vec{\alpha}_p|+|\vec{a}_p|<\Cr{eps:5.1}$,
 \begin{equation}
  \label{estF}
%  \begin{split}
%  |F(s)|
%  &\leq
%  \frac12\sum_{j=1}^3
%  (\alpha^{(j-1)}_p-\alpha_p^{(j)})^2+
%  \Cr{const:5.4}|\vec{a}_p|^2 \\
%  &\leq
%  \sum_{j=1}^3
%  \left(
%  (\alpha^{(j-1)}_p)^2+(\alpha^{(j)}_p)^2
%  \right)
%  +
%  \Cr{const:5.4}|\vec{a}_p|^2
%  =
%  2|\vec{\alpha}_p|^2
%  +
%  \Cr{const:5.4}|\vec{a}_p|^2.
%  \end{split}
  |F(s)|
  \leq
  \Cr{const:5.4}
  \left(
  |\vec{\alpha}_p|^2
  +
  |\vec{a}_p|^2
  \right).
 \end{equation}
Using estimate (\ref{estF}) in (\ref{estFm}), we conclude with the
desired estimate \eqref{eq:5.18} on $\vec{a}_p(t)$.
\end{proof}

%\todo{More detailed estimates of $C_5$ and $\varepsilon_1$ will be
%presented in Appendix B}

Now we are in position to show exponential stability
  of the asymptotic profile of the solution
 % for $(\vec{\alpha}_\infty, \vec{a}_\infty)$ 
of the system \eqref{eq:1.1}.

\begin{theorem}
 \label{thm:5.1}
 There is a small constant $\Cl[eps]{eps:5.2}>0$ such that,  if
 $|\vec{\alpha}_0-\vec{\alpha}_\infty|+|\vec{a}_0-\vec{a}_\infty|<\Cr{eps:5.2}$, then the associated
 global solution $(\vec{\alpha},\vec{a})$ of the system \eqref{eq:1.1} satisfies,
 \begin{equation}
  \label{eq:5.24}
  |\vec{\alpha}(t)-\vec{\alpha}_\infty|
   +
   |\vec{a}(t)-\vec{a}_\infty|
   \leq \Cr{const:5.5}e^{-\lambda^{\star} t},
 \end{equation}
 for some positive constants $\Cl{const:5.5}, \lambda^{\star}>0$. The
 decay order $\lambda^{\star}$ is explicitly estimated as,
 \begin{equation}\label{eq:5.41decay}
  \lambda^{\star} \geq \lambda,
%\geq
 %  \min\{\textcolor{blue}{\gamma\sigma_{\MisOriAngle\MisOriAngle}(0)}\lambda_1,\textcolor{blue}{\eta\sigma(0)}\lambda_2\},
 \end{equation}
 where $\lambda$ is defined in (\ref{eq:5.12k}).
%$\lambda_1$, $\lambda_2>0$ are positive constants depending only on
 %$\vec{b}_\infty^{(j)}$. 

\end{theorem}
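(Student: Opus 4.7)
The plan is to work with the perturbations $\vec{\alpha}_p = \vec{\alpha} - \vec{\alpha}_\infty$, $\vec{a}_p = \vec{a} - \vec{a}_\infty$ of \eqref{eq:5.12} and to run a standard bootstrap (continuous induction) argument built on Lemmas~\ref{lem:5.5} and \ref{lem:5.6}. Introduce the weighted quantity
\[
U(t) := e^{\lambda t}\bigl(|\vec{\alpha}_p(t)| + |\vec{a}_p(t)|\bigr), \quad t \geq 0.
\]
Adding \eqref{eq:5.14} and \eqref{eq:5.18} and using the elementary bound $|\vec{\alpha}_p(s)|^2 + |\vec{a}_p(s)|^2 \leq (|\vec{\alpha}_p(s)| + |\vec{a}_p(s)|)^2 = U(s)^2 e^{-2\lambda s}$, I obtain, with $C_\star := \Cr{const:5.11} + \Cr{const:5.4}$,
\[
U(t) \leq U(0) + C_\star \int_0^t U(s)^2 e^{-\lambda s}\,ds,
\]
whenever the smallness $|\vec{\alpha}_p(s)| + |\vec{a}_p(s)| < \varepsilon_\star := \min\{\Cr{eps:5.11},\Cr{eps:5.1}\}$ holds on $[0,t]$.

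Next I would choose $\Cr{eps:5.2}$ so small that whenever $|\vec{\alpha}_0 - \vec{\alpha}_\infty| + |\vec{a}_0 - \vec{a}_\infty| < \Cr{eps:5.2}$, both (i) condition \eqref{eq:4.4} holds and Theorem~\ref{thm:4.4} delivers a global solution, and (ii) the initial size $U(0) = |\vec{\alpha}_0 - \vec{\alpha}_\infty| + |\vec{a}_0 - \vec{a}_\infty|$ satisfies $U(0) < \min\{\varepsilon_\star/2,\ \lambda/(4 C_\star)\}$. Set
\[
T^* := \sup\{t \geq 0 : U(s) \leq 2 U(0)\ \text{for all}\ s \in [0,t]\}.
\]
Continuity of $U$ yields $T^* > 0$; on $[0,T^*]$ one has $|\vec{\alpha}_p(s)| + |\vec{a}_p(s)| \leq 2U(0) e^{-\lambda s} \leq 2U(0) < \varepsilon_\star$, so the displayed integral inequality applies and gives
\[
U(T^*) \leq U(0) + 4 C_\star U(0)^2 \int_0^{T^*} e^{-\lambda s}\,ds \leq U(0)\Bigl(1 + \tfrac{4 C_\star U(0)}{\lambda}\Bigr) < 2 U(0).
\]
If $T^* < \infty$ continuity would force $U(T^*) = 2 U(0)$, contradicting the strict bound above; therefore $T^* = \infty$ and $U(t) \leq 2 U(0)$ for every $t \geq 0$. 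Unwinding the definition of $U$ yields \eqref{eq:5.24} with $\Cr{const:5.5} = 2 U(0)$ and decay rate $\lambda^\star = \lambda$, establishing \eqref{eq:5.41decay}.

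The main obstacle is simply orchestrating three smallness thresholds consistently: the perturbative smallness $\varepsilon_\star$ needed to invoke Lemmas~\ref{lem:5.5}--\ref{lem:5.6}, the energy smallness \eqref{eq:4.4} needed for global existence in Theorem~\ref{thm:4.4}, and the quantitative smallness $U(0) < \lambda/(4 C_\star)$ needed to make the quadratic self-improvement \emph{strict} and so close the bootstrap. A minor secondary point is that the cited lemmas are stated under a global-in-time smallness hypothesis, but their proofs (via Duhamel) only use pointwise smallness on the interval of integration, so the estimate is legitimately applicable on the growing bootstrap interval $[0,T^*]$. No Gronwall or additional spectral information is required beyond what is already encoded in Propositions~\ref{prop:5.2} and \ref{prop:5.4}.
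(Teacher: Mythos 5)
Your proposal is correct and follows essentially the same route as the paper: both work with the perturbations of \eqref{eq:5.12}, combine the Duhamel-based estimates of Lemmas~\ref{lem:5.5} and \ref{lem:5.6} into a weighted quantity ($U$ versus the paper's $V+W$), and close via a continuity/bootstrap contradiction under a smallness condition on the initial data, yielding the rate $\lambda$ of \eqref{eq:5.12k}. The only cosmetic difference is that you absorb the quadratic term directly using the bootstrap bound $U(s)\leq 2U(0)$ inside the integral, whereas the paper first linearizes the quadratic term as $U^2\leq 2\Cr{eps:5.2}\,U$ and then applies Gronwall's inequality; both closures are valid and quantitatively equivalent.
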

\begin{proof}
 [Proof of Theorem \ref{thm:5.1}]
 Let $\vec{\alpha}_p$ and $\vec{a}_p$ be defined as in \eqref{eq:5.12}, and
 define $V(t):=e^{\lambda t}|\vec{\alpha}_p(t)|$ and
 $W(t):=e^{\lambda t}|\vec{a}_p(t)|$.  Next, take sufficiently small
 $0<\Cr{eps:5.2}<\min\{\Cr{eps:5.11},
 \Cr{eps:5.1}\}/2$ such that \eqref{eq:4.4} is fulfilled if an initial data $(\vec{\alpha}_0,\vec{a}_0)$ satisfy $|\vec{\alpha}_0-\vec{\alpha}_\infty|+|\vec{a}_0-\vec{a}_\infty|<\Cr{eps:5.2}$. Here, the constants $\Cr{eps:5.11}$ and
 $\Cr{eps:5.1}$ are given in Lemmas \ref{lem:5.5} and \ref{lem:5.6}, and
 assume that $|\vec{\alpha}_0-\vec{\alpha}_\infty|+|\vec{a}_0-\vec{a}_\infty|<\Cr{eps:5.2}$, namely $|\vec{\alpha}_p(0)|+|\vec{a}_p(0)|<\Cr{eps:5.2}$. 
 In order to show \eqref{eq:5.24}, it is enough to show the boundedness for $V(t)+W(t)$.
 Now, assume $V(t)+W(t)<2\Cr{eps:5.2}$ for $0\leq t< t_0$ and
 $V(t_0)+W(t_0)=2\Cr{eps:5.2}$. Note that,
 $V(t)+W(t)<\Cr{eps:5.11}$, $\Cr{eps:5.1}$ for
 $0<t<t_0$, thus we can apply Lemmas \ref{lem:5.5} and
 \ref{lem:5.6}.  Therefore, from \eqref{eq:5.14}, \eqref{eq:5.18} and
 that $V(t)$, $W(t)\leq 2\Cr{eps:5.2}$ for $0<t\leq t_0$, we obtain,
 \begin{equation}
  \begin{split}
   \label{eq:5.25}
   V(t)+W(t)
   &\leq
   V(0)+W(0)
   +
   \frac{\Cr{const:5.6}}{2}
   \int_0^te^{-\lambda s}(V^2(s)+W^2(s))\,ds \\
   &\leq
   \Cr{eps:5.2}
   +
   \Cr{const:5.6}\Cr{eps:5.2}\int_0^te^{-\lambda s}(V(s)+W(s))\,ds,
  \end{split}
 \end{equation}
 where
 $\Cl{const:5.6}=2(\Cr{const:5.11}+\Cr{const:5.4})>0$. Applying
 the Gronwall's inequality to \eqref{eq:5.25}, we have that,
 \begin{equation}
  V(t)+W(t)
   \leq
   \Cr{eps:5.2}
   +
   \Cr{const:5.6}
   \Cr{eps:5.2}^2
   \int_0^t
   e^{-\lambda s}
   \exp
   \left(
   \Cr{const:5.6}
   \Cr{eps:5.2}
   \int_s^t
   e^{-\lambda u}
   \,du
   \right)
   \,ds,
   \quad
   0\leq t\leq t_0.
 \end{equation}
 Hence, we can easily obtain that,
 \begin{equation*}
  \Cr{const:5.6}
   \Cr{eps:5.2}^2
   \int_0^t
   e^{-\lambda s}
   \exp
   \left(
   \Cr{const:5.6}
   \Cr{eps:5.2}
   \int_s^t
   e^{-\lambda u}
   \,du
   \right)
   \,ds
   \leq
   \frac{\Cr{const:5.6}
   \Cr{eps:5.2}^2}
   {\lambda}
   \exp
   \left(
    \frac{\Cr{const:5.6}
    \Cr{eps:5.2}}
    {\lambda}
   \right).
 \end{equation*}
 Thus, if we take $\Cr{eps:5.2}>0$ sufficiently small as,
\begin{equation}
 \label{eq:5.27}
  \frac{\Cr{const:5.6}
  \Cr{eps:5.2}}
  {\lambda}
  \exp
  \left(
   \frac{\Cr{const:5.6}
   \Cr{eps:5.2}}
   {\lambda}
  \right)
  <
  1,
\end{equation}
 then we deduce that $V(t_0)+W(t_0)< 2\Cr{eps:5.2}$, which contradicts the
 definition of $t_0$ above. Therefore, $V(t)+W(t)$ is bounded for $0<t<\infty$,
 and we obtain \eqref{eq:5.24}.
\end{proof}

\begin{remark}
Note that our argument is based on the uniqueness of the equilibrium state
for the system \eqref{eq:1.5}. Otherwise, we can not recover full
convergence in time as in 
Proposition \ref{prop:5.1}. However, thanks to the uniqueness of the equilibrium
state \eqref{eq:1.5}, we can consider the linearized problem
\eqref{eq:5.7} around the equilibrium state, and we can obtain the exponential
uniform estimate \eqref{eq:5.24} for the solution of the system \eqref{eq:1.1}.
\end{remark}

%\begin{remark}
% \color{blue}(Masashi: Is this remark still needed? Since I delete the
% brief derivation section, the parameters $\gamma$ and $\eta$ are not
% appeared in the article) If we do not assume $\eta=\gamma=1$ (see Sect
% \ref{sec:2}, (\ref{eq:2.14})-(\ref{eq:2.17})), the order of the decay
% for $k$ should be proportional to $\gamma$ and $\eta$. In
% fact, $k$ is given by
% $\min\{\gamma\textcolor{blue}{\sigma(0)}\lambda_1,\eta\textcolor{blue}{\sigma_{\MisOriAngle\MisOriAngle}(0)}\lambda_2\}$,
% where $\lambda_1$, $\lambda_2$ is defined by \eqref{eq:5.31}
% and \eqref{eq:5.10}, respectively.
%\end{remark}
Note also, that by Theorem \ref{thm:5.1}, we obtain exponential decay
of the total grain boundary energy to the equilibrium energy, that is

\begin{corollary}
 Under the same assumption as in Theorem \ref{thm:5.1}, the associated
 grain boundary energy $E(t)$ satisfies,
 \begin{equation}
  \label{eq:5.32}
  E(t)-E_\infty
   \leq
   \Cr{const:5.7}
   e^{-\lambda^{\star}t},
 \end{equation}
 for some positive constant $\Cl{const:5.7}>0$, where
 \begin{equation*}
  E_\infty
   :=
   \sigma(0)\sum_{j=1}^3|\vec{b}_\infty^{(j)}|.
 \end{equation*}
\end{corollary}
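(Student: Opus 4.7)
The plan is to bound $E(t)-E_\infty$ in terms of the perturbation variables $\vec{\alpha}_p(t)$ and $\vec{a}_p(t)$ introduced in \eqref{eq:5.12}, and then to feed in the exponential decay provided by Theorem \ref{thm:5.1}.

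First I would rewrite
\[
 E(t) - E_\infty
 = \sum_{j=1}^3 \bigl[\sigma(\Delta^{(j)}\alpha(t))\,|\vec{x}^{(j)}-\vec{a}(t)| - \sigma(0)\,|\vec{x}^{(j)}-\vec{a}_\infty|\bigr].
\]
Each summand is a $C^3$ function of $(\vec{\alpha},\vec{a})$ on a neighborhood of $(\vec{\alpha}_\infty,\vec{a}_\infty)$: $\sigma$ is $C^3$ by assumption, $\vec{a}\mapsto|\vec{x}^{(j)}-\vec{a}|$ is smooth away from $\vec{x}^{(j)}$, and $|\vec{x}^{(j)}-\vec{a}(t)|$ stays uniformly bounded below along the trajectory thanks to Lemma \ref{lem:4.1}. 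Hence $E$ is a $C^3$ function of $(\vec{\alpha},\vec{a})$ on this neighborhood.

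Next I would Taylor expand $E$ in $(\vec{\alpha}_p,\vec{a}_p)$ about $(\vec{0},\vec{0})$. Because $(\vec{\alpha}_\infty,\vec{a}_\infty)$ is the energy minimizer of $E$ (as used in the Fermat--Torricelli step of the proof of Proposition \ref{prop:5.1}), the gradient of $E$ vanishes there, so the leading term is quadratic and I obtain
\[
 0 \leq E(t) - E_\infty \leq C\bigl(|\vec{\alpha}_p(t)|^2 + |\vec{a}_p(t)|^2\bigr),
\]
with $C>0$ uniform on the neighborhood. The lower bound uses the minimizing property; the upper bound is the standard second-order Taylor remainder combined with the $C^3$ regularity of $E$.

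Finally, Theorem \ref{thm:5.1} yields $|\vec{\alpha}_p(t)|+|\vec{a}_p(t)| \leq \Cr{const:5.5}\,e^{-\lambda^\star t}$ under the same smallness hypothesis; substituting into the estimate above gives \eqref{eq:5.32} (in fact with the faster rate $2\lambda^\star$, which certainly implies the stated $\lambda^\star$ rate). The only technical point to check is that the trajectory remains in a fixed neighborhood of equilibrium on which the $C^3$ constants of $E$ are uniform, but this is automatic once the smallness constant $\Cr{eps:5.2}$ in Theorem \ref{thm:5.1} is fixed sufficiently small. I do not anticipate any real obstacle; the corollary is essentially a soft consequence of the smoothness of $E$ together with the already-established Theorem \ref{thm:5.1}.
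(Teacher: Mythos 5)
Your proof is correct, but it takes a genuinely different route from the paper's. The paper argues to first order: it splits each summand as $\sigma(0)\bigl(|\vec{b}^{(j)}(t)|-|\vec{b}^{(j)}_\infty|\bigr)+\bigl(\sigma(\Delta^{(j)}\alpha(t))-\sigma(0)\bigr)|\vec{b}^{(j)}(t)|$, bounds the first term by $\sigma(0)|\vec{a}(t)-\vec{a}_\infty|$ and the second by a Lipschitz estimate $\Cr{const:5.12}|\Delta^{(j)}\alpha(t)|\,|\vec{b}^{(j)}(t)|$, and then inserts the decay \eqref{eq:5.24}; this yields \eqref{eq:5.32} with rate $\lambda^\star$ and never uses that the equilibrium is a critical point of $E$. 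You instead exploit criticality: the equilibrium system \eqref{eq:1.3}/\eqref{eq:1.5} together with $\sigma_\theta(0)=0$ says precisely that $\nabla E(\vec{\alpha}_\infty,\vec{a}_\infty)=0$, so the second-order Taylor remainder (for which $C^2$ regularity already suffices, and which is uniform on the neighborhood guaranteed by Lemma \ref{lem:4.1}, since $|\vec{x}^{(j)}-\vec{a}(t)|>\tfrac12|\vec{b}^{(j)}_\infty|$ keeps you away from the singularities of the distance functions) gives $0\leq E(t)-E_\infty\leq C\bigl(|\vec{\alpha}_p(t)|^2+|\vec{a}_p(t)|^2\bigr)$ and hence the improved rate $2\lambda^\star$. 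What your approach buys is this sharper decay rate (and, as a byproduct, the nonnegativity of $E(t)-E_\infty$); what the paper's buys is a shorter, entirely elementary computation that does not require identifying the equilibrium as a critical point. Both establish the stated estimate.
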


\begin{proof}
 Since $\alpha^{(1)}_\infty=\alpha^{(2)}_\infty=\alpha^{(3)}_\infty$, we obtain
 \begin{equation}
  \label{eq:5.33}
  \begin{split}
   E(t)-E_\infty
   &=
   \sum_{j=1}^3
   \left(
%    (
%    1
%    +
%    \frac12
%    (\alpha^{(j-1)}(t)-\alpha^{(j)}(t))^2)
   \sigma(\Delta^{(j)}\alpha(t))
   |\vec{b}^{(j)}(t)|
   -
   \sigma(0)
   |\vec{b}_\infty^{(j)}|
   \right) \\
   &\leq
   \sum_{j=1}^3
   \left(
   \sigma(0)
   |
   \vec{b}^{(j)}(t)
   -
   \vec{b}_\infty^{(j)}
   |
   +
   \left(
   \sigma(\Delta^{(j)}\alpha(t))
   -
   \sigma(0)
   \right)
   |\vec{b}^{(j)}(t)|
   \right) \\
   &\leq
   \sum_{j=1}^3
   \left(
   \sigma(0)
   |
   \vec{a}^{(j)}(t)
   -
   \vec{a}_\infty  
   |
   +
   \left(
   \Cl{const:5.12}
   |\Delta^{(j)}\alpha(t)|  
   \right)
   |\vec{b}^{(j)}(t)|
   \right) \\
   &\leq
   \sum_{j=1}^3
   \left(
   \sigma(0)
   |
   \vec{a}^{(j)}(t)
   -
   \vec{a}_\infty   
   |
   +
   2\Cr{const:5.12}
   |\vec{b}^{(j)}(t)|
   |\vec{\alpha}(t)-\vec{\alpha}_\infty|
   \right),
  \end{split} 
\end{equation}
 where $\Cr{const:5.12}=\sup_{|\MisOriAngle|<2\Cr{eps:5.2}}|\sigma_\MisOriAngle(\MisOriAngle)|$. 
 Using the dissipation estimate \eqref{eq:3.6} and the exponential decay estimate \eqref{eq:5.24}, we obtain 
 \eqref{eq:5.32}.
\end{proof}

\begin{remark}\label{rm:5.33}
 %\par 1. Note that, based on the estimate \eqref{eq:5.33}, the dominate
  % part of the energy decay seems to be due to the
 %the triple junction effect,
% $|\vec{b}^{(j)}(t)-\vec{b}^{(j)}_\infty|$. In fact, in the numerical
% experiments, it was observed that the energy  dissipation due to
% misorientation effect is much smaller than the
 %energy dissipation due to the triple junctions effect.
\par  Note, that the obtained exponential decay to equilibrium,
see estimates (\ref{eq:5.24}) and (\ref{eq:5.32}) is obtained by
considering linearized problem, Lemma \ref{lem:linpr}. Consideration
of the nonlinear problem instead could lead to potential power laws estimates
for the decay rates.
See also
discussion  and numerical experiments in Section \ref{sec:7}.
\end{remark}

\section{Extension to Grain Boundary Network}
\label{sec:6}

In this section, we extend our results to a grain boundary network
$\{\Gamma_t^{(j)}\}_{j}$. As in, for example, \cite{Katya-Chun-Mzn},  we define the total grain
boundary energy of the network, like,
\begin{equation} \label{eq:6.4e}
 E(t)
  =
  \sum_{j}
  \sigma
  (
  \Delta^{(j)}\alpha
  )
  |\Gamma_t^{(j)}|,
\end{equation}
where $\Delta^{(j)}\alpha$ is a misorientation, a difference between the
lattice orientation of the two neighboring grains which form the grain
boundary $\Gamma^{(j)}$. Then, the energetic variational principle
implies
\begin{equation}
 \label{eq:6.4}
  \left\{
  \begin{aligned}
   v_n^{(j)}
   &=
   \mu
   \sigma
   (
   \Delta^{(j)}\alpha
   )
   \kappa^{(j)},\quad\text{on}\ \Gamma_t^{(j)},\ t>0, \\
   \frac{d\alpha^{(k)}}{dt}
   &=
   -\gamma
   \frac{\delta E}{\delta \alpha^{(k)}},
   \\
   \frac{d\vec{a}^{(l)}}{dt}
   &=
   \eta
   \sum_{\vec{a}^{(l)}\in\Gamma^{(j)}_t}   
   \left(\sigma(\Delta^{(j)}\alpha)\frac{\vec{b}^{(j)}}{|\vec{b}^{(j)}|}\right),
   \quad t>0.
  \end{aligned}
  \right.
\end{equation}

As in \cite{Katya-Chun-Mzn}, we consider the relaxation parameters,
$\mu\rightarrow\infty$, and we further assume that the energy density
$\sigma(\MisOriAngle)$ is an even function with respect to
the misorientation $\MisOriAngle= \Delta^{(j)}\alpha$, that is, the
misorientation effects are symmetric with respect to the difference
between the lattice orientations. Then, the problem \eqref{eq:6.4}
becomes,
\begin{equation}
 \label{eq:6.7} \left\{
  \begin{aligned}
   \Gamma_t^{(j)} &\ \text{is a line segment between some}\
   \vec{a}^{(l_{j,1})}\ \text{and}\ \vec{a}^{(l_{j,2})}, \\
   \frac{d\alpha^{(k)}}{dt}
   &=
   -
   \gamma
   \sum_{\substack{\text{ grain with } \alpha^{(k')}\ \text{is the
         neighbor of the grain with }\ \alpha^{(k)} \\
   \Gamma^{(j)}_t\ \text{is formed by the two grains with }\ \alpha^{(k)}\ \text{and}\ \alpha^{(k')}}}
   |\Gamma^{(j)}_t|
   \sigma_{\MisOriAngle}(\alpha^{(k)}-\alpha^{(k')}), \\
   \frac{d\vec{a}^{(l)}}{dt}
   &=
   \eta
   \sum_{\vec{a}^{(l)}\in\Gamma^{(j)}_t}   
   \left(\sigma(\Delta^{(j)}\alpha)\frac{\vec{b}^{(j)}}{|\vec{b}^{(j)}|}\right).
  \end{aligned}
  \right.
\end{equation}
To obtain the global solution of the system \eqref{eq:6.7}, we 
 assume that
there are no critical events in the system (for
  example, the critical events are disappearance of
the grains and/or grain boundaries during coarsening of the system),
and we consider an
associated energy minimizing state,
$(\alpha_\infty^{(k)},\vec{a}_\infty^{(l)})$ of \eqref{eq:6.7}. 
%with same
%numbers of the grain boundaries, single grains, and triple junctions as
%the initial configuration themselves. 
Then,
$(\alpha_\infty^{(k)},\vec{a}_\infty^{(l)})$ satisfies,
\begin{equation}
 \label{eq:6.8}
 \left\{
  \begin{aligned}
   \Gamma_\infty^{(j)}&\ \text{is a line segment between some}\
   \vec{a}^{(l_{j,1})}_\infty\ \text{and}\ \vec{a}_\infty^{(l_{j,2})}, \\
   0
   &=
   -
   \gamma
   \sum_{\substack{\text{ grain with } \alpha^{(k')}\ \text{is the
         neighbor of the grain with }\ \alpha^{(k)} \\
   \Gamma^{(j)}_t\ \text{is formed by the two grains with }\ \alpha^{(k)}\ \text{and}\ \alpha^{(k')}}}
   |\Gamma_\infty^{(j)}|
   \sigma_{\MisOriAngle}(\alpha_\infty^{(k)}-\alpha_\infty^{(k')}), \\
   \vec{0}
   &=
   \eta
   \sum_{\vec{a}_\infty^{(l)}\in\Gamma^{(j)}_\infty}   
   \left(\sigma(\Delta^{(j)}\alpha_\infty)\frac{\vec{b}_\infty^{(j)}}{|\vec{b}_\infty^{(j)}|}\right).
  \end{aligned}
  \right.
\end{equation}
Hence, the total energy $E_\infty$ of the grain boundary network
(\ref{eq:6.8}) is
\begin{equation}
 \label{eq:6.9}
 \begin{split}
  E_\infty
  &=
  \sum_{j}
%  \left(
%  1+
%  \frac12
%  \left(
%  \Delta^{(j)}\alpha_\infty
%  \right)^2
%  \right)
  \sigma(\Delta^{(j)}\alpha_\infty)
  |\vec{b}_\infty^{(j)}| 
  =
  \inf
  \biggl\{
  \sum_{j}
  \sigma(\Delta^{(j)}\alpha_\infty) 
%  \left(
%  1+
%  \frac12
%  \left(
%  \Delta^{(j)}\alpha
%  \right)^2
%  \right)
  |\vec{b}^{(j)}|
  \biggr\}.
 \end{split}
\end{equation}
\begin{remark}
Note, we assume in (\ref{eq:6.7})-(\ref{eq:6.8}) that the total number of grains, grain boundaries and triple
    junctions are the same as in the initial configuration (assumption
    of no critical events in the network).
\end{remark}
If there is a neighborhood $U^{(l)}\subset\R^2$ of $\vec{a}_\infty^{(l)}$
such that
\begin{equation}
 \label{eq:6.10}
  E_\infty
  <
  \sum_{j}
  |\vec{b}^{(j)}|
\end{equation}
for all $\vec{a}^{(l)}\in U^{(l)}$, one can obtain a priori estimate for
the triple junctions, and, hence,  obtain the time global solution of
\eqref{eq:6.7}. Note that, the assumption \eqref{eq:6.10} is related to
the boundary condition of the line segments
$\Gamma_t^{(j)}$. Further, if the energy minimizing state is unique,
then we can proceed with the same argument as in Lemma \ref{lem:4.1}, and
obtain the global solution \eqref{eq:6.7} near the energy minimizing
state.

\begin{example}
 Note that, the solution of \eqref{eq:6.8} may not be unique even though the grain
 orientations are constant (misorientation is zero). 
%For given Dirichlet condition $x^{(1)}$
 %to $x^{(4)}$, we give two equilibrium states in Figure \ref{fig:6.1}.
 %Among all triple junction point $\vec{a}_\infty^{(1)}$,
 %$\vec{a}_\infty^{(2)}$, the Herring condition, which is the third
% equation in \eqref{eq:6.8}, is hold. When
 %$\vec{x}^{(1)}\vec{x}^{(2)}=\vec{x}^{(3)}\vec{x}^{(4)}=\sqrt{3}$ and
 %$\vec{x}^{(1)}\vec{x}^{(3)}=\vec{x}^{(2)}\vec{x}^{(4)}=2$, 
%energy is less
The total grain
 boundary energy in the 
 %\emph{left}
 left plot is different from the one in the right
 plot, see Figure \ref{fig:6.1}. 
%In fact, in the left figure, the length of all segment is
 %$1$. In the right figure,
 %\begin{equation*}
%  \vec{x}^{(1)}\vec{a}_\infty^{(1)}
% =
 %\vec{x}^{(2)}\vec{a}_\infty^{(1)}
 %=
%\vec{x}^{(3)}\vec{a}_\infty^{(2)}
% =
% \vec{x}^{(4)}\vec{a}_\infty^{(2)}
 %=
 %\frac23\sqrt{3},\
 %\text{and}\
 %\vec{a}_\infty^{(1)}\vec{a}_\infty^{(2)}
 %=
 %\frac13\sqrt{3}.
 %\end{equation*}
 %Therefore, the total grain boundary energy in left figure is $5$ and
 %one in the right figure is $3\sqrt{3}$.

 \begin{figure}
  \centering
  \includegraphics[height=6cm]{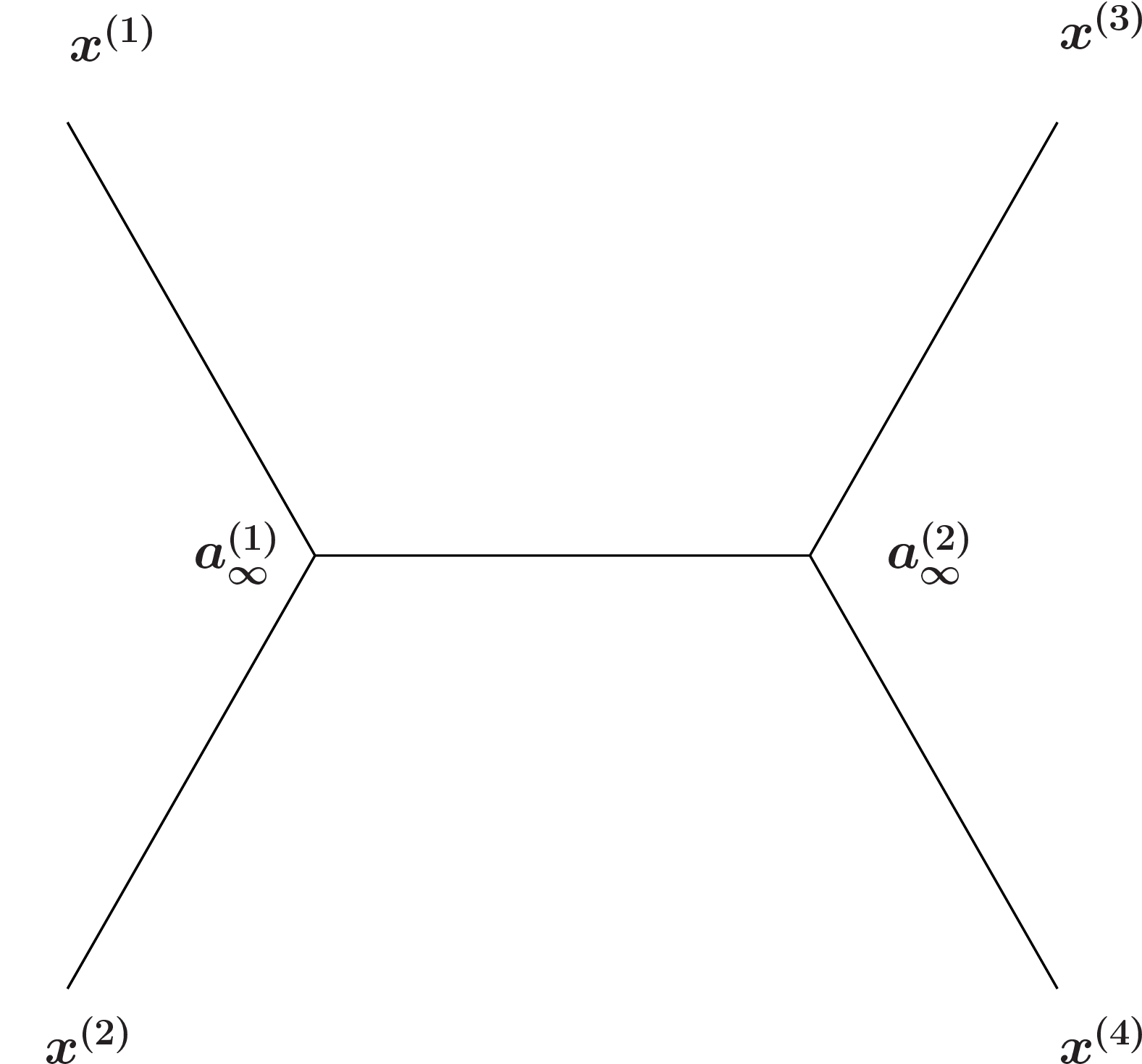}\qquad\qquad
  \includegraphics[height=6cm]{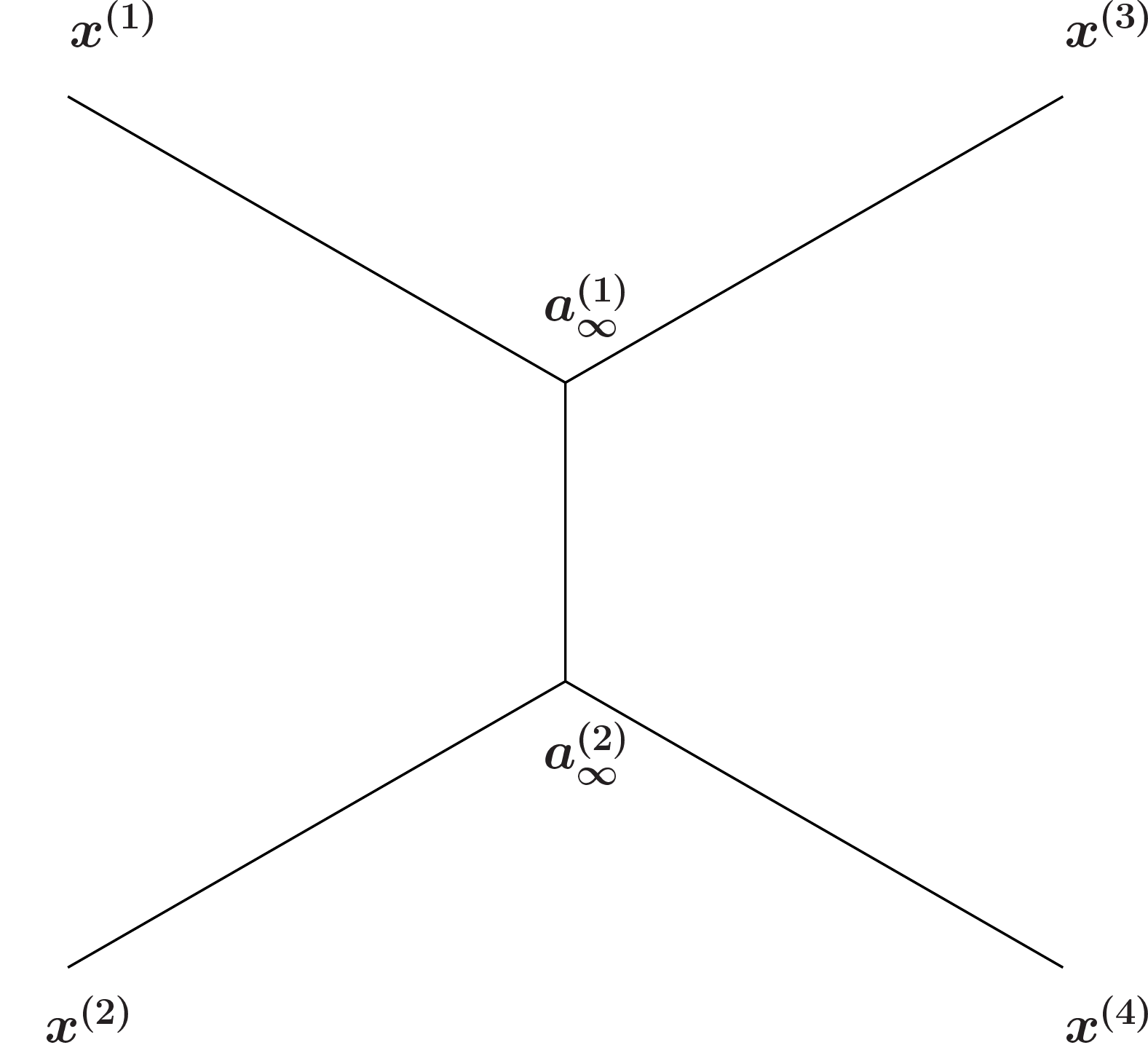}
  \caption{Even
  though, there is no misorientation effect, there are at least two equilibrium states for \eqref{eq:6.8}. 
%In general, we can not
 % guarantee whether two or more triple junctions meet each other in the
 % evolution \eqref{eq:6.7} even though the initial configuration is
 % sufficiently close to the equilibrium state.
}
  \label{fig:6.1}
 \end{figure}
 \end{example}

The asymptotics of the grain boundary networks are rather
nontrivial. Our arguments rely on the uniqueness of the equilibrium
state \eqref{eq:1.5},  but we do not know the uniqueness of solutions of
the equilibrium state for the grain boundary network
\eqref{eq:6.8}. Thus,  in general we cannot take a full limit for the
large time asymptotic behavior. Concluding the above arguments, we
have
\begin{corollary}
 In a grain boundary network \eqref{eq:6.7}, assume that the initial
 configuration is sufficiently close to an associated energy minimizing
 state \eqref{eq:6.8}.
 %, \eqref{eq:6.9}, and \eqref{eq:6.10}. 
 Then,  there
 is a global solution $(\alpha^{(k)}, \vec{a}^{(l)})$ of
 \eqref{eq:6.7}. Furthermore, there exists a time sequence
 $t_n\rightarrow\infty$ such that $(\alpha^{(k)}(t_n),
 \vec{a}^{(l)}(t_n))$ converges to an associated equilibrium
 configuration \eqref{eq:6.8}.
\end{corollary}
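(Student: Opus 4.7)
The plan is to extend the single triple junction arguments from Sections~\ref{sec:4}--\ref{sec:5} to the network setting in two stages: first establish global existence of a solution that stays close to the reference minimizing configuration $(\alpha_\infty^{(k)},\vec{a}_\infty^{(l)})$, and then extract a time sequence along which the solution approaches an equilibrium of \eqref{eq:6.8} via energy dissipation and compactness.

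\textbf{Global existence.} Local existence follows by the same contraction mapping argument as in Proposition~\ref{prop:3.3}, requiring only that every triple junction $\vec{a}^{(l)}(t)$ remain in a chosen neighborhood $U^{(l)}$ of $\vec{a}_\infty^{(l)}$ so that line segments $\Gamma_t^{(j)}$ do not degenerate. To extend the local solution globally, two a priori bounds are needed. For orientations, summing the equations of \eqref{eq:6.7} against $\alpha^{(k)}$ produces a non-positive right-hand side by the convexity hypothesis \eqref{eq:2.Assumption2}, so $|\vec{\alpha}(t)|\leq|\vec{\alpha}(0)|$ as in Proposition~\ref{prop:3.5}. The delicate step is the a priori bound on triple junctions, which generalizes Lemma~\ref{lem:4.1}. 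Setting $U=\prod_l U^{(l)}$, define
\begin{equation*}
   C_* := \inf\Bigl\{ \textstyle\sum_{j} |\vec{b}^{(j)}| \,:\, (\vec{a}^{(l)})_l \in \partial U \Bigr\}.
\end{equation*}
By the local minimizing property \eqref{eq:6.10} we may shrink each $U^{(l)}$ so that $\sum_j|\vec{b}_\infty^{(j)}|<C_*$, and by sufficient closeness of the initial datum we can ensure $E(0)<\sigma(0)\,C_*$. The network version of the energy dissipation identity (derived by differentiating \eqref{eq:6.4e} and using \eqref{eq:6.7} exactly as in Proposition~\ref{prop:3.4}) gives $E(t)\leq E(0)$. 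If some triple junction first reached $\partial U^{(l)}$ at time $t_1$, then
\begin{equation*}
  \sigma(0)\,C_* \leq \sigma(0)\textstyle\sum_j|\vec{b}^{(j)}(t_1)| \leq \sum_j \sigma(\Delta^{(j)}\alpha(t_1))|\vec{b}^{(j)}(t_1)| = E(t_1)\leq E(0)<\sigma(0)\,C_*,
\end{equation*}
a contradiction. Combined with the bound on $|\vec{\alpha}|$, the criterion in the network analog of \eqref{eq:3.11} lets the local solution be extended to all $t>0$.

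\textbf{Subsequential convergence.} The dissipation identity
\begin{equation*}
  E(t)+\frac{1}{\gamma}\int_0^t \Bigl|\frac{d\vec{\alpha}}{d\tau}\Bigr|^2 d\tau + \frac{1}{\eta}\sum_l \int_0^t \Bigl|\frac{d\vec{a}^{(l)}}{d\tau}\Bigr|^2 d\tau = E(0)
\end{equation*}
shows that $d\vec{\alpha}/dt$ and each $d\vec{a}^{(l)}/dt$ lie in $L^2(0,\infty)$, so there exists $t_n\to\infty$ with $|d\vec{\alpha}/dt|(t_n)\to 0$ and $|d\vec{a}^{(l)}/dt|(t_n)\to 0$ for every $l$. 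By the a priori bounds, the sequence $(\vec{\alpha}(t_n),(\vec{a}^{(l)}(t_n))_l)$ stays in a compact set, so after passing to a subsequence it converges to some $(\vec{\alpha}_{\infty,*},(\vec{a}_{\infty,*}^{(l)})_l)$. Passing to the limit on both sides of \eqref{eq:6.7} along this subsequence (which is justified by continuity of $\sigma$ and $\sigma_\theta$ and by the bound $|\vec{b}_{\infty,*}^{(j)}|>0$ inherited from $\vec{a}_{\infty,*}^{(l)}\in \overline{U^{(l)}}$), the limit satisfies the equilibrium system \eqref{eq:6.8}.

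\textbf{Main obstacle.} The sharpest point is the triple junction a priori estimate. In the single junction case of Lemma~\ref{lem:4.1}, the Fermat--Torricelli functional $f(\vec{a})=\sum_j|\vec{x}^{(j)}-\vec{a}|$ has a \emph{unique} global minimizer, which provides the barrier. In the network, the analog is the joint length functional $(\vec{a}^{(l)})_l\mapsto \sum_j|\vec{b}^{(j)}|$ on all triple junction positions, which is only known to be locally minimized at $(\vec{a}_\infty^{(l)})_l$; the strict local inequality \eqref{eq:6.10} is precisely what one needs to set up the barrier $C_*$ on $\partial U$. A secondary, and essential, obstruction is that \eqref{eq:6.8} is not known to have a unique solution (cf. Figure~\ref{fig:6.1}), so unlike Theorem~\ref{thm:5.1} one cannot linearize and obtain full convergence — only subsequential convergence can be claimed.
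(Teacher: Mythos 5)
Your proposal is correct and follows essentially the same route the paper takes: the barrier argument for the triple junctions built from the local minimality condition \eqref{eq:6.10} together with energy dissipation (generalizing Lemma \ref{lem:4.1}), the maximum principle for the orientations, and subsequential convergence via the $L^2$-in-time bounds on $d\vec{\alpha}/dt$ and $d\vec{a}^{(l)}/dt$ plus compactness. You also correctly identify, as the paper does, that non-uniqueness of solutions to \eqref{eq:6.8} is exactly what prevents upgrading to full convergence.
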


\section{Numerical Experiments}\label{sec:7}
\begin{figure}[hbtp]
\centering
%\begin{tabular}{cc}
%(a) & (b)\\
\vspace{-2cm}
\includegraphics[width=3.0in]{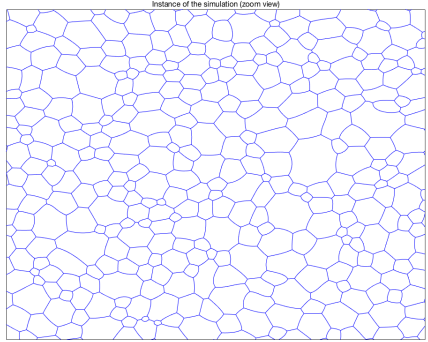}
\vspace{-2cm}
%\end{tabular}
\caption{\footnotesize Time instance from the 2D grain growth
  simulation with time-dependent orientation (zoom view). }\label{fig5}
\end{figure}
Here, we present several numerical experiments to illustrate the effects of
the dynamic orientations/misorientations (grains ``rotations'')  and of the mobility of the
triple junctions, as described in Sections \ref{sec:5}-\ref{sec:6}.  In
particular, the main goal of our numerical experiments is to
illustrate the time scales effect of the mobility of the triple junctions $\eta$
and the misorientations $\gamma$ on how the grain boundary
system decays energy and coarsens with time. For that we will study
numerically evolution of the total grain boundary energy $E(t)$
(\ref{eq:6.4e}), 
\begin{equation}\label{eq:7.1}
 E(t)
  =
  \sum_{j}
  \sigma
  (
  \Delta^{(j)}\alpha
  )
  |\Gamma_t^{(j)}|,
\end{equation}
where as before, $\Delta^{(j)}\alpha$ is a misorientation of the grain
boundary $\Gamma^{(j)}$, and  $|\Gamma^{(j)}|$ is the length of the
grain boundary. We will also consider the growth of the average area, defined as,
\begin{equation}\label{eq:7.2}
A(t)=\frac{4}{N(t)},
\end{equation}
here $4$ is the total area of the sample, and $N(t)$ is the total
number of grains at time $t$. The growth of the average area is
closely related to the coarsening rate of the grain system that
undergoes critical/disappearance events. However, it is important to
note that critical events not only include grain disappearance, but
also include  facet/grain boundary disappearance, facet interchange, splitting of unstable junctions. Further,
we will investigate the steady-state distribution of the grain boundary character
distribution (GBCD) $\rho(\Delta ^{(j)} \alpha)$.
The GBCD (in our context) is an empirical
statistical measure of the relative length (in 2D) of the grain
boundary interface with a given lattice misorientation,
\begin{eqnarray}
&\rho(\Delta ^{(j)} \alpha, t) =\mbox{ relative length of interface of
  lattice misorientation } \Delta ^{(j)} \alpha \mbox{ at time }t,\label{eq:7.3}\\
&\mbox{ normalized so that } \int_{\Omega_{\Delta ^{(j)} \alpha}} \rho
  d \Delta ^{(j)} \alpha=1, \nonumber
\end{eqnarray}
where we consider $\Omega_{\Delta ^{(j)} \alpha}=[-\frac{\pi}{4},
\frac{\pi}{4}]$ in the numerical experiments below (for planar grain
boundary network, it is reasonable to consider such range for the
misorientations). For more details, see for example \cite{DK:gbphysrev}. In all our tests below,
we compare steady-state GBCD to the stationary solution of the
Fokker-Planck equation, the  Boltzmann distribution for the grain
boundary energy density $\sigma
  (
  \Delta^{(j)}\alpha
  )$,
\begin{equation} \begin{aligned} \label{eq:7.4}
& \rho_D(\Delta\alpha^{(j)}) = \frac{1}{Z_D}e^{-\frac{\sigma(\Delta\alpha^{(j)})}{D}}, \\
& \textrm{with partition function, i.e.,normalization factor} \\
&Z_D = \int_{\Omega_{\Delta\alpha^{(j)}}}
e^{-\frac{\sigma(\Delta\alpha^{(j)})}{D}}d\Delta \alpha^{(j)},
\end{aligned}\end{equation}
\cite{DK:BEEEKT,DK:gbphysrev,MR2772123,MR3729587}. We employ
Kullback-Leibler relative entropy test to obtain a unique
``temperature-like'' parameter $D$ and to construct the corresponding
Boltzmann distribution for the steady-state GBCD as it was originally done in \cite{DK:BEEEKT,DK:gbphysrev,MR2772123,MR3729587}.
Note, that the GBCD is a primary candidate to characterize texture of the grain boundary
network, and is inversely related to the grain boundary energy density
as discovered in experiments and simulations. 
%In recent work
%\cite{DK:BEEEKT,DK:gbphysrev,MR2772123,MR3729587}, the theory was
%developed that the empirical texture statistic GBCD evolves as the
%solution to Fokker-Planck equation. 
The
reader can consult, for example, \cite{DK:BEEEKT,DK:gbphysrev,
  MR2772123, MR3729587,barmak_grain_2013} for more details about GBCD and the theory of
the GBCD. 
 In the numerical experiments in this paper, we consider the
grain boundary energy density as plotted in Figure \ref{gbend} and given below,
\[\sigma(\Delta \alpha^{(j)})=1+0.25\sin^2(2\Delta \alpha^{(j)}).\]
\par  We consider simulation of 2D grain
boundary network using further extension of the algorithm based on sharp interface
approach  \cite{MR2772123,MR3729587} (note, that in
\cite{MR2772123,MR3729587},  only Herring conditions at
triple junctions were considered, i.e., $\eta\to \infty$, and dynamic
orientations/misorientations (``rotation of grains'') was absent, i.e.,
$\gamma=0$). We recall that in the numerical scheme
we work with a variational principle. The cornerstone of the algorithm, which assures its
stability, is the discrete dissipation inequality for the total grain boundary energy that holds when
either the discrete Herring boundary condition ($\eta \to \infty$) or  discrete ``dynamic boundary
condition'' (finite mobility $\eta$ of the triple junctions,  third
equation of (\ref{eq:6.4})) is
satisfied at the triple junctions. We also recall that in the numerical algorithm we impose
Mullins theory (first equation of (\ref{eq:6.4})) as the local
evolution law for the grain boundaries (and the time scale $\mu$ is
kept finite).  For more details about computational model
based on Mullins equations (curvature driven growth), the reader can
consult, for example \cite{MR2772123,MR3729587}.

\par In all the numerical tests below we initialized our system with
$10^4$ cells/grains with normally distributed misorientation angles at
initial time $t=0$. We also assume that the final time of the
simulations $T_{\infty}$ is the time when
approximately $80\%$ of grains disappeared from the system, namely the
time when only about $2000$ cells/grains remain. The final time is
selected based on the system with no dynamic misorientations
($\gamma=0$) and with Herring condition at the triple junctions ($\eta
\to \infty$)
and, it is selected to ensure that statistically significant number of grains still remain
in the system and
the system reached its statistical stead-state. Therefore, all the
numerical results which are presented below are for the grain boundary
system that undergoes critical/disappearance events. 
\par In the first series of tests, we study the effect of the triple
junctions dynamics on the dissipation and coarsening of the system,
see Figures. \ref{fig8a}-\ref{fig11a} (finite mobility $\eta$) and
Figures \ref{fig14a}-\ref{fig15a} (Herring condition, $\eta \to
\infty$) and misorientation parameter $\gamma$ is set to $1$. We observe that for smaller
values of the mobility of the triple junctions $\eta$, the energy
decay $E(t)$ is well-approximated by an exponential decay, see Figure
\ref{fig8a} (left plot) which is consistent with the results of our
theory, see Section \ref{sec:5} and energy decay (\ref{eq:5.32}), even
though, the theoretical results are obtained under assumption of no
critical events.  In comparison, we also present fit to a power law decaying
function, see Figure \ref{fig8a} (right plot). The power law function
does not seem to give as good approximation in this case, due to the
appearance 
of the negative term in the fitted power function. However, for larger values of $\eta$, Figure \ref{fig10a} (left
plot) and for Herring condition
($\eta\to \infty$), Figure \ref{fig14a} (see also Figures
\ref{fig12a} and
\ref{fig16a} with different values of $\gamma$) (left plots),  we obtain that the total grain boundary energy does not
follow exponential decay,  and the sum of exponential functions or power law
functions give better description of the energy decay, see Figure \ref{fig14a} (see also Figures
\ref{fig12a} and
\ref{fig16a} with different values of $\gamma$). Note
also, that the numerically observed energy decay rates increase with the
mobility $\eta$  of the triple junctions which is also consistent with
the developed theory, see Sections \ref{sec:5}-\ref{sec:6}. In
addition, we observe that the average area grows as quadratic function in
time for the finite mobility $\eta$ of the triple junctions, Figures
\ref{fig9a} and \ref{fig11a} (left plots) but it exhibits a linear
growth with $\eta\to \infty$, Herring condition at the triple
junctions, Figure  \ref{fig15a} (see also Figures \ref{fig13a} and
\ref{fig17a} with different values of $\gamma$)
(left plots).  We also observe that the coarsening rate of grain
systems slows down with the
smaller $\eta$,  see Figures for the growth of the average area
\ref{fig9a}, \ref{fig11a},  \ref{fig15a} (see also Figures \ref{fig13a} and
\ref{fig17a} with different values of $\gamma$) (left plots). The
observations about the growth of the average area and the coarsening
rate can be explained by  noting that with Herring condition at the
triple junctions ($\eta \to \infty$), the grain growth is driven mainly
 by the kinetics of grain boundaries, but not the triple
 junctions. And,  the  von Neumann-Mullins $n-6$ rule for the area of
 $n$-sided grain is satisfied approximately (due to anisotropy) in that case. With the finite mobility $\eta$ of the triple junctions,
 triple junctions dynamics play an important role on the grain
 growth. Hence, finite mobility of the triple junctions can result in the
 growth of ``smaller'' grains (with sides less than $6$) and, at the
 same time, it can result in the disappearance of the ``larger''
 grains (with sides greater than $6$), namely, the von Neumann-Mullins
 $n-6$ rule will not be valid in the same way anymore. We also note that the energy
 decay in our numerical tests is consistent with the growth of the average area.
Moreover, we observe that dynamics of the triple junctions 
show some effect on the steady-state GBCD, in particular, slight
decrease in the diffusion coefficient/ in the ``temperature'' like
parameter $D$ with increase of $\eta$,  but not a significant change,  Figures
\ref{fig9a}, \ref{fig11a} and \ref{fig15a}
(right plots), (note,  the ``temperature'' like
parameter $D$   accounts for various critical events--grains
disappearance, facet/grain boundary disappearance, facet interchange, splitting of unstable junctions).
%We also note that  the ``temperature'' like
%parameter $D$  is related to various critical events in the system
%(grain disappearance, facet/grain boundary  disappearance, facet
%interchange, splitting of unstable junctions) 
\begin{figure}[hbtp]
\centering
%\begin{tabular}{cc}
%(a) & (b)\\
\vspace{-2.cm}
\includegraphics[width=3.1in]{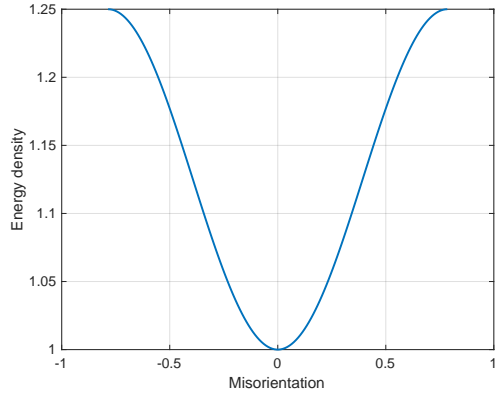}
\vspace{-2.cm}
\caption{\footnotesize Grain boundary energy density function $\sigma(\Delta \alpha)$.}\label{gbend}
\end{figure}
\begin{figure}[hbtp]
\centering
%\begin{tabular}{cc}
%(a) & (b)\\
\vspace{-2.cm}
\includegraphics[width=3.1in]{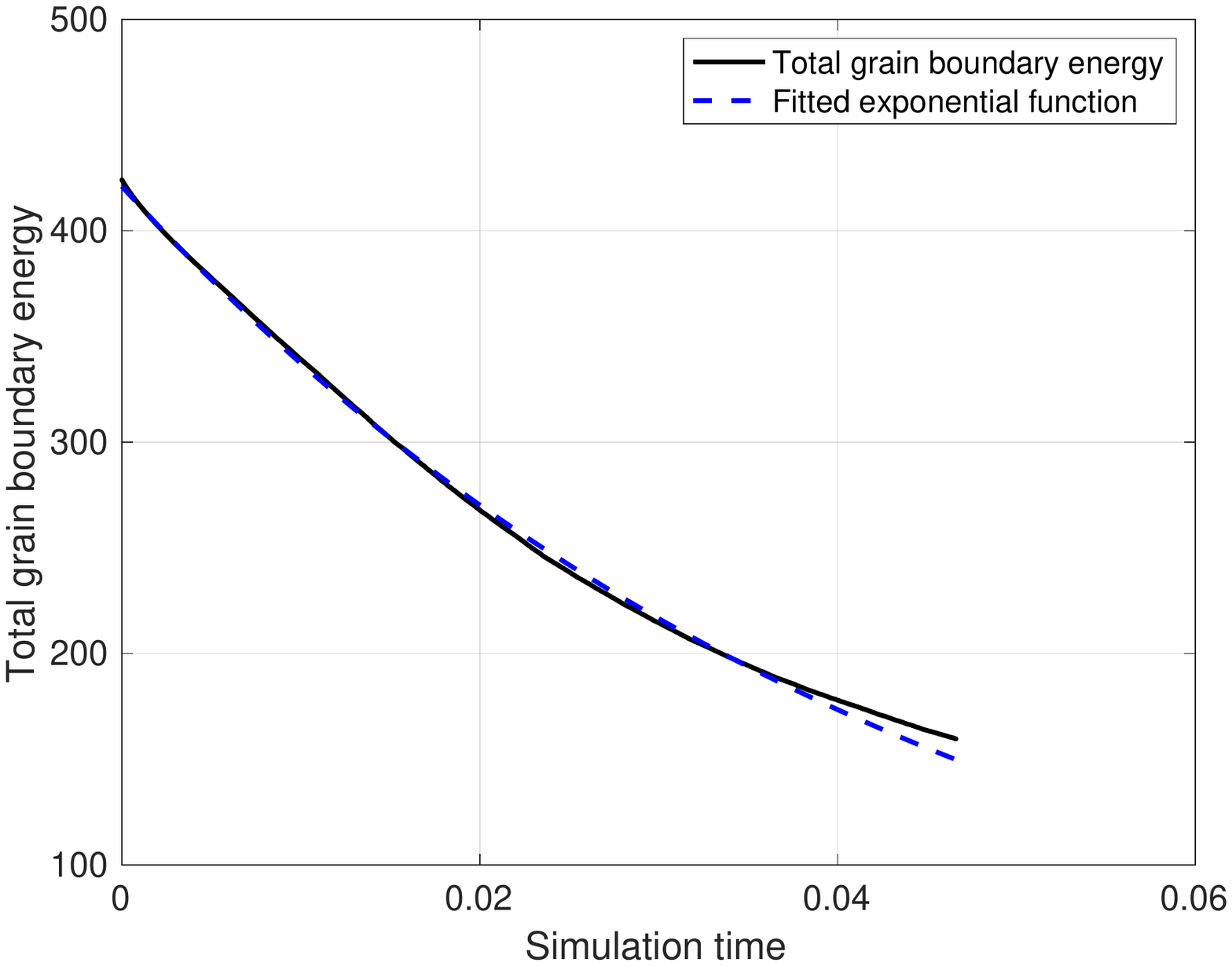}
\includegraphics[width=3.1in]{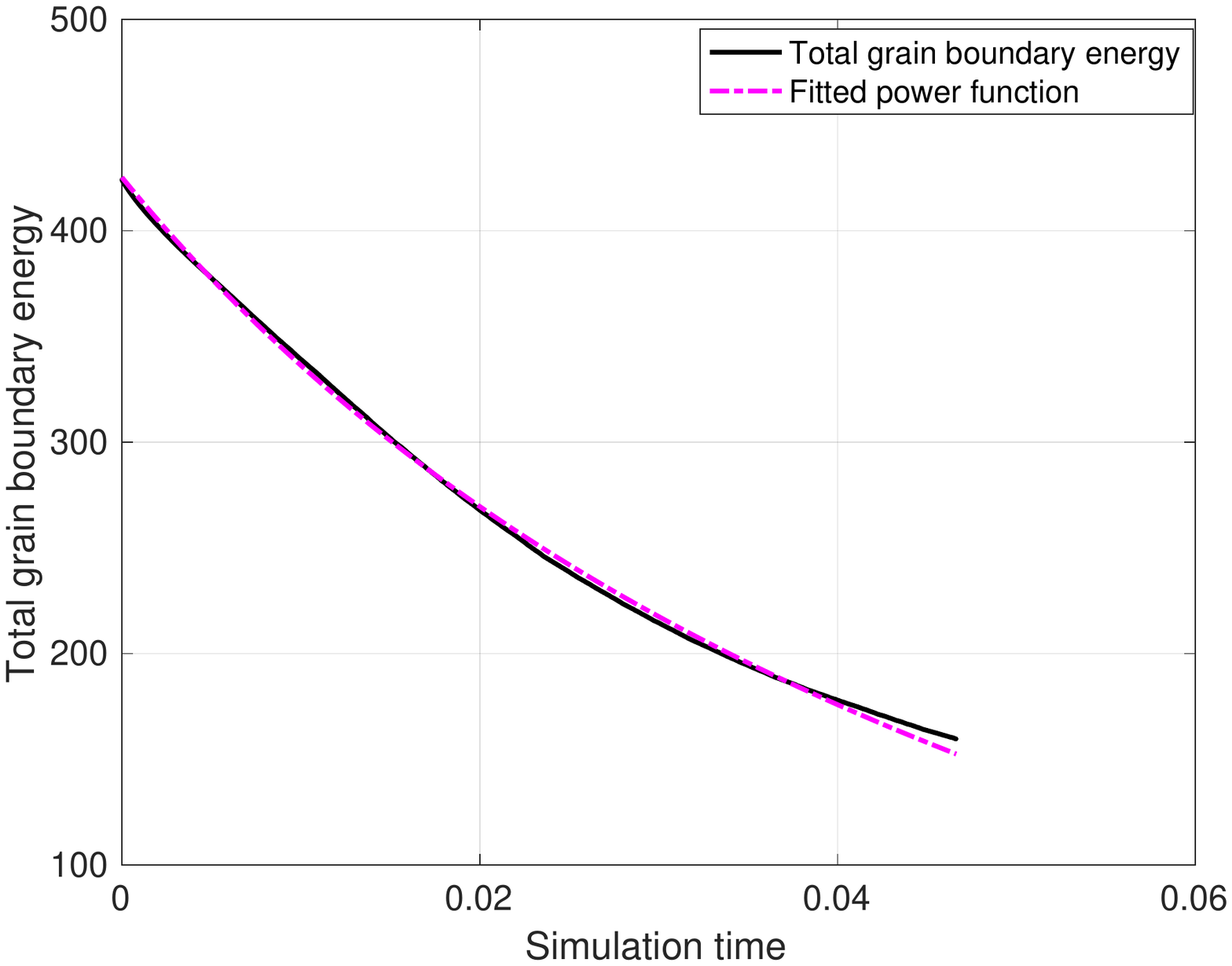}
\vspace{-2.cm}
%\end{tabular}
\caption{\footnotesize One run of $2$D trial with $10000$ initial
  grains: {\it (a) Left plot,} Total grain boundary energy plot
  (solid black) versus fitted  exponential decaying function
  $y(t)=421\exp(-22.16t)$ (dashed blue); {\it (b) Right
    plot}, Total grain boundary energy plot
  (solid black) versus fitted power law decaying function
  $y_1(t)=-201.4+626.67(1.0+16.53t)^{-1}$ (dashed magenta).
 Mobility of the triple
junctions is 
$\eta=10$ and the misorientation parameter $\gamma=1$.}\label{fig8a}
\end{figure}

\begin{figure}[hbtp]
\centering
%\begin{tabular}{cc}
%(a) & (b)\\
\vspace{-2.cm}
\includegraphics[width=3.1in]{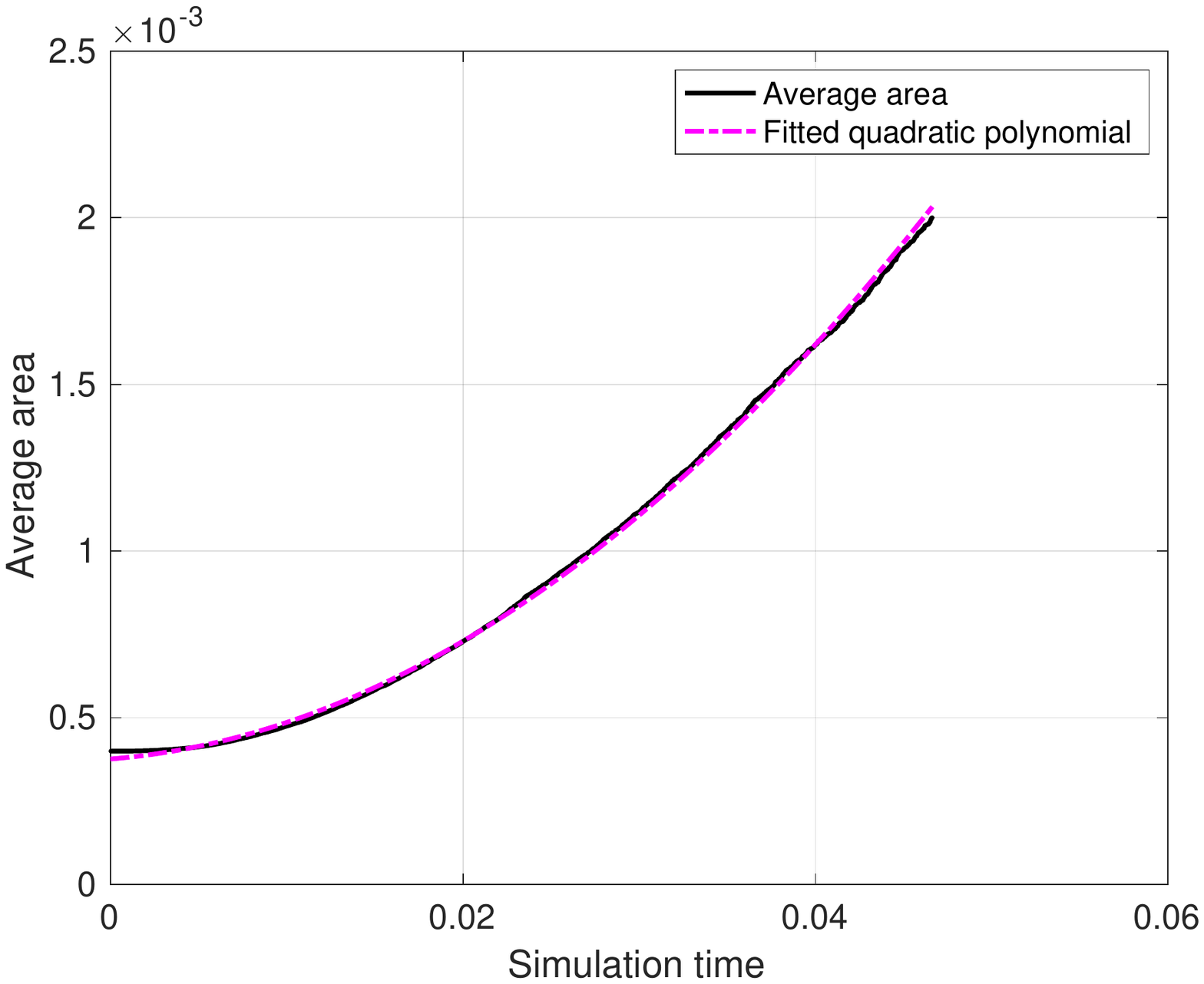}
\includegraphics[width=3.1in]{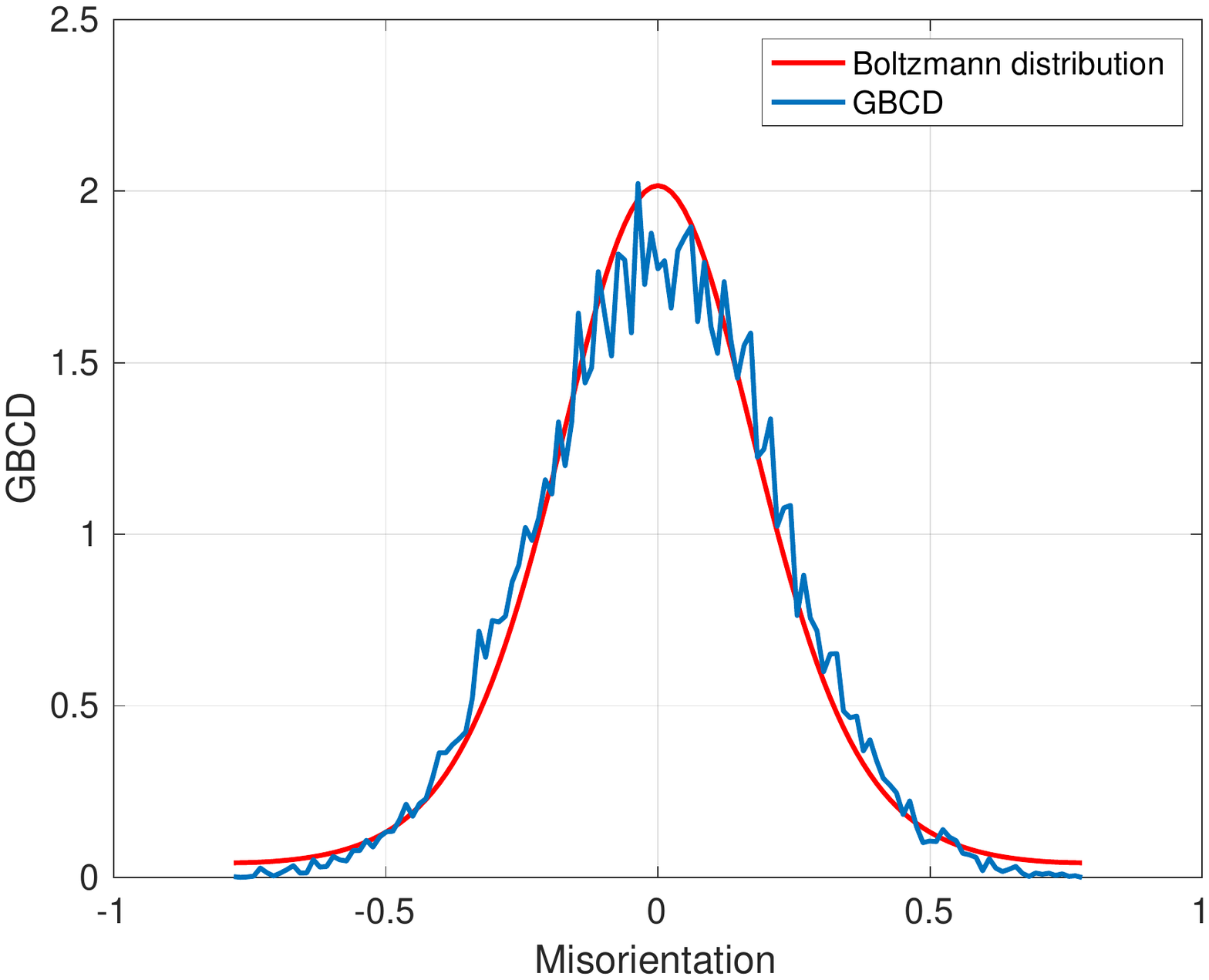}
\vspace{-2.cm}
%\end{tabular}
\caption{\footnotesize {\it (a) Left plot,} One run of $2$D trial with $10000$ initial
  grains: Growth of the average area of the
  grains (solid black) versus fitted  quadratic polynomial function
  $y(t)=0.6704t^2+0.004265t+0.0003764$ (dashed magenta).  
%The growth of the average area is
%consistent with the energy decay, Fig.~\ref{fig8a};
 {\it (b) Right plot},
  steady-state GBCD (blue curve) averaged over 3 runs of $2$D trials with $10000$ initial
  grains versus Boltzmann distribution with ``temperature''-
$D\approx 0.0650$ (red curve). Mobility of triple junctions is $\eta=10$ the misorientation parameter $\gamma=1$.}\label{fig9a}
\end{figure}

\begin{figure}[hbtp]
\centering
%\begin{tabular}{cc}
%(a) & (b)\\
\vspace{-2.cm}
\includegraphics[width=3.1in]{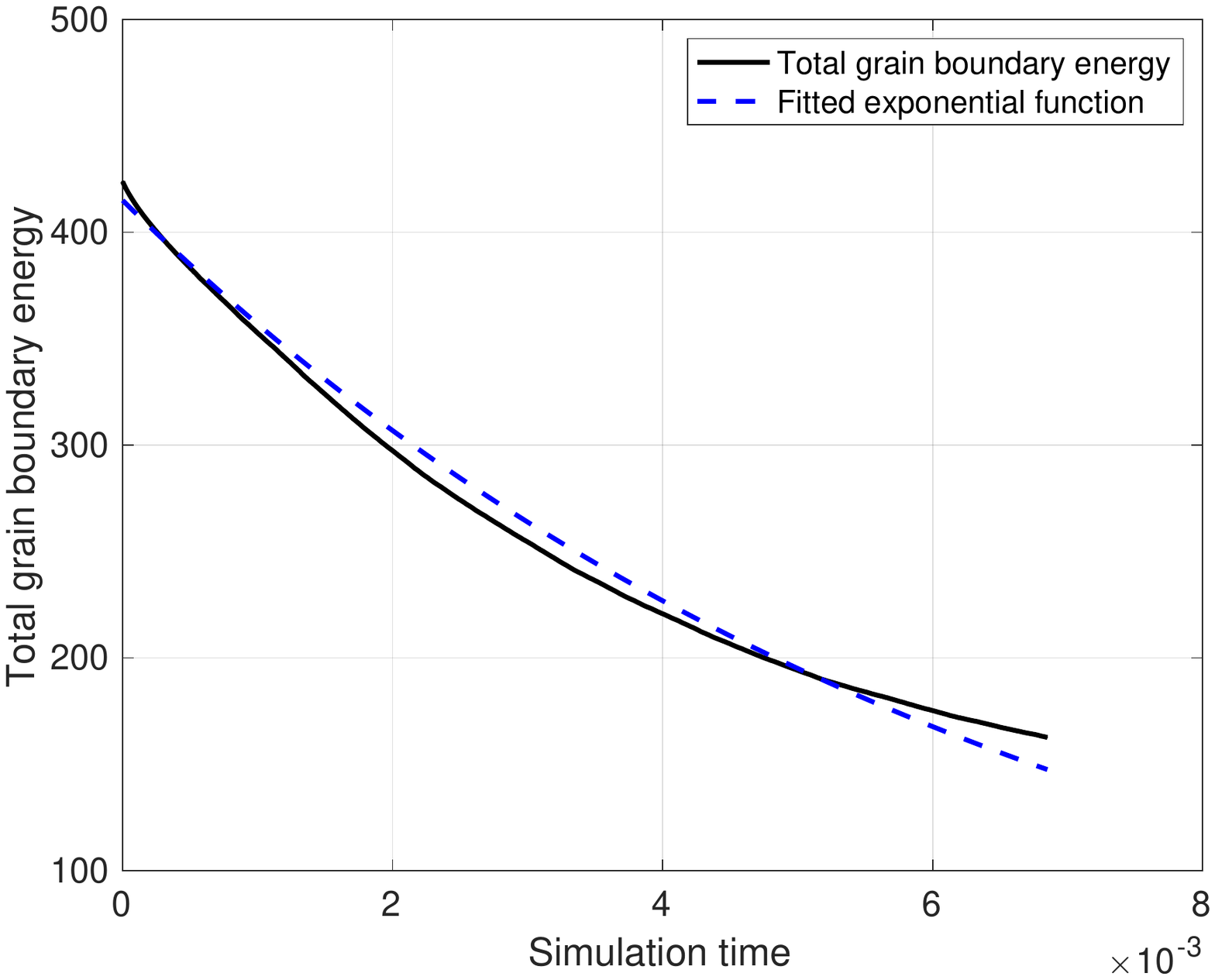}
\includegraphics[width=3.1in]{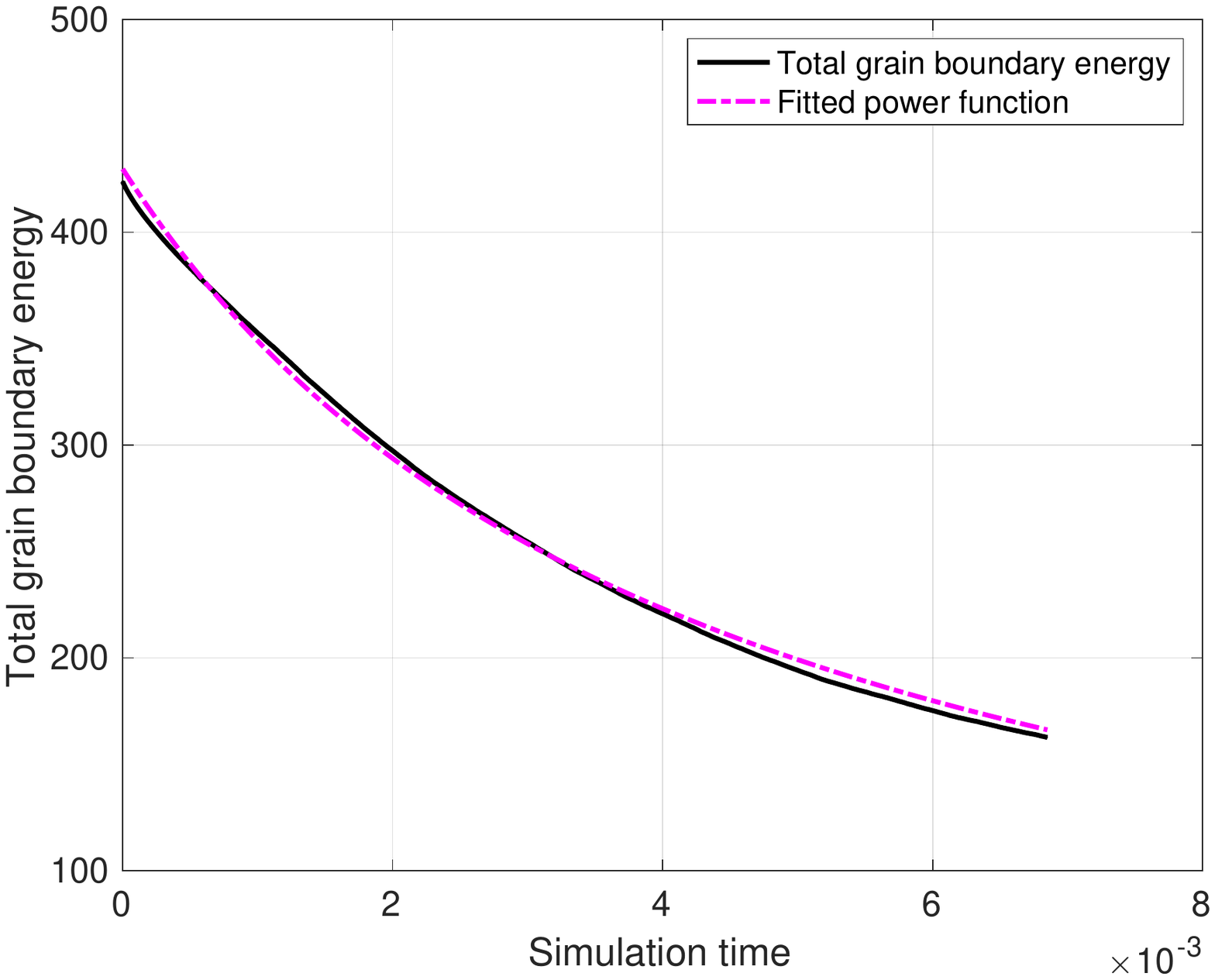}
\vspace{-2.cm}
%\end{tabular}
\caption{\footnotesize One run of $2$D trial with $10000$ initial
  grains: {\it (a) Left plot,} Total grain boundary energy plot
  (solid black) versus fitted  exponential decaying function
  $y(t)=415\exp(-151t)$ (dashed blue); {\it (b) Right
    plot}, Total grain boundary energy plot
  (solid black) versus fitted power law decaying function
  $y_1(t)=429.8286(1.0+231.5887t)^{-1}$ (dashed magenta).
 Mobility of the triple
junctions is 
$\eta=100$ and the misorientation parameter $\gamma=1$.}\label{fig10a}
\end{figure}

\begin{figure}[hbtp]
\centering
%\begin{tabular}{cc}
%(a) & (b)\\
\vspace{-2.cm}
\includegraphics[width=3.1in]{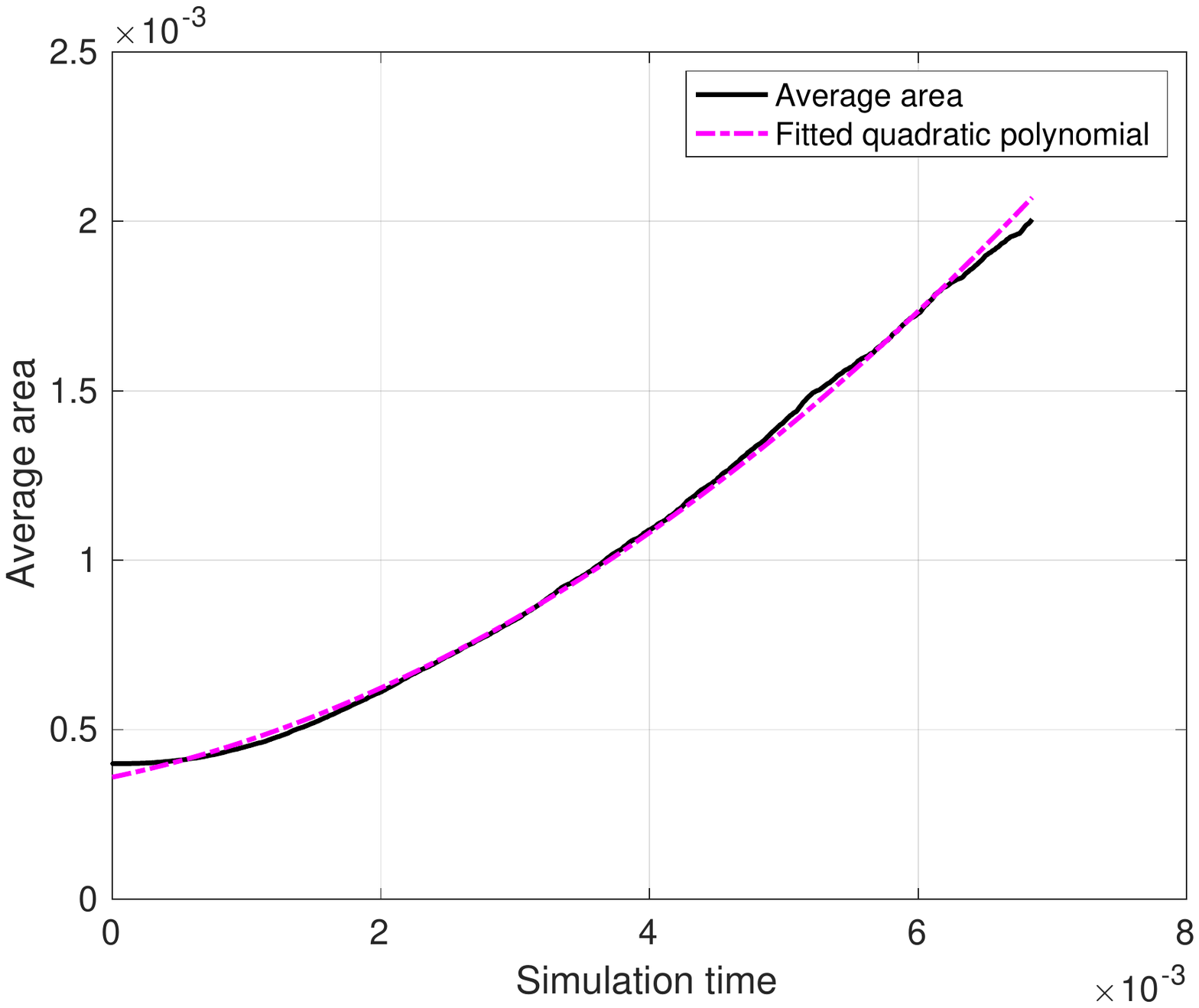}
\includegraphics[width=3.1in]{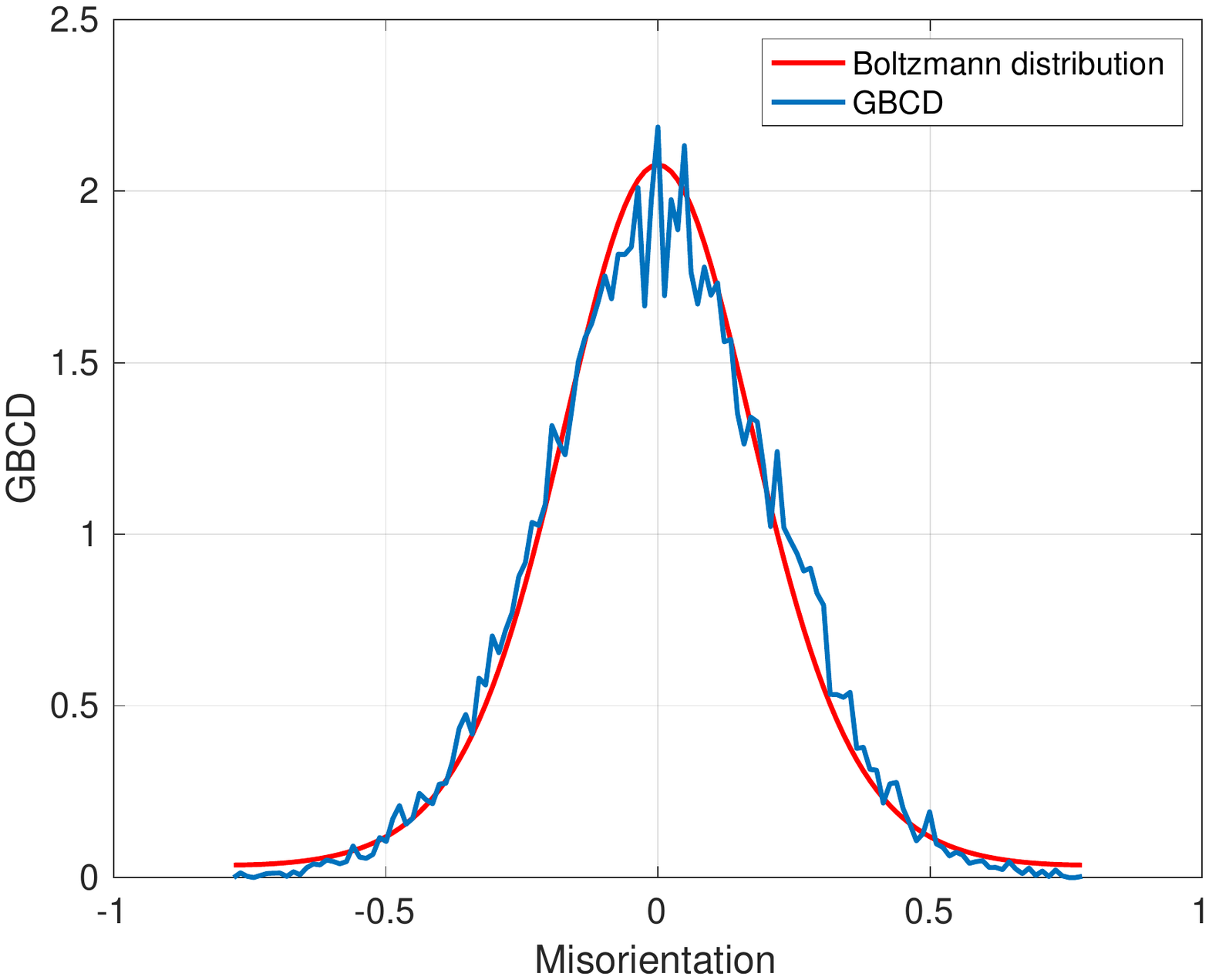}
\vspace{-2.cm}
%\end{tabular}
\caption{\footnotesize {\it (a) Left plot,} One run of $2$D trial with $10000$ initial
  grains: Growth of the average area of the
  grains (solid black) versus fitted  quadratic polynomial function
  $y(t)=24.34t^2+0.083t+0.00036$ (dashed magenta).  
%The growth of the average area is
%consistent with the energy decay, Fig.~\ref{fig8a};
 {\it (b) Right plot},
  steady-state GBCD (blue curve) averaged over 3 runs of $2$D trials with $10000$ initial
  grains versus Boltzmann distribution with ``temperature''-
$D\approx 0.0618$ (red curve). Mobility of triple junctions is
$\eta=100$ and the misorientation parameter $\gamma=1$.}\label{fig11a}
\end{figure}

\begin{figure}[hbtp]
\centering
%\begin{tabular}{cc}
%(a) & (b)\\
\vspace{-2.cm}
\includegraphics[width=3.1in]{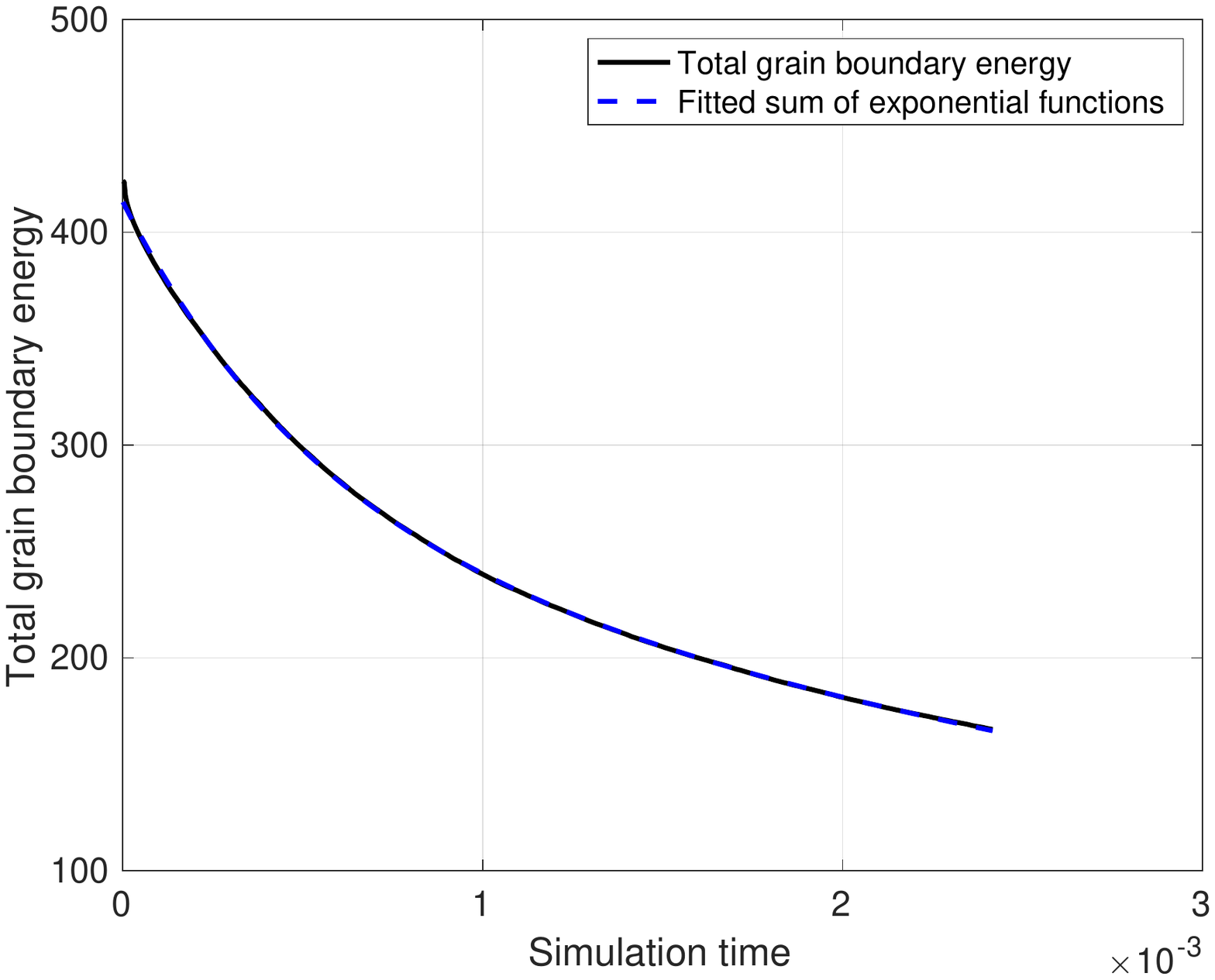}
\includegraphics[width=3.1in]{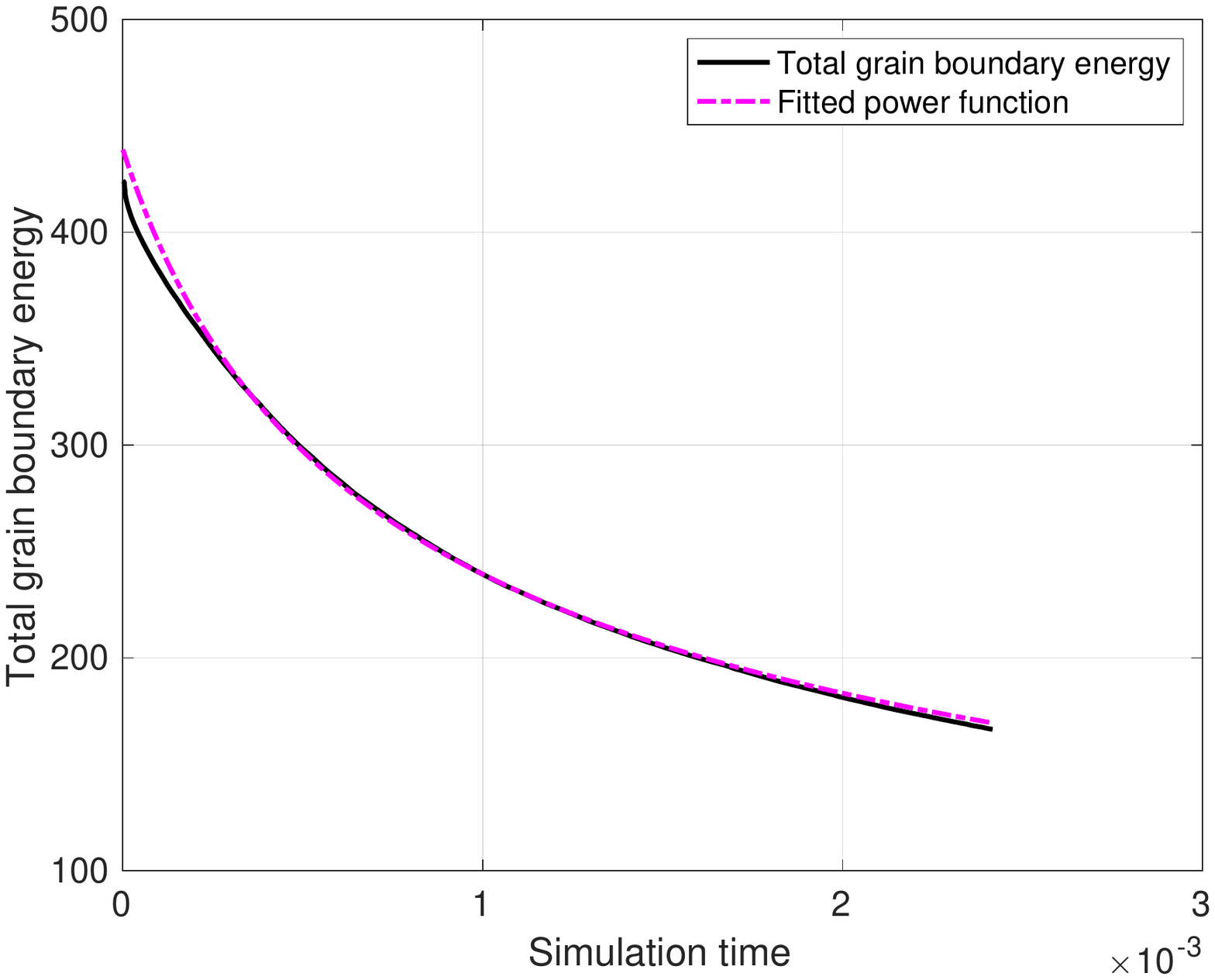}
\vspace{-2.cm}
%\end{tabular}
\caption{\footnotesize One run of $2$D trial with $10000$ initial
  grains: {\it (a) Left plot,} Total grain boundary energy plot
  (solid black) versus fitted  sum of exponential decaying functions
  $y(t)=262.6\exp(-194.6t)+151.9\exp(-1868t)$ (dashed blue); {\it (b) Right
    plot}, Total grain boundary energy plot
  (solid black) versus fitted power law decaying function
  $y_1(t)=439.3001(1.0+2369.2t)^{-0.5}$ (dashed magenta). Herring Condition is
imposed at the triple junctions $\eta\to \infty$
 and no ``dynamic'' misorientation.}\label{fig12a}
\end{figure}

\begin{figure}[hbtp]
\centering
%\begin{tabular}{cc}
%(a) & (b)\\
\vspace{-2.cm}
\includegraphics[width=3.1in]{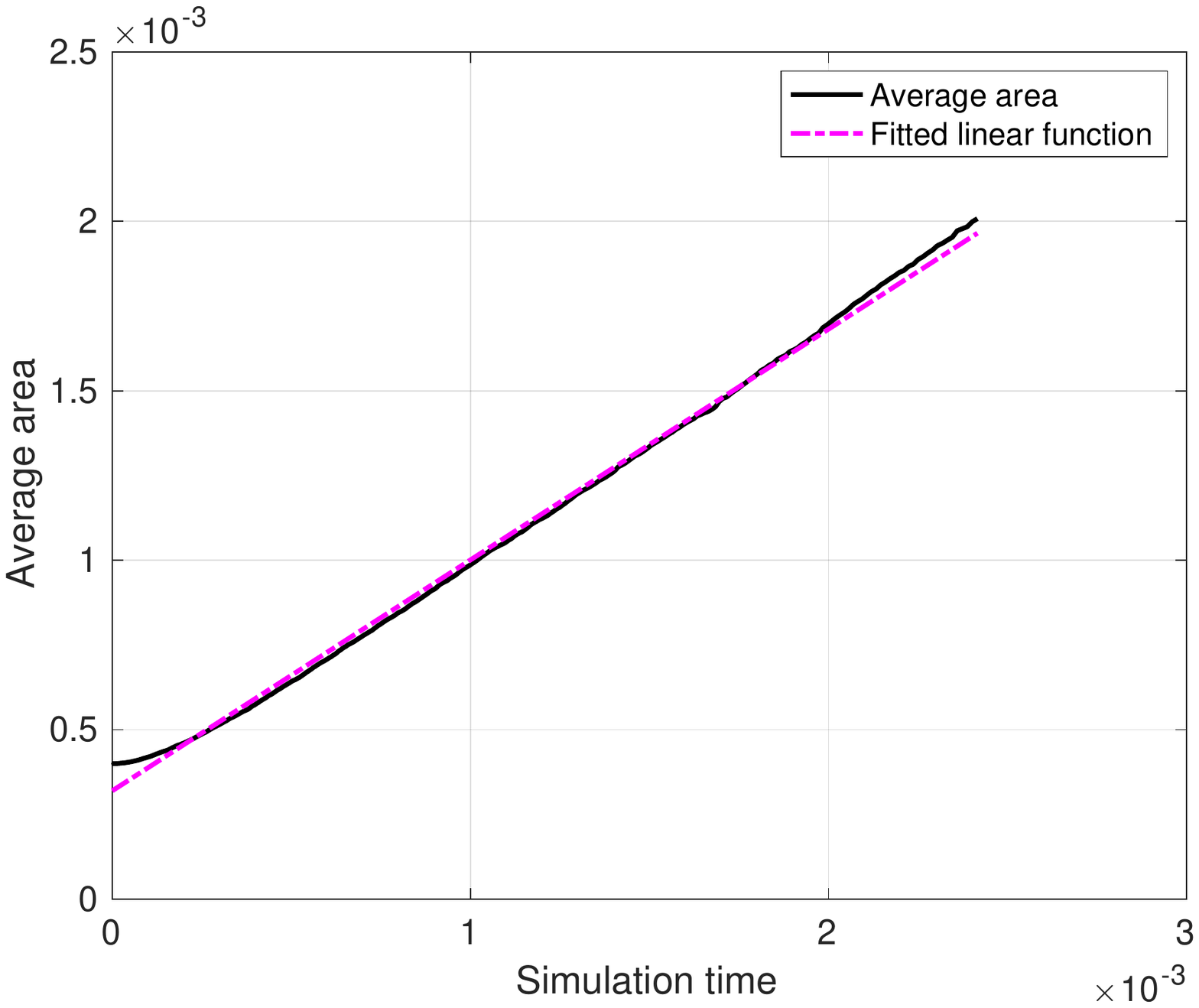}
\includegraphics[width=3.1in]{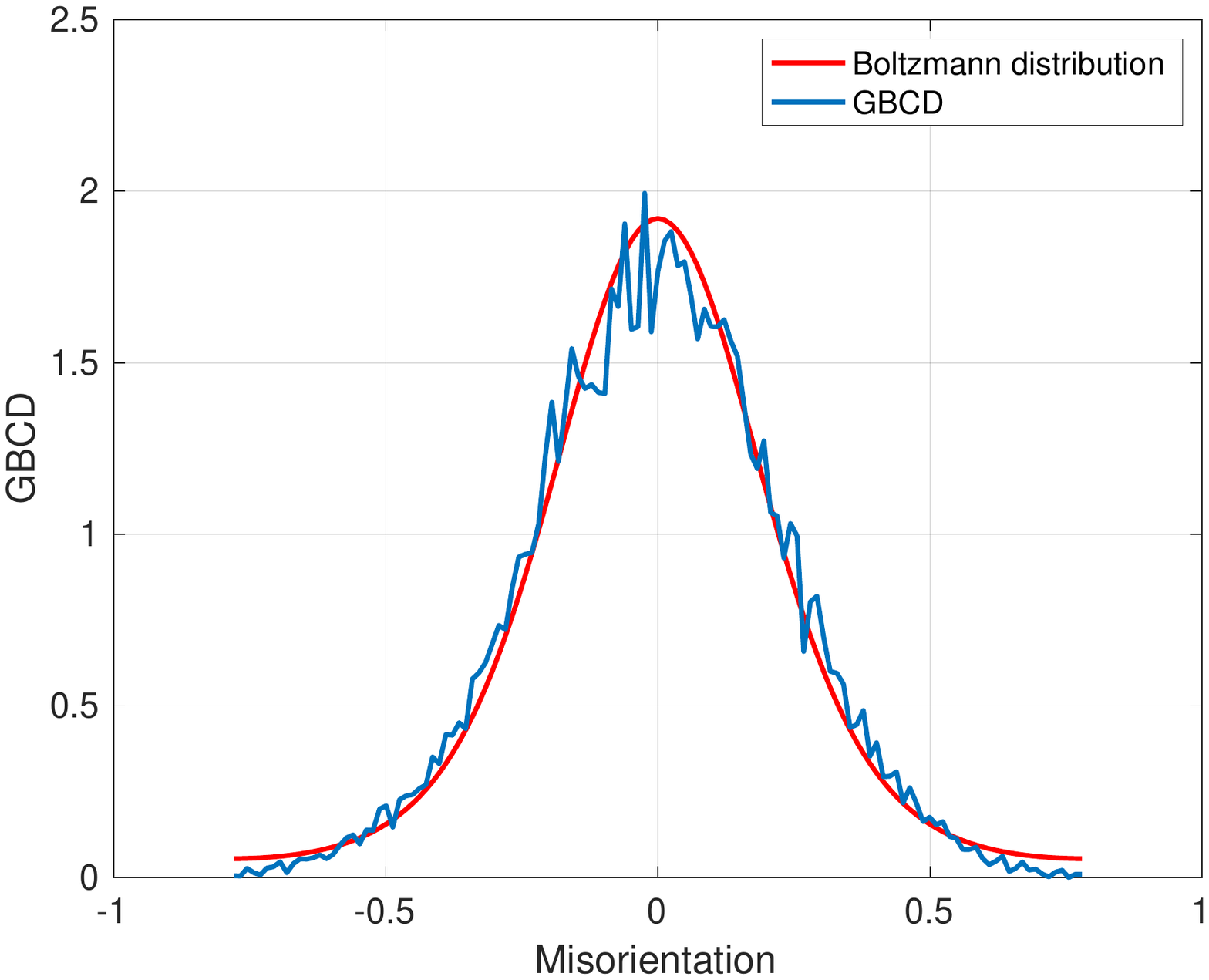}
\vspace{-2.cm}
%\end{tabular}
\caption{\footnotesize {\it (a) Left plot,} One run of $2$D trial with $10000$ initial
  grains: Growth of the average area of the
  grains (solid black) versus fitted  linear function
  $y(t)=0.6811t+0.0003198$ (dashed magenta).  
%The growth of the average area is
%consistent with the energy decay, Fig.~\ref{fig8a};
 {\it (b) Right plot},
  steady-state GBCD (blue curve) averaged over 3 runs of $2$D trials with $10000$ initial
  grains versus Boltzmann distribution with ``temperature''-
$D\approx 0.0704$ (red curve).  Herring Condition is
imposed at the triple junctions $\eta\to \infty$
 and no ``dynamic'' misorientation.}\label{fig13a}
\end{figure}

\begin{figure}[hbtp]
\centering
%\begin{tabular}{cc}
%(a) & (b)\\
\vspace{-2.cm}
\includegraphics[width=3.1in]{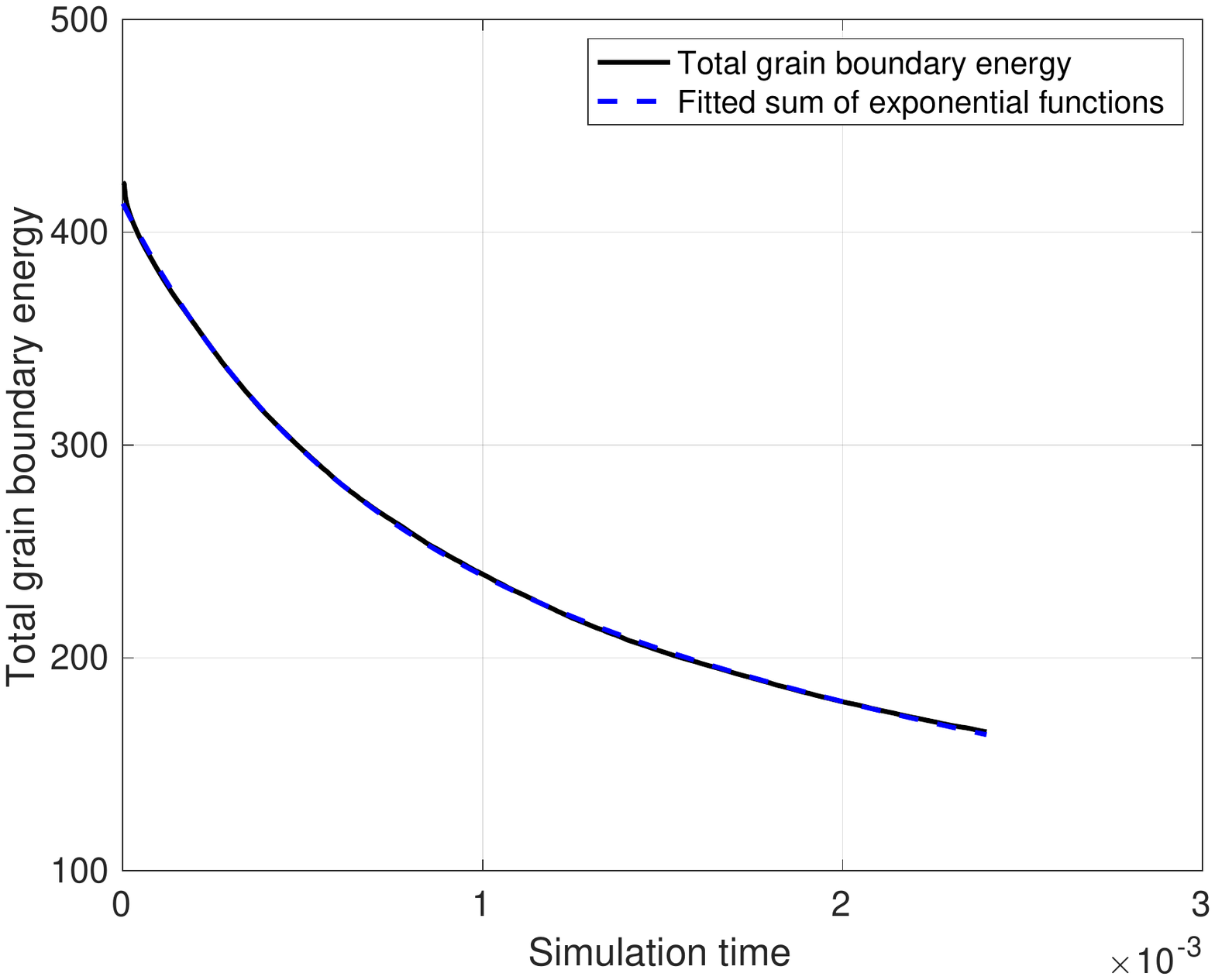}
\includegraphics[width=3.1in]{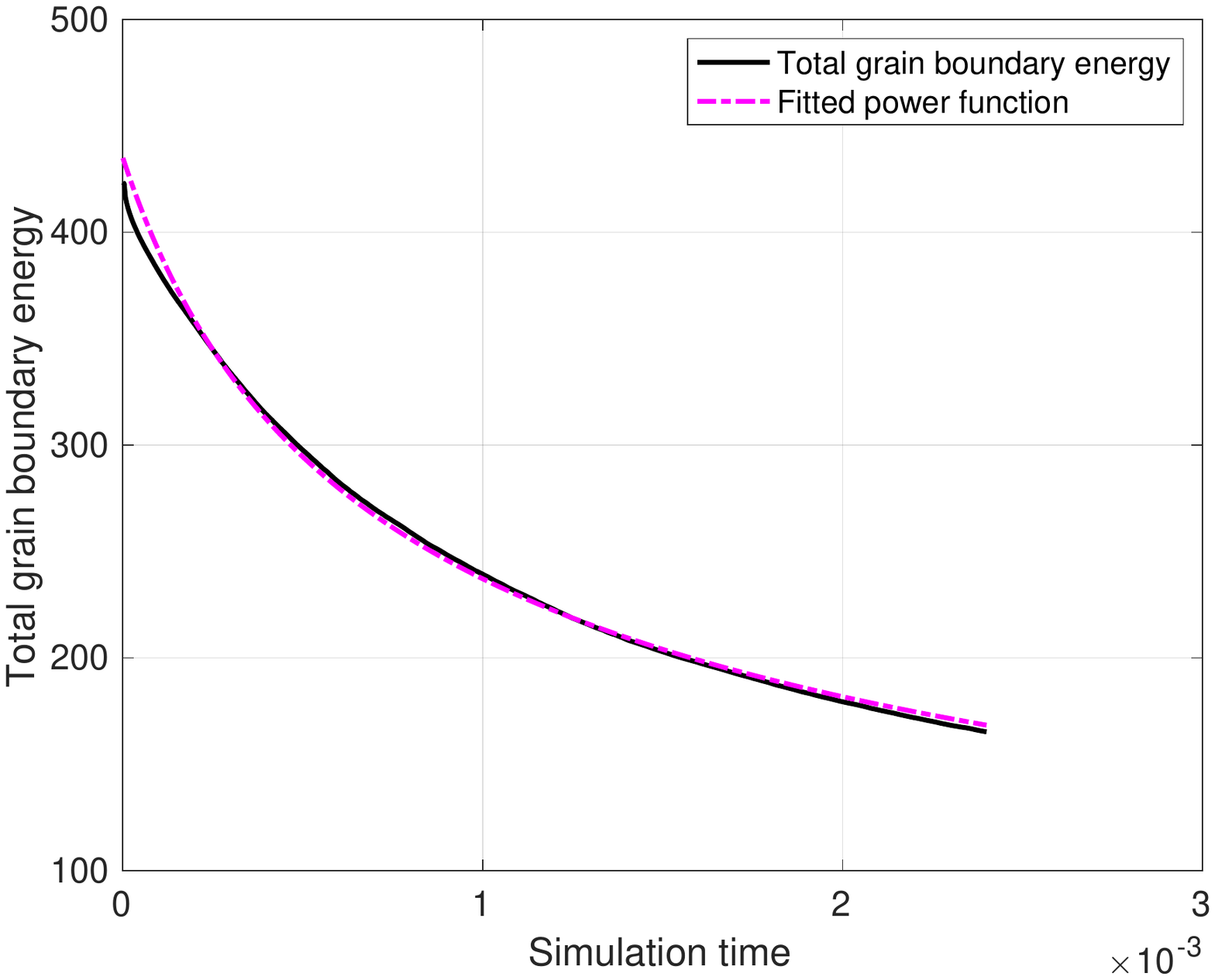}
\vspace{-2.cm}
%\end{tabular}
\caption{\footnotesize One run of $2$D trial with $10000$ initial
  grains: {\it (a) Left plot,} Total grain boundary energy plot
  (solid black) versus fitted  sum of exponential decaying functions
  $y(t)=263.5\exp(-202.3t)+150.3\exp(-1859t)$ (dashed blue); {\it (b) Right
    plot}, Total grain boundary energy plot
  (solid black) versus fitted power law decaying function
  $y_1(t)=435.3778(1.0+2369.2t)^{-0.5}$ (dashed magenta). Herring Condition is
imposed at the triple junctions $\eta\to \infty$
 and the misorientation parameter $\gamma=1$.}\label{fig14a}
\end{figure}

\begin{figure}[hbtp]
\centering
%\begin{tabular}{cc}
%(a) & (b)\\
\vspace{-2.cm}
\includegraphics[width=3.1in]{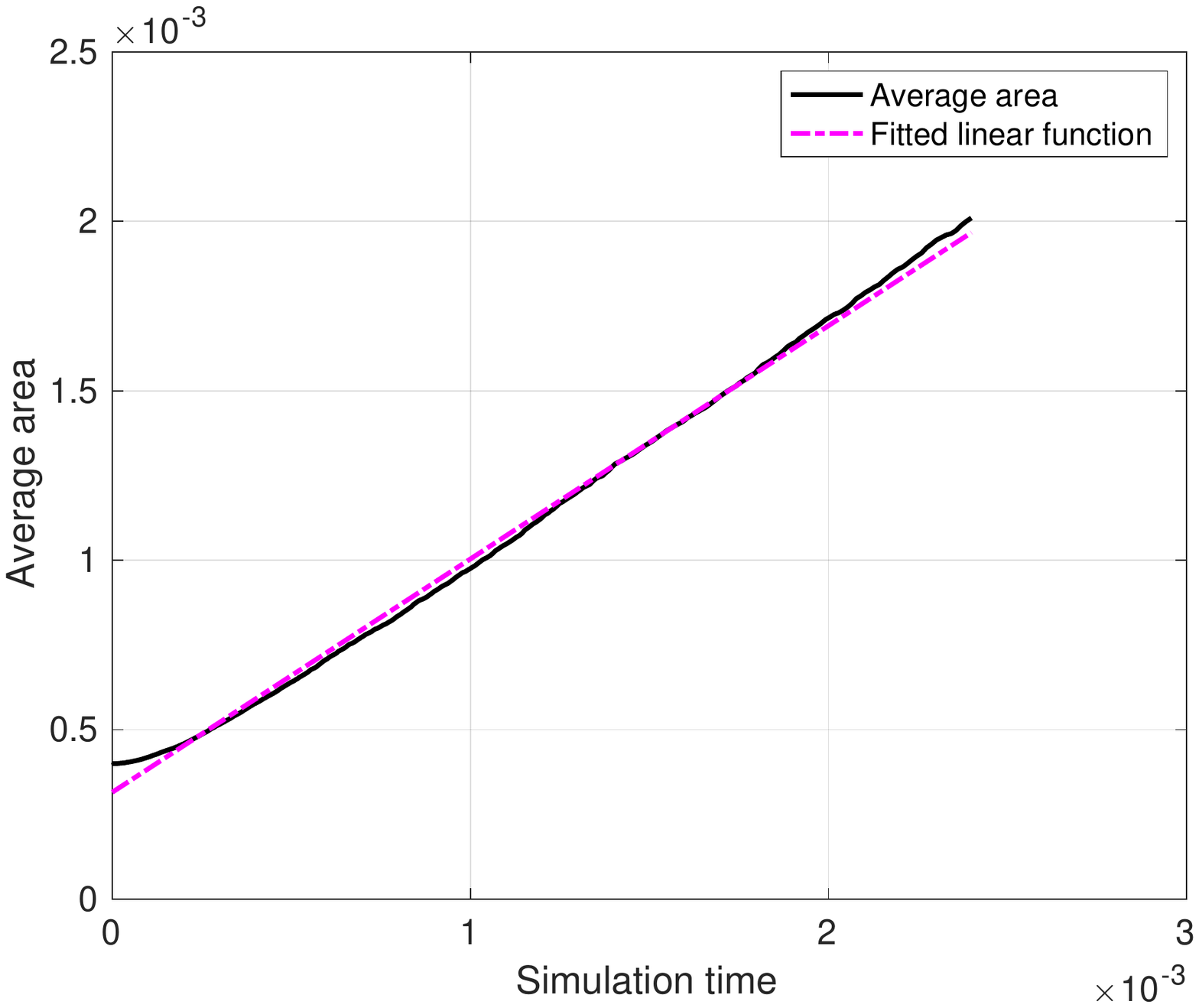}
\includegraphics[width=3.1in]{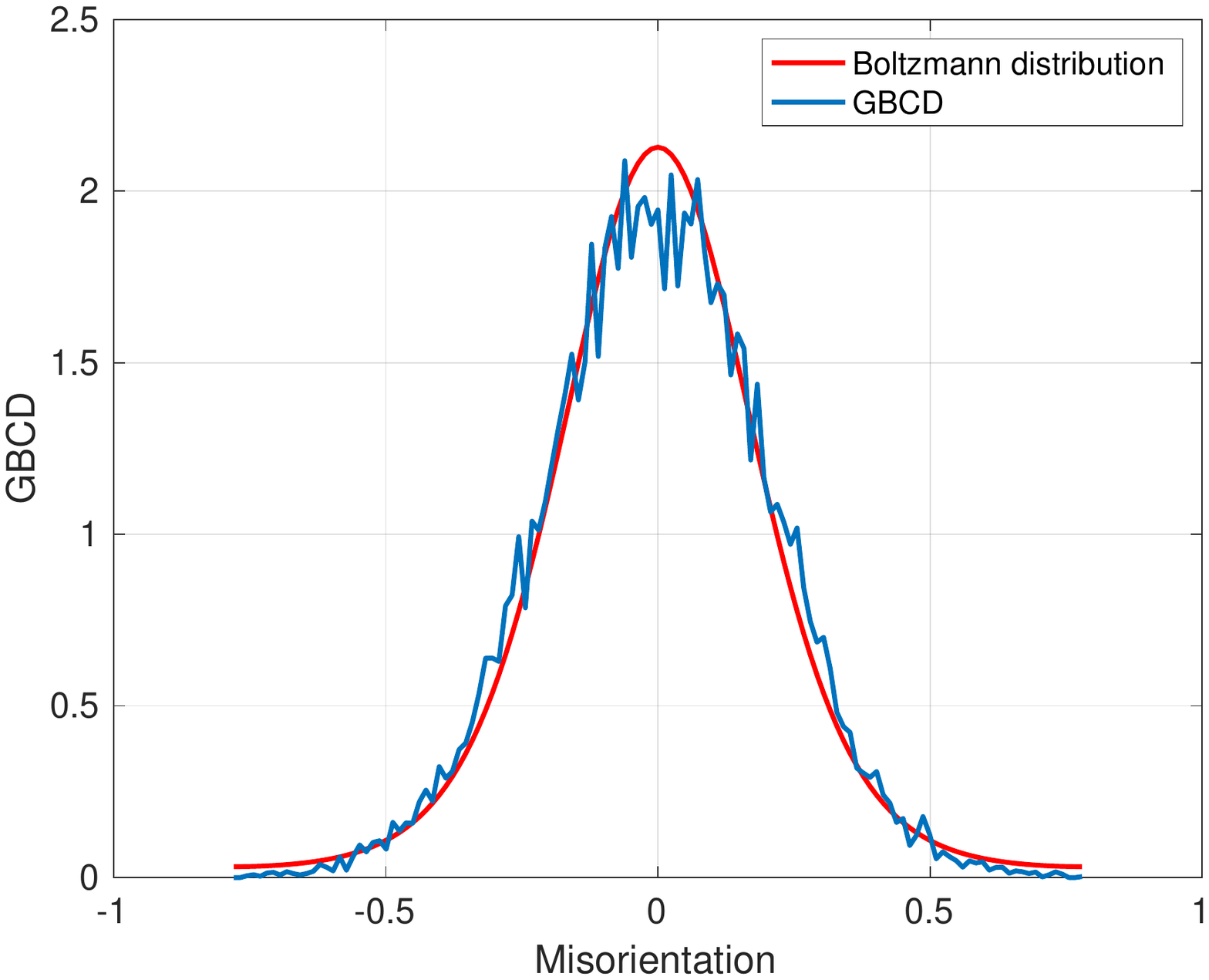}
\vspace{-2.cm}
%\end{tabular}
\caption{\footnotesize {\it (a) Left plot,} One run of $2$D trial with $10000$ initial
  grains: Growth of the average area of the
  grains (solid black) versus fitted  linear function
  $y(t)=0.6884t+0.0003154$ (dashed magenta).  
%The growth of the average area is
%consistent with the energy decay, Fig.~\ref{fig8a};
 {\it (b) Right plot},
  steady-state GBCD (blue curve) averaged over 3 runs of $2$D trials with $10000$ initial
  grains versus Boltzmann distribution with ``temperature''-
$D\approx 0.0594$ (red curve).  Herring Condition is
imposed at the triple junctions $\eta\to \infty$
 and the misorientation parameter $\gamma=1$.}\label{fig15a}
\end{figure}

\begin{figure}[hbtp]
\centering
%\begin{tabular}{cc}
%(a) & (b)\\
\vspace{-2.cm}
\includegraphics[width=3.1in]{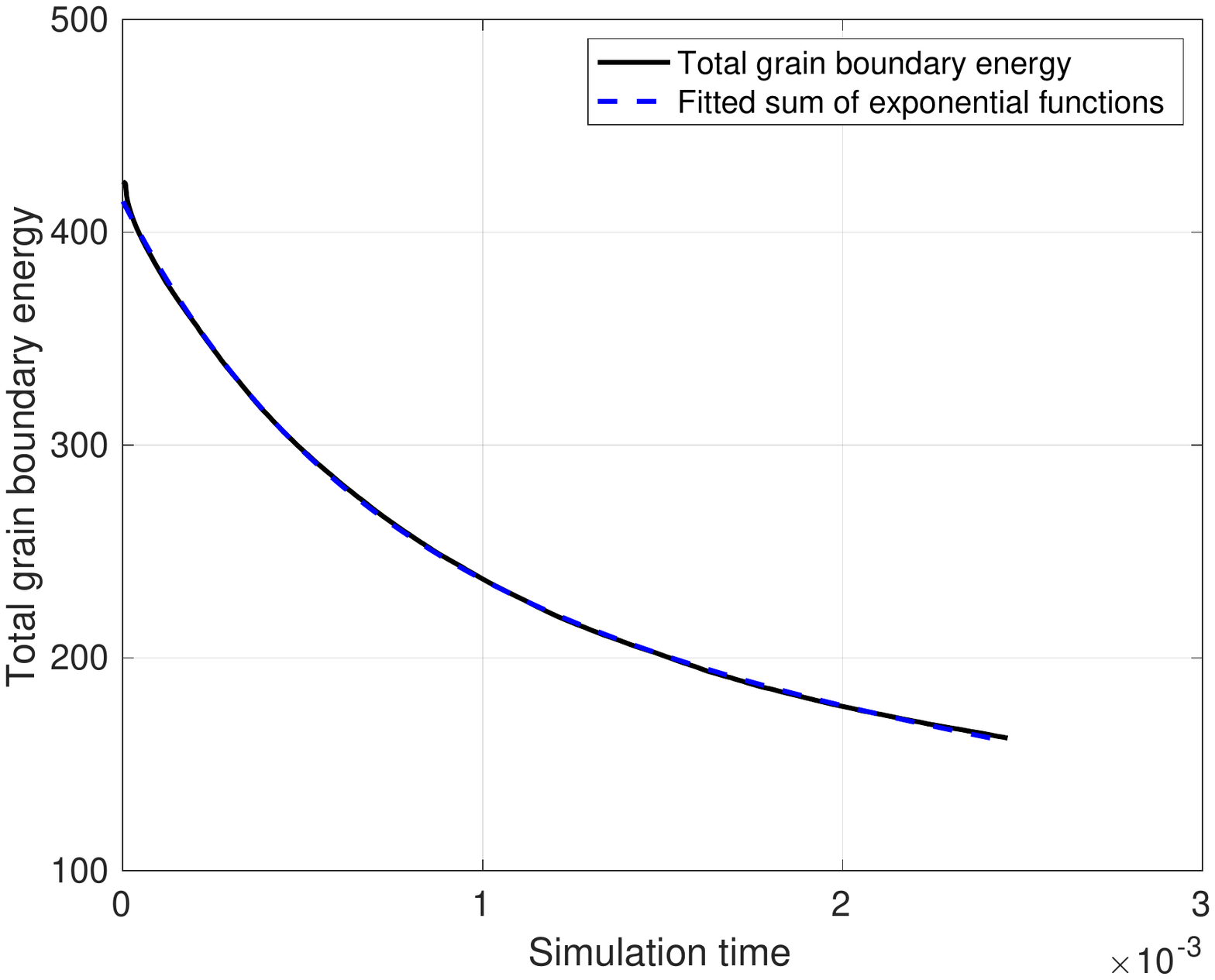}
\includegraphics[width=3.1in]{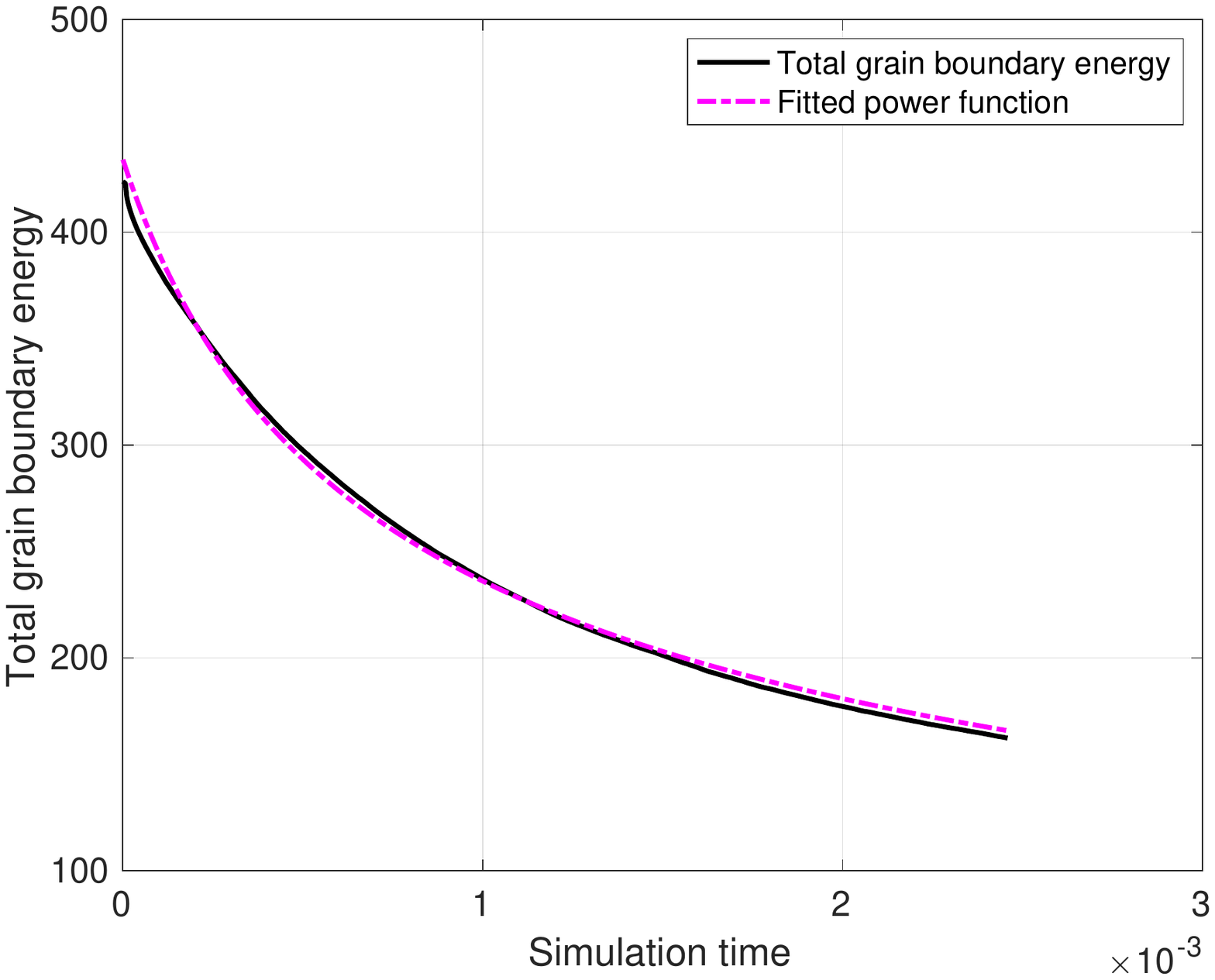}
\vspace{-2.cm}
%\end{tabular}
\caption{\footnotesize One run of $2$D trial with $10000$ initial
  grains: {\it (a) Left plot,} Total grain boundary energy plot
  (solid black) versus fitted  sum of exponential decaying functions
  $y(t)=248\exp(-182.8t)+166.8\exp(-1708t)$ (dashed blue); {\it (b) Right
    plot}, Total grain boundary energy plot
  (solid black) versus fitted power law decaying function
  $y_1(t)=434.6254(1.0+2388.1t)^{-0.5}$ (dashed magenta). Herring Condition is
imposed at the triple junctions $\eta\to \infty$
 and the misorientation parameter $\gamma=1000$.}\label{fig16a}
\end{figure}

\begin{figure}[hbtp]
\centering
%\begin{tabular}{cc}
%(a) & (b)\\
\vspace{-2.cm}
\includegraphics[width=3.1in]{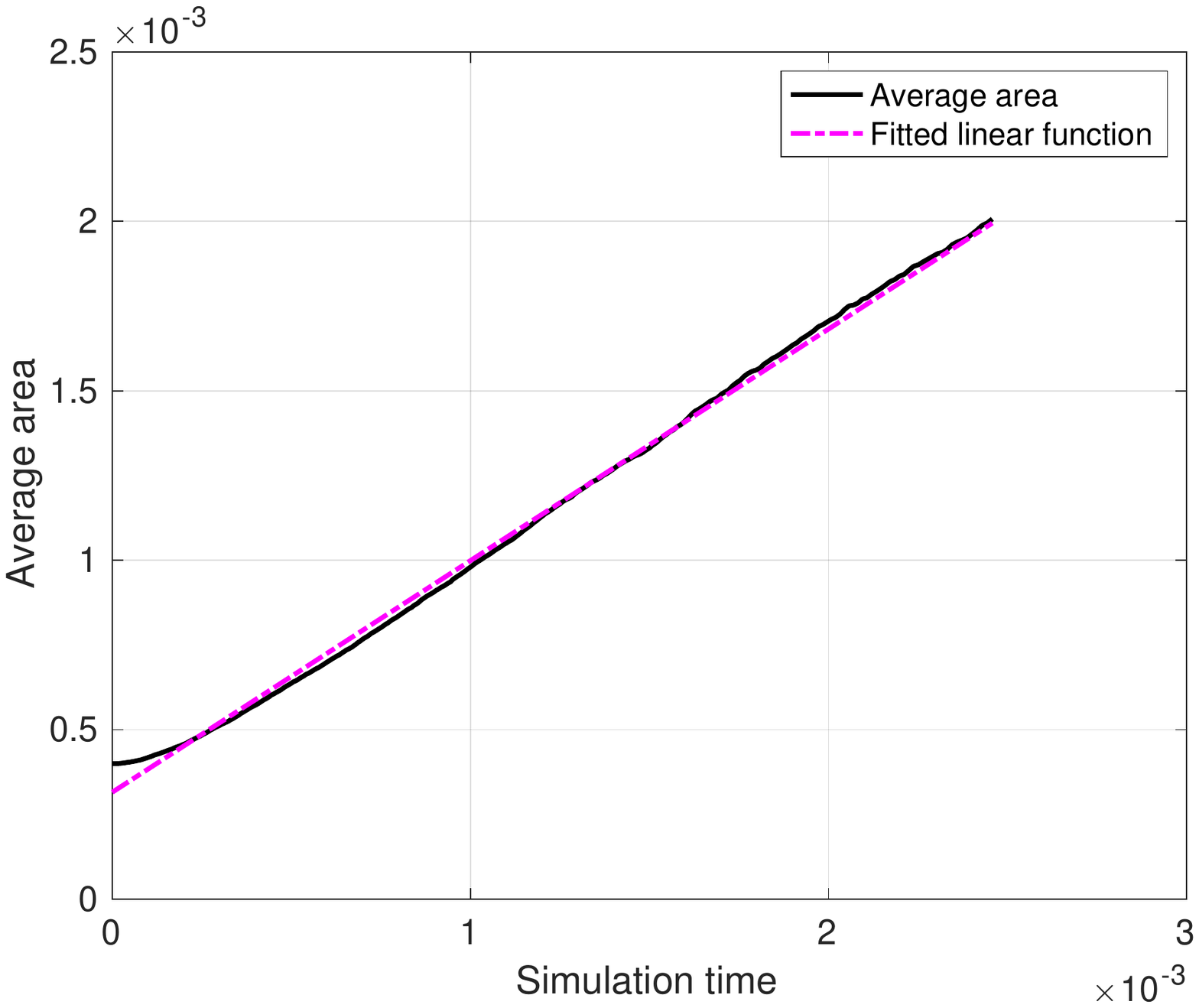}
\includegraphics[width=3.1in]{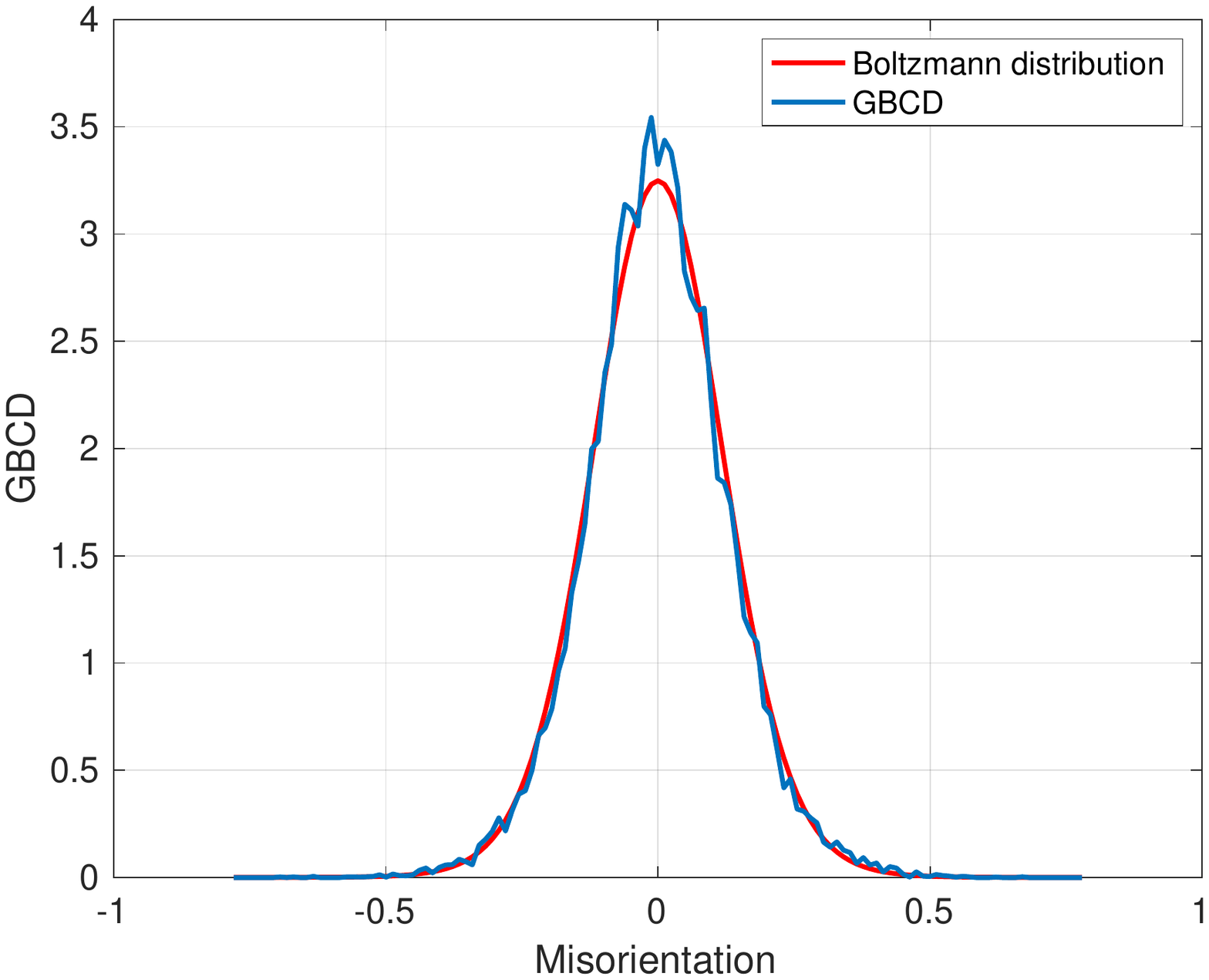}
\vspace{-2.cm}
%\end{tabular}
\caption{\footnotesize {\it (a) Left plot,} One run of $2$D trial with $10000$ initial
  grains: Growth of the average area of the
  grains (solid black) versus fitted  linear function
  $y(t)=0.6834t+0.0003155$ (dashed magenta).  
%The growth of the average area is
%consistent with the energy decay, Fig.~\ref{fig8a};
 {\it (b) Right plot},
  steady-state GBCD (blue curve) averaged over 3 runs of $2$D trials with $10000$ initial
  grains versus Boltzmann distribution with ``temperature''-
$D\approx 0.0283$ (red curve).  Herring Condition is
imposed at the triple junctions $\eta\to \infty$
 and the misorientation parameter $\gamma=1000$.}\label{fig17a}
\end{figure}
\par For the other series of tests, we impose Herring condition at the
triple junctions ($\eta \to \infty$), but we vary the misorientation
parameter $\gamma$, second equation of (\ref{eq:6.4}). We do not
observe as much effect on the energy decay or average area growth in this
case (probably due to the effect of the Herring condition at the triple junctions), but we observe the significant effect on the steady-state GBCD
and the diffusion coefficient/``temperature''-like parameter $D$,
see Figures \ref{fig12a}-\ref{fig17a}.  As concluded from our numerical
results, larger values of $\gamma$ give smaller diffusion
coefficient/''temperature''-like parameter $D$, and hence higher GBCD
peak near misorientation $0$. This is consistent with our theory that
basically, larger misorientation parameter $\gamma$ produces direct motion of misorientations
towards equilibrium state of zero misorientations, see Section
\ref{sec:1} and also \cite{Katya-Chun-Mzn}. Furthermore, from all numerical experiments with dynamic
misorientation and with different triple junction mobilities, we
observe that the steady-state GBCD is well-approximated by the Boltzmann
distribution for the grain boundary energy density see Figures
\ref{fig9a}, \ref{fig11a}, \ref{fig13a}, \ref{fig15a} and
\ref{fig17a} (right plots), which is similar to the
work in \cite{DK:BEEEKT,DK:gbphysrev,
  MR2772123, MR3729587,barmak_grain_2013}, but more detailed analysis
needs to be done for a system that undergoes critical events to
understand the relation between GBCD, ``temperature''-like/diffusion
parameter $D$, and different
relaxation time scales, as well as the effect of the time scales
on the dissipation mechanism and certain coarsening rates.
\begin{remark}
Note that, we performed $3$ runs for each numerical test
presented in this work. We report results of a single run for the energy decay
and the growth of the average area (the results from the other two
runs for each test were very similar to the presented ones),  and we
illustrate  averaged
over the $3$ runs the steady-state GBCD statistics. The curve-fitting
 for the energy and the average area plots was done using Matlab (\cite{Matlab}) toolbox cftool.
\end{remark}
 
\begin{remark}
\par Note, that the proposed model of dynamic orientations (\ref{eq:6.4}) (and, hence,
dynamic misorientations), or Langevin type
equation if critical events/disappearance events are taken into account) is reminiscent of the recently developed
theory for the grain boundary character distribution (GBCD)
\cite{DK:BEEEKT,DK:gbphysrev,MR2772123,MR3729587},  which suggests that the
evolution of the GBCD satisfies a Fokker-Planck equation. More
details will be presented in future studies.
\end{remark}

\section*{Acknowledgments}\label{sec:Ack}
The authors are grateful to David Kinderlehrer for the fruitful
discussions, inspiration and motivation of the work.  The authors are
also grateful to the anonymous referees for their valuable remarks and
questions, which led to significant improvement of the manuscript. Yekaterina
Epshteyn and Masashi Mizuno acknowledge partial support of Simons
Foundation Grant No. 415673, Yekaterina Epshteyn also acknowledges
partial support of NSF DMS-1905463, Masashi Mizuno
also acknowledges partial support of JSPS KAKENHI Grant No. 18K13446, Chun Liu acknowledges partial support of
NSF DMS-1759535 and NSF DMS-1759536.
Yekaterina Epshteyn would like to thank Nihon University for the hospitality
during her visit and Masashi Mizuno also would like to thank Penn State University, Illinois
Institute of Technology and the University of Utah for the hospitality
during his visits.

%
%
%\section{Final remark(TBD)}
%
%
\appendix

\section{Explicit form of the decay rate of the linearized problem
 \eqref{eq:5.7}}
\label{sec:A}

In this appendix, we give an explicit form of the constant
$\lambda_2$, which is a minimum eigenvalue of
\begin{equation*}
 L_{\vec{a}}
 =
 \sum_{j=1}^3
 \frac{1}{|\vec{b}_\infty^{(j)}|}
 \left(
  I
  -
  \frac{\vec{b}_\infty^{(j)}}{|\vec{b}_\infty^{(j)}|}
  \otimes
  \frac{\vec{b}_\infty^{(j)}}{|\vec{b}_\infty^{(j)}|}
 \right).
\end{equation*}
Since $L_{\vec{a}}$ is 2 dimensional matrix, it is enough to manipulate
the trace and the determinant of $L_{\vec{a}}$. The trace of
$L_{\vec{a}}$ is easily calculated as
\begin{equation*}
 \tr L_{\vec{a}}
  =
  \sum_{j=1}^3
  \frac{1}{|\vec{b}_\infty^{(j)}|}
  \left(
   2
   -
   \tr
   \frac{\vec{b}_\infty^{(j)}}{|\vec{b}_\infty^{(j)}|}
   \otimes
   \frac{\vec{b}_\infty^{(j)}}{|\vec{b}_\infty^{(j)}|}
  \right)
  =
  \sum_{j=1}^3
  \frac{1}{|\vec{b}_\infty^{(j)}|}.
\end{equation*}
Next we consider the determinant of $L_{\vec{a}}$. Denote
$\vec{b}_\infty^{(j)}=(b_{\infty,1}^{(j)},b_{\infty,2}^{(j)})$,
$b_k^{(j)}:=\frac{b_{\infty,k}^{(j)}}{b_\infty^{(j)}}$,
and $b_{\infty}^{(j)}=|\vec{b}_\infty^{(j)}|$. Then,
\[
 L_{\vec{a}}
 =
 \sum_{j=1}^3
 \begin{pmatrix}
  \frac{1}{b_\infty^{(j)}}
  \left(
  1-\left(b_1^{(j)}\right)^2
  \right)
  &
  -\frac{1}{b_\infty^{(j)}}
  b_1^{(j)}b_2^{(j)}  \\
  -\frac{1}{b_\infty^{(j)}}
  b_1^{(j)}b_2^{(j)}
  &
  \frac{1}{b_\infty^{(j)}}
  \left(
  1-\left(b_2^{(j)}\right)^2
  \right)
 \end{pmatrix}
\]
hence,
\begin{equation*}
 \begin{split}
  \det L_{\vec{a}}
  &=
  \left(
  \sum_{j=1}^3
  \frac{1}{b_\infty^{(j)}}
  \left(
  1-\left(b_{1}^{(j)}\right)^2
  \right)
  \right)
  \left(
  \sum_{j=1}^3
  \frac{1}{b_\infty^{(j)}}
  \left(
  1-\left(b_{2}^{(j)}\right)^2
  \right)
  \right)
  -
  \left(
  \sum_{j=1}^3
  \frac{1}{b_\infty^{(j)}}
  b_{1}^{(j)}
  b_{2}^{(j)}
  \right)^2 \\
  &=
  \sum_{j=1}^3
  \frac{1}{(b_\infty^{(j)})^2}
  \left(
  \left(
  1-\left(b_{1}^{(j)}\right)^2
  \right)
  \left(
  1-\left(b_{2}^{(j)}\right)^2
  \right)
  -
  \left(
  b_{1}^{(j)}
  b_{2}^{(j)}
  \right)^2
  \right) \\
  &\quad
  +
  \sum_{j\neq k}
  \frac{1}{b_\infty^{(j)}}
  \frac{1}{b_\infty^{(k)}}
  \left(
  \left(
  1-\left(b_{1}^{(j)}\right)^2
  \right)
  \left(
  1-\left(b_{2}^{(k)}\right)^2
  \right)
  -
  \left(
  b_{1}^{(j)}
  b_{2}^{(j)}
  \right)
  \left(
  b_{1}^{(k)}
  b_{2}^{(k)}
  \right)
  \right) \\
  &=
  \sum_{j\neq k}
  \frac{1}{b_\infty^{(j)}}
  \frac{1}{b_\infty^{(k)}}
  \left(
  \left(
  1-\left(b_{1}^{(j)}\right)^2
  \right)
  \left(
  1-\left(b_{2}^{(k)}\right)^2
  \right)
  -
  \left(
  b_{1}^{(j)}
  b_{2}^{(j)}
  \right)
  \left(
  b_{1}^{(k)}
  b_{2}^{(k)}
  \right)
  \right) \\
  &=
  \sum_{j<k}
  \frac{1}{b_\infty^{(j)}}
  \frac{1}{b_\infty^{(k)}}
  \biggl(
  1-\left(b_{1}^{(j)}\right)^2-\left(b_{2}^{(k)}\right)^2
  +\left(b_{1}^{(j)}\right)^2\left(b_{2}^{(k)}\right)^2 \\
  &\qquad
  +1-\left(b_{1}^{(k)}\right)^2-\left(b_{2}^{(j)}\right)^2
  +\left(b_{1}^{(k)}\right)^2\left(b_{2}^{(j)}\right)^2
  -2
  \left(
  b_{1}^{(j)}
  b_{2}^{(j)}
  b_{1}^{(k)}
  b_{2}^{(k)}
  \right)
  \biggr) \\
  &=
  \sum_{j<k}
  \frac{1}{b_\infty^{(j)}}
  \frac{1}{b_\infty^{(k)}}
  \left(
  b_1^{(j)}b_2^{(k)}-b_2^{(j)}b_1^{(k)}
  \right)^2
  =
  \sum_{j<k}
  \frac{1}{b_\infty^{(j)}}
  \frac{1}{b_\infty^{(k)}}
  \left(
  \frac{\vec{b}_\infty^{(j)}}{|\vec{b}_\infty^{(j)}|}
  \cdot
  R_{-\frac\pi2}\frac{\vec{b}_\infty^{(k)}}{|\vec{b}_\infty^{(k)}|}
  \right)^2,
 \end{split}
\end{equation*}
where $R_{-\frac\pi2}
=
\begin{pmatrix}
 0 & 1 \\
 -1 & 0
\end{pmatrix}$ is the $-\frac\pi2$ rotating matrix,  and note that
 $(b_1^{(j)})^2+(b_2^{(j)})^2=1$.  We also know that,
 $\left\{\frac{\vec{b}_\infty^{(k)}}{|\vec{b}_\infty^{(k)}|},\
 R_{-\frac\pi2}\frac{\vec{b}_\infty^{(k)}}{|\vec{b}_\infty^{(k)}|}\right\}$ is
 orthonormal basis on $\R^2$. Thus for $1\leq j,k\leq 3$, by Parseval's
 identity tells us
\begin{equation*}
  \left(
   \frac{\vec{b}_\infty^{(j)}}{|\vec{b}_\infty^{(j)}|} \cdot
   \frac{\vec{b}_\infty^{(k)}}{|\vec{b}_\infty^{(k)}|}
  \right)^2
  +
  \left(
   \frac{\vec{b}_\infty^{(j)}}{|\vec{b}_\infty^{(j)}|} \cdot
   R_{-\frac\pi2}\frac{\vec{b}_\infty^{(k)}}{|\vec{b}_\infty^{(k)}|}
  \right)^2=1.
\end{equation*}
Since,
\begin{equation*}
 \frac{\vec{b}_\infty^{(j)}}{|\vec{b}_\infty^{(j)}|}
  \cdot
  \frac{\vec{b}_\infty^{(k)}}{|\vec{b}_\infty^{(k)}|}
  =
  -\frac12+\frac32\delta_{jk},
\end{equation*}
we finally arrive, for $j\neq k$
\begin{equation*}
 \left(
  \frac{\vec{b}_\infty^{(j)}}{|\vec{b}_\infty^{(j)}|}
  \cdot
  R_{-\frac\pi2}\frac{\vec{b}_\infty^{(k)}}{|\vec{b}_\infty^{(k)}|}
 \right)^2
 =
 \frac{3}{4}.
\end{equation*}
Then $\lambda_2>0$ in Lemma \ref{lem:5.3} is explicitly given by,
\begin{equation}
\label{eq:A.1}
 \begin{split}
  \lambda_2
  &=
  \frac12
  \left(
  \tr L_{\vec{a}}
  -
  \sqrt{(\tr L_{\vec{a}})^2
  -4(\det L_{\vec{a}})}
  \right) \\
  &=
  \frac12
  \left(
  \sum_{j=1}^3\frac{1}{|\vec{b}_\infty^{(j)}|}
  -
  \sqrt{
  \left(\sum_{j=1}^3\frac{1}{|\vec{b}_\infty^{(j)}|}\right)^2
  -3\left(
  \sum_{j<k}
  \frac{1}{|\vec{b}_\infty^{(j)}|}
  \frac{1}{|\vec{b}_\infty^{(k)}|}
  \right)
  }
  \right) \\
  &=
  \frac12
  \left(
  \sum_{j=1}^3\frac{1}{|\vec{b}_\infty^{(j)}|}
  -
  \sqrt{
  \frac12
  \sum_{j<k}
  \left(
  \frac{1}{|\vec{b}_\infty^{(j)}|}
  -
  \frac{1}{|\vec{b}_\infty^{(k)}|}
  \right)^2
  }
  \right).
 \end{split}
\end{equation}
%
%\section*{Acknowledgments}


\begin{thebibliography}{99}
 \bibitem{MR3316603} H.~Abels, H.~Garcke,L.~M\"uller, \emph{Stability of
	 spherical caps under the volume-preserving mean curvature flow
	 with line tension}, Nonlinear Anal. \textbf{117}(2015), 8--37.

\bibitem{MR3729587}
Patrick Bardsley, Katayun Barmak, Eva Eggeling, Yekaterina Epshteyn, David
  Kinderlehrer, and Shlomo Ta'asan.
\newblock {\em Towards a gradient flow for microstructure.}
\newblock  Atti Accad. Naz. Lincei Rend. Lincei Mat. Appl.,
  28(4):777--805, 2017.

\bibitem{DK:BEEEKT}
K.~Barmak, E.~Eggeling, M.~Emelianenko, Y.~Epshteyn, D.~Kinderlehrer, and
  S.~Ta'asan.
\newblock  {\em Geometric growth and character development in large metastable
  networks.}
\newblock { Rend. Mat. Appl. (7)}, 29(1):65--81, 2009.

\bibitem{DK:gbphysrev}
K.~Barmak, E.~Eggeling, M.~Emelianenko, Y.~Epshteyn, D.~Kinderlehrer, R.~Sharp,
  and S.~Ta'asan.
\newblock  {\em Critical events, entropy, and the grain boundary character
  distribution.}
\newblock { Phys. Rev. B}, 83:134117, Apr 2011.

\bibitem{MR2772123}
	  K.~Barmak, E.~Eggeling,M.~Emelianenko,
	  Y.~Epshteyn, D.~Kinderlehrer,
	  R.~Sharp, S.~Ta'asan,
	  \emph{An entropy based theory of the grain boundary character
	  distribution},
	  Discrete Contin. Dyn. Syst. \textbf{30}(2011),
	  427--454.

\bibitem{barmak_grain_2013}
Katayun Barmak, Eva Eggeling, David Kinderlehrer, Richard Sharp, Shlomo
  Ta'asan, Anthony~D. Rollett, and Kevin~R. Coffey.
\newblock  {\em Grain {Growth} and the {Puzzle} of its {Stagnation} in {Thin}
  {Films}: {The} {Curious} {Tale} of a {Tail} and an {Ear}.}
\newblock  Prog. Mater. Sci., 58:987--1055, 2013.

 \bibitem{MR1677397}
	 V.~Boltyanski, H.~Martini, V.~Soltan,
	 \emph{Geometric methods and optimization problems},
	 Combinatorial Optimization, vol.~4, Kluwer Academic
	 Publishers, Dordrecht, 1999.

 \bibitem{MR0485012}
	 K.~ A.~Brakke,
	 \emph{The motion of a surface by its mean curvature},
	 Princeton University Press,
	 1978.

 \bibitem{MR1240580}
	 L.~Bronsard, F.~Reitich,
	 \emph{On three-phase boundary motion and the singular limit of
	 a vector-valued {G}inzburg-{L}andau equation},
	 Arch. Rational Mech. Anal. \textbf{124}(1993),
	 355--379.

 \bibitem{MR1100211}
	 Y.~G.~Chen, Y.~Giga, S.~Goto,
	 \emph{Uniqueness and existence of viscosity solutions of
	 generalized mean curvature flow equations},
	 J. Differential Geom. \textbf{33}(1991),
	 749--786.

 \bibitem{MR2024995}
	 K.~Ecker,
	 \emph{Regularity theory for mean curvature flow},
	 Progress in Nonlinear Differential Equations and
	 their Applications \textbf{57},
	 Birkh\"auser, 2004.

\bibitem{MR3787390}
Matt Elsey and Selim Esedo\={g}lu.
\newblock  {\em Threshold dynamics for anisotropic surface energies.}
\newblock { Math. Comp.}, 87(312):1721--1756, 2018.

\bibitem{MR2573343}
Matt Elsey, Selim Esedo{\=g}lu, and Peter Smereka.
\newblock  {\em Diffusion generated motion for grain growth in two and three
  dimensions.}
\newblock { J. Comput. Phys.}, 228(21):8015--8033, 2009.

\bibitem{MR2748098}
Matt Elsey, Selim Esedo{\=g}lu, and Peter Smereka.
\newblock  {\em Large-scale simulation of normal grain growth via diffusion-generated
  motion.}
\newblock { Proc. R. Soc. Lond. Ser. A Math. Phys. Eng. Sci.},
  467(2126):381--401, 2011.

 \bibitem{Katya-Chun-Mzn}
	 Y.~Epshteyn, C.~Liu, M.~Mizuno,
	 {\em  Motion of grain boundaries with dynamic lattice misorientations
	 and with triple junctions drag,} submitted, 2019, https://arxiv.org/abs/1903.11512.

 \bibitem{MR1100206}
	 L.~C.~Evans, J.~Spruck,
	 \emph{Motion of level sets by mean curvature. {I}},
	 J. Differential Geom. \textbf{33}(1991),
	 635--681.

 \bibitem{MR2561958}
	 H.~Garcke, Y.~Kohsaka, D.~\v{S}ev\v{c}ovi\v{c},
	 \emph{Nonlinear stability of stationary solutions for
	 curvature flow with triple function}
	 Hokkaido Math. J. \textbf{38}(2009), 721--769.


 \bibitem{doi:10.1007-978-3-642-59938-5_2}
	 C. Herring,
	 \emph{Surface tension as a motivation for sintering},
	 Fundamental Contributions to the Continuum Theory of Evolving
	 Phase Interfaces in Solids,
	 Springer, 1999,33--69.

 \bibitem{MR3612327}
	 L.~Kim, Y.~Tonegawa,
	 \emph{{On the mean curvature flow of grain boundaries}},
	 Ann. Inst. Fourier (Grenoble) \textbf{67}(2017),
	 43--142.

 \bibitem{MR1833000}
	 D.~Kinderlehrer, C.~Liu,
	 \emph{Evolution of grain boundaries},
	 Math. Models Methods Appl. Sci. \textbf{11}(2001),
	 713--729.

 \bibitem{MR2272185}
	 D.~Kinderlehrer, I.~Livshits, S.~Ta'asan,
	 \emph{A variational approach to modeling and simulation of
	 grain growth},
	 SIAM J. Sci. Comput. \textbf{28}(2006),
	 1694--1715.

\bibitem{BobKohn}
Robert~V. Kohn.
\newblock  {\em Irreversibility and the statistics of grain boundaries.}
\newblock { Physics, 4:33}, Apr 2011.

 \bibitem{MR3495423}
	 A.~Magni, C.~Mantegazza, M.~Novaga,
	 \emph{Motion by curvature of planar networks, {II}}
	 Ann. Sc. Norm. Super. Pisa Cl. Sci. (5) \textbf{15}(2016),
	 117--144.

 \bibitem{MR2815949}
	 C.~Mantegazza,
	 \emph{Lecture notes on mean curvature flow},
	 Progress in Mathematics, \textbf{290},
	 Birkh\"auser, 2011.

	 
 \bibitem{MR2076269}
	 C.~Mantegazza,
	 \emph{Evolution by curvature of networks of curves in the
	 plane}, in
	 Variational problems in {R}iemannian geometry,
	 Progr. Nonlinear Differential Equations Appl.
	 \textbf{59}(2014),
	 95--109.

 \bibitem{MR3565976}
	 C.~Mantegazza, M.~Novaga, A.~Pluda,
	 \emph{Motion by curvature of networks with two triple
	 junctions},
	 Geom. Flows \textbf{2}(2016),
	 18--48.

 \bibitem{arXiv:1611.08254}
	 C.~Mantegazza, M.~Novaga, A.~Pluda, F.~Schulze
	 \emph{Evolution of networks with multiple junctions},
	 preprint, arXiv:1611.08254.


 \bibitem{MR2075985}
	 C.~Mantegazza, M.~Novaga, V.~M.~Tortorelli,
	 \emph{Motion by curvature of planar networks},
	 Ann. Sc. Norm. Super. Pisa Cl. Sci. (5) \textbf{3}(2004),
	 235--324.

\bibitem{Matlab}
\emph{MATLAB. version 9.4.0 (R2018a).} The MathWorks Inc., Natick,
  Massachusetts, 2018.

 \bibitem{doi:10.1063-1.1722511}
	 W.~W.~Mullins,
	 \emph{Two-Dimensional Motion of Idealized Grain Boundaries},
	 J. Appl. Phy. \text{27}(1956),
	 900--904.


 \bibitem{doi:10.1063-1.1722742}
	 W.~W.~Mullins,
	 \emph{Theory of thermal grooving},
	 J. Appl. Phy. \text{28}(1957),
	 333--339.

\bibitem{Thomas2019PNAS}
	 S.~L.~Thomas, C.~Wei, J.~Han, Y.~Xiang, and
	 D.~J.~ Srolovitz,
	 \emph{Disconnection description of triple-junction motion},
	 Proc. Natl. Acad. Sci. USA
	 \textbf{116}(2019),
	 8756--8765.

 \bibitem{doi:10.1023-A:1008781611991}
	 M.~Upmanyu, D.~J.~Srolovitz, L.~S.~Shvindlerman, G.~Gottstein,
	 \emph{Triple junction mobility: A molecular dynamics study}
	 Interface Science \textbf{7}(1999),
	 307--319.

 \bibitem{doi:10.1016-S1359-6454(01)00446-3}
	 M.~Upmanyu, D.~J.~Srolovitz, L.~S.~Shvindlerman, G.~Gottstein,
	 \emph{Molecular dynamics simulation of triple junction
	 migration},
	 Acta materialia \textbf{50}(2002),
	 1405--1420.

\bibitem{Zhang2017PhysRevLett}
	 L.~Zhang, J.~Han, Y.~Xiang, and D.~J.~Srolovitz, 
	 \emph{Equation of Motion for a Grain Boundary},
	 Phys. Rev. Lett. \textbf{119}(2017), 
	 246101.

 \bibitem{Zhang2018JMPS}
	 L.~Zhang and Y.~Xiang,
	 \emph{Motion of grain boundaries incorporating dislocation
	 structure},
	 J. Mech. Phys. Solids
	 \textbf{117}(2018), 157--178.

\end{thebibliography}
\end{document}